\newcommand{\jbrac}[1]{\left\langle {#1}\right\rangle}
\newcommand{\norm}[1]{\left\Vert {#1}\right\Vert}
\newcommand{\pare}[1]{\left({#1}\right)}
\newcommand{\sqbrac}[1]{\left[ {#1}\right]}
\newcommand{\brac}[1]{\left\{{#1}\right\}}
\newcommand{\R}{\mathbb{R}}
\newcommand{\D}{\mathbb{D}}
\newcommand{\E}{\mathbb{E}}
\newcommand{\Z}{\mathcal{Z}}
\newcommand{\bQ}{\mathbf{Q}}
\newenvironment{bracket}{\left\{\begin{aligned}}{\end{aligned}\right.}
\newtheorem{theorem}{Theorem}[section]
\newtheorem{proposition}[theorem]{Proposition}
\newtheorem{lemma}[theorem]{Lemma}
\newtheorem{corollary}[theorem]{Corollary}
\theoremstyle{definition}
\theoremstyle{remark}
\newtheorem{remark}[theorem]{Remark}
\title{Optimal mass normalizability for Gibbs measure associated with NLS on the 2D disc}
\author{Tianhao Xian}
\date{}
\begin{document}
	\maketitle
	
	\begin{abstract}
		We prove the normalizability of Gibbs measure associated with radial focusing nonlinear Schr\"{o}dinger equation (NLS) on the 2-dimensional disc $\D$, at critical mass threshold. The result completes the study of optimal mass normalizability on $\D$ by Oh-Sosoe-Tolomeo (2021).
	\end{abstract}
	
	\section{Introduction}
	In this paper, we complete Oh-Sosoe-Tolomeo's study \cite{oh2021optimal} of optimal normalizability threshold for the Gibbs measure for the focusing nonlinear Schr\"{o}dinger equation (NLS), extending their result to the setting of $L^2_{rad}(\D)$, the square integrable radial functions on unit disc.
	
	The focusing nonlinear Schr\"{o}dinger equation
	\begin{equation}
		\begin{bracket}
			i\partial_t u(t,x) + \Delta u(t,x) + |u(t,x)|^{p-2}u(t,x) = 0,\\
			u(0,\cdot) = u_0
		\end{bracket}
	\end{equation}
	is an evolution equation corresponding to the Hamiltonian
	\begin{equation*}
		H(u) = \frac{1}{2}\int_\D |\nabla u|^2 - \frac{1}{p}\int_\D |u|^{p}.
	\end{equation*}
	The Gibbs measure is a probability on  function space formally defined as a weighted Lebesgue measure
	\begin{equation} \label{def: formal gibbs measure}
		d\rho = \Z^{-1}e^{-H(u)}du,
	\end{equation}
	where $\Z$ is a normalization constant known as the partition function. The conservation of the Hamiltonian $H$ (which holds for smooth data) suggests that $\rho$ should be invariant under the flow of NLS. Since Lebowitz-Rose-Speer's construction of the Gibbs measure \cite{lebowitz1988statistical}, there have been numerous studies of the invariance of Gibbs measure with respect to the flow of NLS. (In particular, this yields almost surely global well-posedness of NLS.) McKean \cite{mckean1995statistical} proved invariance of the Gibbs measure for NLS on $\mathbb{T}$. Meanwhile, Bourgain \cite{bourgain1994periodic} proved the same result with a more analytic method, combining the deterministic local well-posedness with the invariant measure for the truncated NLS. Later, he constructed an invariant Gibbs measure for a modified NLS equation on $\mathbb{T}^2$, where the local well-posedness used is probabilistic \cite{bourgain1996invariant}. Bourgain's method was then applied in other settings. Tzvetkov constructed an invariant measure for NLS on the unit disc in $\R^2$, with subcritical nonlinearity \cite{tzvetkov2006invariant}. This result was improved by Bourgain-Bulut to include the critical nonlinearity with small mass initial data \cite{bourgain2014almost1}. The restriction on the mass was imposed due to the lack of an optimal normalization result of the Gibbs measure for focusing NLS, which is the goal of this paper. For the defocusing NLS, the Gibbs measure can be constructed without restriction on the mass. Tzvetkov proved invariance of the defocusing sub-quintic NLS \cite{tzvetkov2008invariant} and Bourgain-Bulut for defocusing cubic NLS on 3$d$ unit ball \cite{bourgain2014almost2}.
	
	Since a translation invariant measure on infinite dimensional space cannot be locally finite, the definition of the Gibbs measure in (\ref{def: formal gibbs measure}) is formal. To rigorously construct the Gibbs measure $\rho$, let $\brac{e_n}$ be the orthonormal basis of $L^2_{rad}(\D)$ consisting of eigenfunctions of $-\Delta$, with corresponding eigenvalues $\lambda_n$. Let $\{g_n\}_{n\geq 1}$ be a sequence of independent standard complex-valued Gaussian\footnote{That is, $\Im g_n$ and $\Re g_n$ are independent with law $\mathcal{N}(0,1/2)$.}. Define the free Gaussian measure $\mu$ on $L^2_{rad}(\D)$ as the law of random variable
	\begin{equation}\label{def: u series}
		u(\omega) = \sum_n \frac{g_n(\omega)}{\sqrt{\lambda_n}}e_n.
	\end{equation}
	In other words, $\mu$ is the Gaussian measure on $L^2_{rad}(\D)$ with mean $0$ and covariance $(-\Delta)^{-1}$.\footnote{$(-\Delta)^{-1}$ is the solution map of the Poisson equation $-\Delta u = f$, $f\in L^2_{rad}(\D)$, with Dirichlet boundary conditions.}
	
	Writing $u = \sum_n u_n e_n$, then
	\begin{equation*}
		d\mu(u) = \prod_n \frac{1}{2\pi\lambda_n}e^{-\frac{1}{2}|u_n|^2} d\Re u_n d\Im u_n =``\Z_0^{-1}e^{-\frac{1}{2}\int_\D |\nabla u|^2} du".
	\end{equation*}
	The Gibbs measure $\rho$ is then defined as a weighted $\mu$-measure with density $e^{\frac{1}{p+1}\int|u|^{p+1}}$. Unfortunately, this density is not $\mu$-integrable \cite{lebowitz1988statistical}. A remedy is to restrict the measure to a smaller set expected to be invariant under the equation. In \cite{oh2010invariant}, Oh-Quastel constructed a Gibbs measure on $\mathbb{T}$, conditioning on fixed mass $\int |u|^2$ and momentum $\int iu\bar{u}_x$. A more commonly used method is $L^2$-truncation: define
	\begin{equation}\label{def: gibbs measure}
		d\rho = d\rho_{K,p} = \Z_{K,p}^{-1} \mathbbm{1}_{\norm{u}_{L^2}\leq K}e^{\frac{1}{p+1}\int|u|^{p+1}}d\mu.
	\end{equation}
	To guarantee $\rho_{K,p}$ is indeed a probability measure, we need to show the density is $\mu$-integrable, or equivalently, the partition function $\mathcal{Z}_{k,p}$ is finite.
	
	The study of the integrability of $\rho_{K,p}$ was initiated by Lebowitz-Rose-Speer \cite{lebowitz1988statistical}, where they considered the Gibbs measure on $L^2(\mathbb{T})$. Previous results in the torus setting \cite{lebowitz1988statistical, bourgain1994periodic, oh2021optimal} are summarized as following
	\begin{theorem}[Focusing Gibbs measure on torus]
		For $p\geq 2$, $K>0$, the partition function of Gibbs measure for focusing NLS,
		\begin{equation*}
			\Z_{K,p} = \E_{\mu}\sqbrac{e^{\frac{1}{p}\int_{\mathbb{T}}|u|^{p} dx},\norm{u}_{L^2(\mathbb{T})}\leq K},
		\end{equation*}
		satisfies
		\begin{enumerate}[(a)]
			\item(Subcritical) For $p<6$, $\Z_{K,p} < \infty$.
			\item(Supercritical) For $p>6$, $\Z_{K,p} = \infty$.
			\item(Critical) For $p=6$, $\Z_{K,p} < \infty$ if and only if $K\leq \norm{Q}_{L^2(\R)}$, where $Q$ is the ground state solution (positive, radial, decreasing to 0 at infinity) of equation
			\begin{equation*}
				-Q'' + Q - Q^5 = 0.
			\end{equation*}
		\end{enumerate}
	\end{theorem}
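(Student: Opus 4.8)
\emph{Reduction and the role of Gagliardo--Nirenberg.} All three cases are governed by the sharp one--dimensional Gagliardo--Nirenberg--Sobolev (GNS) inequality. For $Q$ the ground state of $-Q''+Q-Q^5=0$, multiplying the equation by $Q$ and by $xQ'$ and integrating yields the Pohozaev relations $\norm{Q'}_{L^2}^2=\tfrac12\norm{Q}_{L^2}^2$ and $\norm{Q}_{L^6}^6=\tfrac32\norm{Q}_{L^2}^2$, whence
\[
\norm{f}_{L^6(\R)}^6\le \frac{3}{\norm{Q}_{L^2(\R)}^4}\,\norm{f}_{L^2}^4\,\norm{f'}_{L^2}^2 ,
\]
with equality exactly on rescalings, translates and phase rotations of $Q$; the analogue on $\mathbb T$ holds up to lower order terms, and for $p<6$ one has $\norm{f}_{L^p}^p\lesssim \norm{f}_{L^2}^{(p+2)/2}\norm{f'}_{L^2}^{(p-2)/2}$. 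In particular, on $\brac{\norm{u}_{L^2}\le\norm{Q}_{L^2}}$ the focusing weight is dominated by the free Gaussian weight, $\tfrac16\norm{u}_{L^6}^6\le\tfrac12\norm{\nabla u}_{L^2}^2$, and this domination has \emph{no} slack exactly at $\norm{u}_{L^2}=\norm{Q}_{L^2}$. The plan is then to pass to a tail estimate through the layer--cake formula
\[
\Z_{K,p}=\mu\pare{\norm{u}_{L^2}\le K}+\int_0^\infty e^{\lambda}\,\mu\pare{\tfrac1p\norm{u}_{L^p}^p>\lambda,\ \norm{u}_{L^2}\le K}\,d\lambda ,
\]
so that $\Z_{K,p}<\infty$ is equivalent to this probability decaying integrably against $e^\lambda$, while $\Z_{K,p}=\infty$ follows from exhibiting an event on which it does not.

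\emph{Upper bounds: parts (a), and (c) when $K<\norm{Q}_{L^2}$.} Split $u=\Pi_{\le N}u+\Pi_{>N}u$ at a frequency $N=N(\lambda)\to\infty$ and use $\norm{u}_{L^p}\le\norm{\Pi_{\le N}u}_{L^p}+\norm{\Pi_{>N}u}_{L^p}$. The tail $\Pi_{>N}u$ is a Gaussian element of $L^p$ with $\E\norm{\Pi_{>N}u}_{L^p}\lesssim N^{-1/2}$ and weak variance $\lesssim N^{-1}$, so by Borell--TIS it exceeds a fixed fraction of $\lambda^{1/p}$ only with probability $\le e^{-cN\lambda^{2/p}}$, which is $o(e^{-\lambda})$ once $N\gg\lambda^{1-2/p}$. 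For the low--frequency part, on $\brac{\norm{\Pi_{\le N}u}_{L^2}\le K}$ the sharp GNS inequality bounds $\norm{\Pi_{\le N}u}_{L^p}^p$ by a $K$--dependent constant (for $p=6$ equal to $3K^4/\norm{Q}_{L^2}^4$) times a power of $\norm{\nabla\Pi_{\le N}u}_{L^2}^2$, and the latter is comparable to a sum of order $N$ independent exponential random variables, hence has large--deviation upper tail $e^{-(1-o(1))\,m/2}$ for $m\gg N$. Taking $N(\lambda)=\lambda^{1-2/p+\eta}$ (admissible since $1-2/p<4/(p-2)$ for $p<6$) gives a stretched--exponential bound $e^{-c\lambda^{1+\eta'}}$ for $p<6$, and for $p=6$ with $K<\norm{Q}_{L^2}$ the bound $e^{-(1-o(1))(\norm{Q}_{L^2}/K)^4\lambda}$; in both cases the layer--cake integral converges.

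\emph{Lower bounds: part (b), and (c) when $K>\norm{Q}_{L^2}$.} Test $\Z_{K,p}$ against a thin tube about a concentrating profile. Let $h_\Lambda=A\phi_\Lambda$ where $\phi_\Lambda$ is an $L^2$--normalized bump at scale $\Lambda^{-1}$ if $p>6$, and $\phi_\Lambda=Q_\Lambda:=\Lambda^{1/2}Q(\Lambda\,\cdot)$ if $p=6$, with $A$ a fixed number below $K/\norm{Q}_{L^2}$ (any $A\in(0,K)$ will do when $p>6$), so that $\norm{u}_{L^2}\le K$ on a small ball $B=\brac{\norm{v}_{L^2}\le\delta,\ \norm{v}_{L^p}\le R}$ about $0$. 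By the Cameron--Martin formula $\mu(h_\Lambda+B)\ge e^{-\frac12\norm{h_\Lambda}_{H^1}^2}\mu(B)$ with $\mu(B)>0$ fixed (the cut $\norm{v}_{L^p}\le R$ is harmless since $\norm{h_\Lambda}_{L^p}\to\infty$), while on $h_\Lambda+B$ the density is $\ge\exp\pare{\tfrac1p\norm{h_\Lambda}_{L^p}^p(1-o(1))}$, so
\[
\Z_{K,p}\ \gtrsim\ \mu(B)\,\exp\pare{\tfrac1p\norm{h_\Lambda}_{L^p}^p(1-o(1))-\tfrac12\norm{h_\Lambda}_{H^1}^2}.
\]
For $p>6$ the gain $\norm{h_\Lambda}_{L^p}^p\asymp A^p\Lambda^{p/2-1}$ overwhelms the cost $\norm{h_\Lambda}_{H^1}^2\asymp A^2\Lambda^2$ as $\Lambda\to\infty$; for $p=6$, using $\norm{\nabla Q}_{L^2}^2=\tfrac13\norm{Q}_{L^6}^6$, the exponent equals $\tfrac16\Lambda^2\norm{Q}_{L^6}^6A^2(A^4-1)+O(1)$, which diverges to $+\infty$ precisely when $A>1$, i.e.\ as soon as $K>\norm{Q}_{L^2}$. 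Sending $\Lambda\to\infty$ forces $\Z_{K,p}=\infty$.

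\emph{The endpoint $p=6$, $K=\norm{Q}_{L^2}$, and the main obstacle.} Here the argument above yields only $\mu\pare{\tfrac16\norm{u}_{L^6}^6>\lambda,\ \norm{u}_{L^2}\le\norm{Q}_{L^2}}\lesssim e^{-(1-o(1))\lambda}$, sitting exactly at the threshold of non--integrability against $e^\lambda$ — this is the true difficulty and the part, due to Oh--Sosoe--Tolomeo, that the present paper transfers to $\D$. To gain the missing room one must exploit that GNS saturates only near the finite--dimensional manifold $\mathcal M=\brac{e^{i\theta}Q_\Lambda(\cdot-x_0)}$: a stable (quantitative) GNS inequality bounds $\norm{\nabla\Pi_{\le N}u}_{L^2}^2-\tfrac13\norm{\Pi_{\le N}u}_{L^6}^6$ from below by the squared distance of $\Pi_{\le N}u$ to $\mathcal M$, after which one shows that a low--frequency Gaussian comes within the required distance of a ground state concentrated at the scale forced by $\norm{u}_{L^6}^6\approx 6\lambda$ only on an event of probability $e^{-c\lambda^{1+\varepsilon}}$, which beats $e^{-\lambda}$ decisively. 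Carrying out this quantitative scheme together with the frequency truncation — and, on $\D$, replacing the scaling and translation symmetries of $\R$ by the geometry of the ball, so that only concentration at interior points and on the boundary survives — is where I expect the essential work to lie.
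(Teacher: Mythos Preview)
The paper does not prove this theorem: it is stated as a summary of known results, with parts (a) and (b) attributed to Lebowitz--Rose--Speer and Bourgain, and part (c) --- including the endpoint $K=\norm{Q}_{L^2}$ --- to Oh--Sosoe--Tolomeo. So there is no ``paper's own proof'' to compare against; the paper's new contribution is the analogous endpoint on the disc (Theorem~1.2). Your sketches for (a), (b), and the off-endpoint parts of (c) are in line with the Bourgain-style arguments in the cited literature and are reasonable outlines.

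Where you diverge from what is actually done is the endpoint $p=6$, $K=\norm{Q}_{L^2}$. You propose a quantitative/stable GNS inequality plus a direct small-probability bound, $e^{-c\lambda^{1+\varepsilon}}$, on the event that the low-frequency Gaussian falls near the soliton manifold. The Oh--Sosoe--Tolomeo argument (and its adaptation in this paper) does not proceed this way. Near the soliton manifold one performs a \emph{normal bundle decomposition} $u=e^{i\theta}Q_\delta+v$ together with a change-of-variables formula that rewrites the partition function as a double integral over $(\theta,\delta)\in\mathcal M$ and over the normal fiber $V_{\theta,\delta}$, weighted by the free Gaussian on that fiber. The Hamiltonian is expanded about $Q_\delta$; the linear term is absorbed using the mass constraint, the higher-order terms are handled as in the away-from-soliton region, and the decisive gain comes from the \emph{quadratic} part, controlled by spectral analysis of a compact operator (the second variation $d^2(H-\lambda M)$ at $Q_\delta$). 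Its eigenvalues produce a factor $\prod_n(1+2\lambda_n)^{-1/2}\lesssim e^{-c\delta^{-1}}$ in the fiber integral, which is then integrable against the surface measure $\sim\delta^{-5}\,d\delta$. No stability form of GNS is invoked, and the source of the extra decay is the eigenvalue sum, not a probabilistic tail estimate of the type you describe. A small correction on your last remark: on $\D$ with radial data there is no translation symmetry and no boundary concentration --- the only degeneration along $\mathcal M$ is $\delta\to 0$ at the origin.
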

	In \cite{lebowitz1988statistical}, Lebowitz-Rose-Speer proved (a), (b). For the critical case $p=6$, they showed that $\Z_{K,6} < \infty$ if $K$ is small enough. Later Bourgain \cite{bourgain1994periodic} reproved their results using the series expression (\ref{def: u series}). Part (c) was completed by Oh-Sosoe-Tolomeo \cite{oh2021optimal}.
	
	Similar methods apply to the $L^2_{rad}(\D)$ setting, where the critical nonlinearity is $p=4$. In \cite{tzvetkov2006invariant}, Tzvetkov constructed the measure and proved invariance for the subcritical case $p<4$. Later, Bourgain-Bulut \cite{bourgain2014almost1} obtained a proof  for the critical case with small $L^2$ cutoff $K$. The optimal cutoff for $K$ is $\norm{Q}_{L^2(\R^2)}$, where $Q$ is the ground state solution of the equation
	\begin{equation*}
		-\Delta Q + Q - Q^3 = 0.
	\end{equation*}
	Oh-Sosoe-Tolomeo \cite{oh2021optimal} proved integrability for $K<\norm{Q}_{L^2}$ and non-integrability for $K>\norm{Q}_{L^2}$. The goal of this paper is to complete the remaining case $K = \norm{Q}_{L^2}$.
	\begin{theorem}[Main result]\label{thm: main result}
		The partition function for the Gibbs measure (\ref{def: gibbs measure}) at the critical threshold $p=4, K = \norm{Q}_{L^2(\R^2)}$ is finite. More precisely,
		\begin{equation*}
			\mathcal{Z} := \E_\mu \sqbrac{e^{\frac{1}{p}\int_{\D}|u|^{p} dx},\norm{u}_{L^2(\D)}\leq \norm{Q}_{L^2(\R^2)}}<\infty.
		\end{equation*}
	\end{theorem}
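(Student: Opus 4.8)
The plan is to reduce the claim to a bound on the Fourier--Bessel-truncated partition functions that is uniform in the truncation level, and then to close that bound by combining the sharp Weinstein inequality with a stability analysis near the ground-state manifold. For the reduction: let $\pi_N$ be the orthogonal projection of $L^2_{rad}(\D)$ onto $\mathrm{span}\brac{e_1,\dots,e_N}$, put $u_N=\pi_N u$, and set
\[
\Z_N:=\E_\mu\sqbrac{\,e^{\frac14\int_\D|u_N|^4\,dx}\ ;\ \norm{u_N}_{L^2(\D)}\le\norm{Q}_{L^2(\R^2)}\,}.
\]
Since $u_N\to u$ in $L^4(\D)$ almost surely (an elementary second-moment computation using Wick's formula, passing to a subsequence if needed) while $\norm{u_N}_{L^2}\le\norm{u}_{L^2}$, Fatou's lemma gives $\Z\le\liminf_N\Z_N$, so it suffices to prove $\sup_N\Z_N<\infty$. (One could equivalently run the argument through the Bou\'{e}--Dupuis variational formula, as Oh--Sosoe--Tolomeo \cite{oh2021optimal} do on $\mathbb T$; on $\D$ the truncated form is more convenient because the Fourier--Bessel modes carry no convolution structure.) The key inequality is the sharp Weinstein bound $\norm{f}_{L^4(\D)}^4\le\frac{2}{\norm{Q}_{L^2(\R^2)}^2}\norm{f}_{L^2(\D)}^2\norm{\nabla f}_{L^2(\D)}^2$ for $f\in H^1_0(\D)$, which on the cutoff event yields the \emph{exact} comparison $\tfrac14\int_\D|u_N|^4\le\tfrac12\norm{\nabla u_N}_{L^2(\D)}^2=\tfrac12\sum_{n\le N}|g_n|^2$. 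This is precisely the difficulty: the bound is saturated, and $\E_\mu e^{\frac12\sum_{n\le N}|g_n|^2}=2^N\to\infty$, so one must improve on it quantitatively, and the improvement can only come from the fact that equality in Weinstein's inequality is not attained on $\D$.

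I would extract this improvement from a quantitative stability (Bianchi--Egnell type) estimate: there are $c,C>0$ such that for $f\in H^1_0(\D)$ with $\norm{f}_{L^2(\D)}\le\norm{Q}_{L^2(\R^2)}$,
\[
\tfrac12\norm{\nabla f}_{L^2(\D)}^2-\tfrac14\int_\D|f|^4\ \ge\ c\ \mathrm{dist}_{L^2}\!\pare{\tfrac{f}{\norm{\nabla f}_{L^2(\D)}},\,\mathcal M}^2\,\norm{\nabla f}_{L^2(\D)}^2\ +\ c\,e^{-C\norm{\nabla f}_{L^2(\D)}},
\]
where $\mathcal M=\brac{e^{i\theta}\mu Q(\mu\,\cdot):\theta\in\R,\ \mu>0}$ is the ground-state manifold: the first term comes from a profile decomposition for near-optimizers of Weinstein's inequality, and the second, nondegenerate term is the genuinely new gain on the bounded domain, reflecting that the exponentially decaying profile $Q$ fails the Dirichlet condition at $\partial\D$. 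A complementary feature of the unit disc is that the mass cutoff has slack on typical fields, $\E_\mu\norm{u}_{L^2(\D)}^2=\sum_n j_{0,n}^{-2}=\tfrac14\ll\norm{Q}_{L^2(\R^2)}^2$, whereas profiles in $\mathcal M$ carry mass exactly $\norm{Q}_{L^2}^2$; thus soliton-like configurations are doubly atypical, in shape and in mass. On the event that $u_N$ stays a fixed small scale-invariant distance from $\mathcal M$, the first term in the stability estimate gives $\tfrac14\int|u_N|^4\le(\tfrac12-c)\sum_{n\le N}|g_n|^2$, and a layer-cake decomposition in $\norm{\nabla u_N}_{L^2}^2$, together with Gaussian concentration of $\sum_{n\le N}|g_n|^2$ and of $\norm{u_N}_{L^2}^2$ about their means, bounds this contribution uniformly in $N$; this part should follow the subcritical estimates of \cite{oh2021optimal} with only routine changes.

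The main obstacle is the near-soliton regime. When $u_N$ lies within a small scale-invariant distance of $\mathcal M$, one has $u_N\approx e^{i\theta}\mu Q(\mu\,\cdot)$ with $\mu^2\sim\norm{\nabla u_N}_{L^2}^2/\norm{Q}_{L^2}^2$, the left side of the stability estimate is essentially zero, and $\tfrac14\int|u_N|^4$ is forced up to nearly $\tfrac12\mu^2\norm{Q}_{L^2}^2$, making the weight $e^{\frac14\int|u_N|^4}$ as large as $e^{\frac12\mu^2\norm{Q}_{L^2}^2}$. One must then show such configurations are rare enough to absorb this: only scales $\mu\lesssim N$ are visible after truncation, and the Gaussian probability that $u_N$ falls in a small neighbourhood of the two-dimensional set $\mathcal M$ at scale $\mu$ is estimated by a tube-volume computation around $\mathcal M$, weighted by the Gaussian density (of order $e^{-\mu^2\norm{Q}_{L^2}^2}$ on that shell) and cut down further by the large-deviation cost of the mass constraint. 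The exponential factors must then beat $e^{\frac12\mu^2\norm{Q}_{L^2}^2}$ after summation over $\mu\lesssim N$, and the delicate point — where I expect essentially all of the work to lie — is to control the polynomial-in-$N$ factors produced by the tube computation uniformly in the truncation level; the boundary gain $e^{-C\norm{\nabla u_N}_{L^2}}$ from the stability estimate provides the margin in the intermediate range $\mu\sim\sqrt N$, where the gradient is of typical size and the bare tube count is too weak. Combining the two regimes yields $\sup_N\Z_N<\infty$, hence $\Z<\infty$; this parallels the endpoint argument of Oh--Sosoe--Tolomeo on $\mathbb T$, with Fourier--Bessel analysis in place of Fourier series.
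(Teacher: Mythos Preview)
Your far-from-soliton piece is in the right spirit and matches the paper's Sections~3--4: a quantitative gap in the Gagliardo--Nirenberg inequality, obtained there by a profile-decomposition/compactness argument (Lemma~4.1), reduces the bound to a subcritical Gaussian computation.

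The near-soliton argument, however, has a genuine gap. The Gaussian ``density'' at $e^{i\theta}Q_\delta$ and the weight $e^{\frac14\int|Q_\delta|^4}$ cancel \emph{exactly}: formally the Gibbs density is $e^{-H}$, and $H_{\R^2}(Q_\delta)=0$, so along the soliton manifold the integrand is $O(1)$, not exponentially small. Concretely, $\frac14\|Q_\delta\|_{L^4}^4=\frac12\|\nabla Q_\delta\|_{L^2}^2=\frac12\delta^{-2}\|Q\|_{L^2}^2$ (using $\|\nabla Q\|_{L^2}=\|Q\|_{L^2}$ from Pohozaev), so your claimed density $e^{-\mu^2\|Q\|_{L^2}^2}$ should be $e^{-\frac12\mu^2\|Q\|_{L^2}^2}$, and there is no exponential margin left over. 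The mass constraint is not an additional large-deviation cost either: $\|Q_\delta\|_{L^2}=\|Q\|_{L^2}$ exactly, and whatever atypicality there is in reaching that mass is already encoded in the Cameron--Martin factor you have used. Finally, your boundary gain $c\,e^{-C\|\nabla f\|_{L^2}}$ is a lower bound on $H(f)$, so it enters the estimate as $e^{\frac14\int|f|^4}\le e^{\frac12\|\nabla f\|^2-c\,e^{-C\|\nabla f\|}}$; subtracting a doubly-exponentially small quantity from the exponent is useless. After the cancellation you are left integrating an $O(1)$ density over the soliton manifold with surface measure $\sim\delta^{-5}\,d\delta$ (the Jacobian of the normal-bundle chart), which diverges at $\delta\to0$.

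What the paper does to close this is quite different from a tube-volume count: it writes the fiber integral over the normal space $V_{\theta,\delta}$ as a Gaussian expectation of $e^{B_\delta(v)+\text{h.o.t.}}$, identifies $B_\delta$ (after removing the linear part via the $L^2$ cutoff, Lemma~6.2) with a quadratic form $\jbrac{Aw,w}_{\dot H^1}$ where $A=(-\Delta)^{-1}(\text{const}\cdot\bQ_\delta^2-\frac12\delta^{-2})$ restricted to the normal fiber, and then carries out a spectral analysis of $A$ (Propositions~6.4--6.5). The positive eigenvalues satisfy $\lambda_n^+\gtrsim\delta^{-2}/n^2$, so the diagonalized Gaussian integral $\prod_n(1+2\lambda_n^+)^{-1/2}$ yields an honest $e^{-c\delta^{-1}}$, which beats the $\delta^{-5}$ Jacobian. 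This extra decay lives in the \emph{fluctuation integral around} the soliton, not in the density \emph{at} the soliton, and no pointwise stability inequality of the kind you wrote can produce it.
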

	
	\subsection{Strategy of the proof}
	For now on, we discuss the partition function  at critical nonlinearity and integrability threshold: $p=4, K = \norm{Q}_{L^2(\R^2)}$. Looking at the formal density of $\rho$ in (\ref{def: formal gibbs measure}), one sees that the key part of the proof is to bound the Hamiltonian $H$ from below. The following strategy was introduced by Oh-Sosoe-Tolomeo \cite{oh2021optimal}.
	
	The largest part of the density $e^{-H}$ occurs at minimum points of $H$, if they exist. Although $\mu$ is a measure on $L^2_{rad}(\D)$, by a density argument, we can strict our discussion to a dense subset $H_{rad,0}^1(\D)$, the set of radial $H^1(\D)$ functions vanishing on $\partial \D$. It turns out that, conditioning on $\brac{\norm{u}_{L^2(\D)}\leq \norm{Q}_{L^2(\R^2)}}$, the Hamiltonian $H$ has no minimum point on $H_{rad,0}^1(\D)$ (see remark \ref{remark: no minimizer on disc}, also \cite{oh2021optimal}). On the other hand, $H_{rad,0}^1(\D)$ embeds naturally into $H^1_{rad}(\R^2)$ and $H$ has the corresponding extension
	\begin{equation*}
		H_{\R^2}(u):=\frac{1}{2}\int_{\R^2} |\nabla u|^2 - \frac{1}{4}\int_{\R^2} |u|^4.
	\end{equation*}
	The minima of the latter exist under the restriction $\brac{\norm{u}_{L^2(\R^2)}\leq \norm{Q}_{L^2(\R^2)}}$, and the minimal points consists of dilations and phase rotations of the ground state solution $Q$, that is, $e^{i\theta}Q_\delta$, for $\theta \in \mathbb{T}$, $\delta>0$ and $Q_\delta = \delta^{-1}Q(\delta^{-1}\cdot)$. These minimizers form a 2-dimensional submanifold, the soliton manifold\footnote{The name comes from the soliton solution of focusing nonlinear Schr\"{o}dinger equation. See, e.g. \cite{weinstein1982nonlinear, Frank2014GROUNDSO, nakanishi2011invariant}} , $\mathcal{M}$. It is natural to divide our domain $H_{rad,0}^1(\D)$ into two parts: the $\epsilon$-neighborhood in $L^2$ of the soliton manifold $U_\epsilon$ and its complement.
	
	The extremal problem for $H_{\R^2}$ is closely related to the sharp Gagliardo-Nirenberg-Sobolev (GNS) inequality (see Proposition \ref{prop: GNS inequality}). Away from the soliton manifold $\mathcal{M}$, the GNS inequality is not saturated. As a corollary,
	\begin{equation*}
		H(u) \geq c \int_\D |\nabla u|^2 dx,
	\end{equation*}
	for some $c>0$. Heuristically\footnote{Of course, the restriction $\norm{u}_{L^2(\D)}\leq \norm{Q}_{L^2(\R^2)}$ is implemented in the expectation. We omit here and below for simplicity.},
	\begin{equation*}
		\E_\mu \sqbrac{e^{\frac{1}{4}\int_{\D}|u|^{p} dx}, U_\epsilon^C} \leq \int e^{-c\int_\D |\nabla u|^2}du <\infty.
	\end{equation*}
	The integral on the right is bounded (formally) because its integrand can be regarded the density of the mean-zero Gaussian measure with covariance $\Delta^{-1}/2c$.
	
	On the neighborhood $U_\epsilon$, the idea is to expand the Hamiltonian $H$ along the soliton manifold $\mathcal{M}$. First, $U_\epsilon$ can be treated as a normal bundle of $\mathcal{M}$ (with respect to the $H^1$ inner product). More precisely, for $u \in U_\epsilon$, there is a unique decomposition $u = e^{i\theta}Q_\delta +v$ with $v$ in $V_{\theta, \delta}$, the normal vector space at $e^{i\theta}Q_\delta$. Formally,
	\begin{equation*}
		\E_\mu \sqbrac{e^{\frac{1}{4}\int_{\D}|u|^{p} dx}, U_\epsilon} = \int_{U_\epsilon} e^{-H(u)} du.
	\end{equation*}
	A change of variable formula allows us to rewrite it as a double integral
	\begin{equation}\label{eqn: double integral}
		\int_{\mathcal{M}} \int_{V_{\theta,\delta}} e^{-H(e^{i\theta}Q_\delta +v)} dv d\sigma,
	\end{equation}
	for some surface measure $\sigma \ll d\theta d\delta$. Next, we expand the Hamiltonian as
	\begin{equation}
	\label{eqn: B-}
		H(e^{i\theta}Q_\delta +v) = H(e^{i\theta}Q_\delta) + \frac{1}{2}\int_{\D}|\nabla v|^2 + B_{\theta,\delta}(v) + \mbox{higher order terms}.
	\end{equation}
	By recognizing that $e^{-\frac{1}{2}\int|\nabla v|^2}dv$ is the free Gaussian on $V_{\theta, \delta}$, denoted as $\mu_{V_{\theta, \delta}}$, the double integral (\ref{eqn: double integral}) becomes
	\begin{equation*}
		\int_{\mathcal{M}}e^{-H(e^{i\theta}Q_\delta)} \pare{\int_{V_{\theta,\delta}} e^{-B_{\theta,\delta}(v) + \text{h.o.t.}} dv} d\sigma.
	\end{equation*}
	$H(e^{i\theta}Q_\delta) \approx H_{\R^2}(Q)$ is uniformly bounded, and the high order terms can be tamed similarly as on $U_\epsilon^C$.
	
	The term $B_{\theta,\delta}(v)$ in \eqref{eqn: B-} is bounded from below by a quadratic form $\jbrac{\mathcal{A}_{\theta,\delta} v, v}_{\dot{H}^1}$ (on $H^1_{rad,0}(\D)$). The analysis of this quadratic form is more involved than the in the one-dimensional case treated in \cite{oh2021optimal}. We will show that $\mathcal{A}_{\theta,\delta}$ is compact. Expressing the free Gaussian measure $\mu_{V_{\theta,\delta}}$ in the basis consisted of $\mathcal{A}_{\theta, \delta}$'s eigenfunctions, the integral of $e^{-B_{\theta,\delta}(v)}$ term can be diagonalized and bounded by
	\[\prod_n \pare{1+2\lambda_n}^{-1/2} \lesssim \exp\pare{-\sum_n \lambda_n},\]
	with $\lambda_n$ be $\mathcal{A}_{\theta, \delta}$'s eigenvalues. Therefore, we showed $e^{-B_{\theta,\delta}(v)}$ is integrable by proving an asymptotic lower bound for $\lambda_n$.
	
	\subsection{The outline of the paper}
	In Section 2, we introduce some notations and preliminaries. In Sections 3 and 4, we bound the integral defining the partition function away from the soliton manifold. Section 5 is devoted to the normal bundle decomposition of the neighborhood $U_\epsilon$ of the soliton, and a change of variables formula adapted to this decomposition. In Section 6, we conduct some spectral analysis to bound the integral of the quadratic part $e^{-B_{\theta,\delta}(v)}$. We finish the proof of our main theorem in Section 7.
	
	\section{Notation and preliminary}
	In this section, we summarize some of the notation we will use. $A \lesssim(\gtrsim) B$ means $A\leq(\geq) CB$ for some $C>0$; $A\sim B$ means $A \lesssim B$ and $A\gtrsim B$. We denote $A \ll B$ as $A \leq cB$ for some small $c$.
	
	In this paper, all function spaces consist of complex-valued functions. More precisely, for $\Omega=\R^2$ or $\D$, $L^2(\Omega)$ is equipped with inner product
	\begin{equation*}
		\jbrac{f,g}:= \Re \int_\Omega f(x)\overline{g(x)}dx.
	\end{equation*}
	The subscript $_{rad}$ indicates the subspace of radial functions, e.g. $L^2_{rad}(\D)$. We use $H^1_{rad,0}(\D)$ to denote the subspace of radial functions in $H^1(\D)$ which vanish on $\partial \D$.
	
	\subsection{Eigenfunctions and eigenvalues}
	
	On Hilbert space, consider $-\Delta$ as an operator with domain on $H^2_{rad}(\D)$ with Dirichlet boundary conditions. Using the radial variable $r = |x|$, the eigenvalue equation can be written as
	\begin{equation*}
		\begin{bracket}
			-(\partial_r^2+\frac{1}{r}\partial_r)e &= \lambda e,\\
			e'(0)=0,\; &e(1) = 0.
		\end{bracket}
	\end{equation*}
	The $L^2$-normalized eigenfunctions and eigenvalues are
	\begin{equation}\label{def: eigenfunction}
		e_n(r) = \frac{J_0(z_n r)}{\norm{J_0(z_n \cdot)}_{L^2(\D)}}, \quad \lambda_n=z_n^2,
	\end{equation}
	where $J_0$ is the Bessel function of order $0$. $z_n$ is its $n$th zero, with asymptotic behavior (see also Lemma 2.2 in \cite{tzvetkov2006invariant}) 
	\begin{equation}\label{est: roots of J_0}
		z_n = \pi(n-\frac{1}{4})+O(\frac{1}{n}).
	\end{equation}
	By Sturm–Liouville theory, $\brac{e_n}$ (more precisely, $\brac{e_n, ie_n}$) forms an orthonormal basis for $L_{rad}^2(\D)$.
	
	With the notation above, we obtain a series representation (\ref{def: u series}) of a random element $u$ distributed according to the free Gaussian measure $\mu$ is
	\begin{equation}\label{def: series u precise}
		u(\omega) = \sum_{n\geq 1}\frac{g_n(\omega)}{z_n}e_n.
	\end{equation}
	
	We can then define the dyadic\footnote{In this paper, the capital letter $N$ (and $M$) is always to denote some dyadic number $2^n$.} projection $P_{\leq N}$ on the $2N$-dimensional (real) subspace
	\begin{equation*}
		E_N:=\text{span}\brac{e_n: n\leq N},
	\end{equation*}
	and $P_N: = P_{\leq N}- P_{\leq N/2}$.
	
	\subsection{Bessel functions of order 0}\label{section: Bessel functions}
	For the spectral analysis in Section \ref{section: spectrum analysis}, we need some asymptotic expansions for Bessel functions. The equation
	\begin{equation*}
		\partial_r^2 u + \frac{1}{r}\partial_r u +u = 0
	\end{equation*}
	has a fundamental set of solution $J_0$, $Y_0$. $J_0$ is called Bessel function of the first kind while $Y_0$ is Bessel function of second kind.
	
	When $r\rightarrow 0$, the Bessel functions have the following series expansions (see Section 10.8 in \cite{nist}):
	\begin{gather*}
		J_0(r) = \sum_{m=0}^{\infty}\frac{(-1)^m}{(m!)^2}\pare{\frac{r}{2}}^{2m},\\
		Y_0(r) = \frac{2}{\pi}\ln(\frac{r}{2})J_0(r) + \frac{2}{\pi}p(r),
	\end{gather*}
	where
	\begin{equation*}
		p(r) = \sum_{m=0}^{\infty}\frac{(-1)^m}{(m!)^2}\pare{\gamma - \sum_{k=1}^m \frac{1}{k}}\pare{\frac{r}{2}}^{2m},
	\end{equation*}
	and $\gamma$ is Euler's constant. In particular, $J_0(0) = 1$ and $Y_0(r) \approx \ln(r)$, as $r\rightarrow 0$.
	
	For $r\rightarrow \infty$, there are the following asymptotic expansions (see \cite[Section 10.7]{nist}):
	\begin{equation}\label{est: Bessel asymtotic at infity}
		\begin{aligned}
			J_0(r) = \sqrt{\frac{2}{\pi r}}\cos\pare{r-\frac{1}{4}\pi}+o(\frac{1}{r^{1/2}}),\\
			Y_0(r) = \sqrt{\frac{2}{\pi r}}\sin\pare{r-\frac{1}{4}\pi}+o(\frac{1}{r^{1/2}}).
		\end{aligned}
	\end{equation}

	\subsection{Gagliardo-Nirenberg-Sobolev inequality}
	The sharp Gagliardo-Nirenberg-Sobolev (GNS) inequality in $\R^d$, $d\geq 2$ was proved by Weinstein \cite{weinstein1982nonlinear}. We state the $d=2$ case. See e.g. \cite{Frank2014GROUNDSO} for a proof.
	\begin{proposition}\label{prop: GNS inequality}
		For any $p>2$, $u\in H^1(\R^2)$ satisfies
		\begin{equation}\label{ineq: GNS inequality}
			\norm{u}_{L^p(\R^2)}^p \leq \frac{p}{2}\norm{Q}_{L^2(\R^2)}^{2-p} \norm{\nabla u}_{L^2(\R^2)}^{p-2} \norm{u}_{L^2(\R^2)}^2
		\end{equation}
		where $Q$ is the unique positive, radial and exponentially decaying solution of
		\begin{equation}\label{eqn: ground state}
			(p-2)\Delta Q + 2Q^{p-1} - 2Q=0.
		\end{equation}
		The equality holds if and only if $u(x) = aQ(b(x-c))$ for $a\in \mathbb{C}\setminus\{0\}$, $b>0$ and $c\in\R$.
	\end{proposition}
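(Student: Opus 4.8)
The plan is to follow Weinstein's variational argument. Introduce the scale-invariant Weinstein quotient
\begin{equation*}
	J(u) := \frac{\norm{\nabla u}_{L^2(\R^2)}^{p-2}\,\norm{u}_{L^2(\R^2)}^{2}}{\norm{u}_{L^p(\R^2)}^{p}}, \qquad u\in H^1(\R^2)\setminus\{0\},
\end{equation*}
which is invariant under $u\mapsto \mu\,u(\lambda(\cdot-c))$ for $\mu\in\mathbb{C}\setminus\{0\}$, $\lambda>0$, $c\in\R^2$. Since \eqref{ineq: GNS inequality} with the displayed constant is exactly the assertion that $J_{\mathrm{opt}}:=\inf_u J(u)=\tfrac2p\norm{Q}_{L^2}^{p-2}$, with equality on \eqref{ineq: GNS inequality} precisely for the minimizers of $J$, the whole proof reduces to (i) producing a minimizer, (ii) identifying it, and (iii) evaluating $J$ on it.

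For existence: take a minimizing sequence and replace each term by its Schwarz symmetrization; by the Pólya–Szegő inequality $\norm{\nabla u^{*}}_{L^2}\le\norm{\nabla u}_{L^2}$ and equimeasurability preserves the $L^2$ and $L^p$ norms, so $J(u^{*})\le J(u)$ and we may assume the sequence radial, nonnegative and nonincreasing. Using the scaling invariance normalize so that $\norm{u_n}_{L^2}=\norm{\nabla u_n}_{L^2}=1$; the sequence is then bounded in $H^1_{rad}(\R^2)$, and because $H^1_{rad}(\R^2)\hookrightarrow L^p(\R^2)$ is compact for every $p\in(2,\infty)$ (Strauss's radial compactness lemma — this is where $d=2$ enters, giving compactness for all such $p$), a subsequence converges weakly in $H^1$ and strongly in $L^p$ to some $\psi$. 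Since $\norm{u_n}_{L^p}^p=1/J(u_n)$ stays bounded away from $0$, one has $\psi\neq0$; weak lower semicontinuity gives $\norm{\psi}_{L^2},\norm{\nabla\psi}_{L^2}\le1$, hence $J(\psi)\le\norm{\psi}_{L^p}^{-p}=J_{\mathrm{opt}}$, so $\psi$ is a minimizer and in fact $\norm{\psi}_{L^2}=\norm{\nabla\psi}_{L^2}=1$.

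As a minimizer of $J$, $\psi$ satisfies an Euler–Lagrange equation $\lambda_1\Delta\psi-\lambda_2\psi+\tfrac p2\psi^{p-1}=0$ with Lagrange multipliers $\lambda_1,\lambda_2$; testing against $\psi$ and against the Pohozaev multiplier $x\cdot\nabla\psi$ (in $\R^2$ the $\int|\nabla\psi|^2$ term of the Pohozaev identity drops out, leaving $\norm{\psi}_{L^p}^p=\lambda_2\norm{\psi}_{L^2}^2$) forces $\lambda_1,\lambda_2>0$, and elliptic regularity together with an ODE/maximum-principle analysis shows $\psi$ is smooth, positive and exponentially decaying. Rescaling $\psi\mapsto\mu\,\psi(\lambda\,\cdot)=:R$ we may arrange that $R$ solves exactly $(p-2)\Delta R+2R^{p-1}-2R=0$; since $\psi$ came from a Schwarz symmetrization it is radial about the origin, so $R$ is too. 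The two identities for this equation — testing against $R$, and the two-dimensional Pohozaev identity — give $\norm{R}_{L^p}^p=\tfrac p2\norm{R}_{L^2}^2$ and $\norm{\nabla R}_{L^2}=\norm{R}_{L^2}$, whence $J_{\mathrm{opt}}=J(R)=\tfrac2p\norm{R}_{L^2}^{p-2}$. Finally invoke Kwong's uniqueness theorem for positive radial decaying solutions of $\Delta w-w+w^{p-1}=0$ on $\R^2$ (equivalent, after an affine rescaling, to \eqref{eqn: ground state}; the zeroth-order term prevents scale invariance, so the positive solution set is a single translation orbit), which identifies $R$ with $Q$, giving $\norm{R}_{L^2}=\norm{Q}_{L^2}$ and hence $J_{\mathrm{opt}}=\tfrac2p\norm{Q}_{L^2}^{p-2}$.

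For the equality cases: equality in \eqref{ineq: GNS inequality} means $J(u)=J_{\mathrm{opt}}$; by the diamagnetic inequality $\norm{\nabla|u|}_{L^2}\le\norm{\nabla u}_{L^2}$ (whose equality case forces $u=e^{i\gamma}|u|$), $|u|$ must itself be a minimizer, and tracing equality back through Pólya–Szegő forces $|u|$ to be, up to a translation, radial and nonincreasing, after which the Euler–Lagrange analysis and Kwong's uniqueness force $|u|(x)=|a|\,Q(b(x-c))$, i.e.\ $u(x)=aQ(b(x-c))$; the converse is the computation already made. The main obstacle is the uniqueness step: Kwong's theorem is the one genuinely nontrivial external input, and without it the variational argument would only pin $J_{\mathrm{opt}}$ to $\tfrac2p$ times the $L^2$-norm of \emph{some} positive radial solution of \eqref{eqn: ground state}. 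A secondary point needing care is the compactness argument — justifying the Pólya–Szegő reduction in $H^1$, the radial $L^p$-compactness in two dimensions, and the non-vanishing of the minimizing sequence.
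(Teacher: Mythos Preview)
The paper does not actually prove this proposition: it states the sharp GNS inequality and refers the reader to Weinstein \cite{weinstein1982nonlinear} and to \cite{Frank2014GROUNDSO} for a proof. Your proposal is a correct and complete outline of Weinstein's variational argument (minimizing the scale-invariant quotient via Schwarz symmetrization and Strauss compactness, reading off the Euler--Lagrange equation, computing the constant through the Pohozaev identities, and invoking Kwong's uniqueness), which is precisely the approach in those references; so there is nothing to compare.
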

	
	\begin{remark}\label{remark: no minimizer on disc}
		If we restrict the inequality to $H^1_{rad,0}(\D)$, then there are no functions that saturate this inequality. Indeed, if $u\in H^1_{rad,0}(\D)$ minimizes the GNS inequality on $\D$, since $u$ vanishes on $\partial \D$, it is also a minimizer of (\ref{ineq: GNS inequality}). This means $u$ equals to some $aQ(bx)$, which is positive on the whole $\R^2$, a contradiction.
	\end{remark}
	The function $Q$ is called the \emph{ground state} solution. We state some simple bounds for $Q$.
	\begin{lemma}\label{lemma: ground state}
		The radial ground state solution $Q = Q(r)$ satisfies
		\begin{enumerate}[(1)]
			\item $Q(r)$ is decreasing for $r\in [0,\infty)$.
			\item $Q(r) , \nabla Q(r)= O(e^{-cr})$.
			\item $Q(0) = \norm{Q}_{\infty} \geq (p/2)^{1/(p-2)}$.
		\end{enumerate}	
	\end{lemma}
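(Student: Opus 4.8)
All three assertions are classical properties of the ground state (see, e.g., \cite{weinstein1982nonlinear, Frank2014GROUNDSO}); I would give a short self-contained argument working with the radial equation
\[
(p-2)\pare{Q'' + \tfrac1r Q'} + 2Q^{p-1} - 2Q = 0 ,\qquad Q'(0)=0 ,
\]
satisfied by the positive radial solution $Q$ of Proposition \ref{prop: GNS inequality}. As a preliminary I would record that $Q, Q', Q'' \to 0$ as $r\to\infty$: from $(rQ')' = r\,\Delta Q = \tfrac{2r}{p-2}(Q-Q^{p-1})$ and the assumed exponential decay of $Q$ (so that $\int^\infty s(Q-Q^{p-1})\,ds$ converges), $rQ'(r)$ has a finite limit, which must be $0$ since otherwise $Q(r)\sim c\log r$ contradicts boundedness; then $Q'(r)\to 0$, and hence $Q''(r) = \Delta Q(r) - \tfrac1r Q'(r)\to 0$.

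The device behind (1) and (3) is the energy $E(r) := \tfrac{p-2}{2}Q'(r)^2 + \tfrac2p Q(r)^p - Q(r)^2$. Using the equation, $E'(r) = Q'(r)\sqbrac{(p-2)Q'' + 2Q^{p-1} - 2Q} = -\tfrac{p-2}{r}Q'(r)^2 \le 0$, so $E$ is non-increasing; by the preliminary $E(r)\to 0$, hence $E(r)\ge 0$ for all $r\ge 0$. Evaluating at $r=0$ (where $Q'(0)=0$) gives $\tfrac2p Q(0)^p \ge Q(0)^2$, i.e.\ $Q(0)\ge (p/2)^{1/(p-2)}$, and in particular $Q(0)>1$. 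For monotonicity I would argue by contradiction: the equation at $r=0$ gives $Q''(0) = \tfrac1{p-2}\pare{Q(0)-Q(0)^{p-1}} < 0$, so $Q'<0$ just to the right of $0$; if $Q'$ vanishes again, let $r_1>0$ be its first zero after $0$, so $Q'<0$ on $(0,r_1)$, which forces $Q''(r_1)\ge 0$. Then the equation at $r_1$ (where $Q'(r_1)=0$) gives $Q(r_1)^{p-2}\le 1$, whence $E(r_1) = Q(r_1)^2\pare{\tfrac2p Q(r_1)^{p-2}-1} < 0$, contradicting $E\ge 0$. Therefore $Q'<0$ on $(0,\infty)$, so $Q$ is strictly decreasing and $Q(0)=\norm{Q}_\infty$; combined with the bound on $Q(0)$ this gives (1) and (3).

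For the decay (2) I would use an ODE comparison. Pick $R_0$ with $Q(R_0)^{p-2}\le\tfrac12$ (possible since $Q\to 0$); by monotonicity $Q\le Q(R_0)$ on $[R_0,\infty)$, so there $\Delta Q = \tfrac{2}{p-2}Q\pare{1-Q^{p-2}}\ge \alpha^2 Q$ with $\alpha := (p-2)^{-1/2}$. The barrier $w(r) := Q(R_0)e^{\alpha(R_0-r)}$ satisfies $w''+\tfrac1r w'-\alpha^2 w = -\tfrac{\alpha}{r}w < 0$ on $(R_0,\infty)$ with $w(R_0)=Q(R_0)$ and $w(r)\to 0$, so $w-Q$ cannot attain a negative minimum in $(R_0,\infty)$; hence $Q\le w$ there and $Q(r)=O(e^{-\alpha r})$ after absorbing the compact part $[0,R_0]$ into the constant. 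For $\nabla Q(x)=Q'(|x|)\tfrac{x}{|x|}$, integrating $(sQ')' = \tfrac{2s}{p-2}(Q-Q^{p-1})$ from $r$ to $\infty$ and using $rQ'(r)\to 0$ together with $Q'\le 0$ gives, for $r\ge R_0$,
\[
0 \le -rQ'(r) = \tfrac{2}{p-2}\int_r^\infty s\pare{Q(s)-Q(s)^{p-1}}\,ds \le \tfrac{2}{p-2}\int_r^\infty sQ(s)\,ds \lesssim (1+r)e^{-\alpha r},
\]
so $|\nabla Q(r)| = |Q'(r)| \lesssim e^{-\alpha r}$ for $r$ bounded away from $0$ (and trivially near $0$), which proves (2).

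There is no real difficulty here: the only points deserving care are the sign bookkeeping in the monotonicity step — that a first interior critical point of $Q$ is reached from the decreasing side and hence has $Q''(r_1)\ge 0$ — and verifying the two limits $rQ'(r)\to 0$ and $E(r)\to 0$; every other step is a one-line computation, and if one prefers, the lemma can simply be quoted from the references.
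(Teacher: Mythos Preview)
Your proof is correct. For part~(3) it coincides with the paper's argument: the same energy functional (your $E$ is $(p-2)$ times the paper's), the same computation $E'(r)=-\tfrac{p-2}{r}Q'(r)^2\le 0$, and the same conclusion $E(0)\ge E(\infty)=0$. For parts~(1) and~(2) the paper simply cites \cite[Appendix~B]{tao2006nonlinear}, whereas you supply the standard self-contained arguments (the energy-based contradiction for monotonicity and a barrier comparison for the exponential decay); these are precisely the textbook proofs one finds in such a reference, so nothing is genuinely different. One minor remark: your preliminary step invokes ``the assumed exponential decay of $Q$'' to justify $\int^\infty s(Q-Q^{p-1})\,ds<\infty$ and hence $Q',Q''\to 0$, which at first glance looks circular with~(2); it is harmless here because Proposition~\ref{prop: GNS inequality} already builds exponential decay into the characterization of $Q$, but you could avoid the appearance of circularity by running the barrier argument first.
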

	
	\begin{proof}
		The proof of part (1), (2) can be found, for example, in \cite[Appendix B]{tao2006nonlinear}.
		
		For part (3), as a radial function, (\ref{eqn: ground state}) can be written as
		\begin{equation*}
			Q_{rr}+ \frac{1}{r}Q_r +\frac{2}{p-2}Q^{p-1}-\frac{2}{p-2}Q = 0.
		\end{equation*}
		Define the \emph{energy}
		\begin{equation*}
			E[Q(r)] = \frac{1}{2}Q_r^2(r)+\frac{2}{p(p-2)}Q^p(r)-\frac{1}{p-2}Q^2(r).
		\end{equation*}
		A direct computation shows
		\begin{equation*}
			\frac{d}{dr}E[Q(r)] = -\frac{1}{r}Q_r^2(r)\leq 0.
		\end{equation*}
		Since $Q$ is radial, $Q_r(0)=0$. Moreover, $Q(\infty) = Q_r(\infty)=0$ implies
		\begin{equation*}
			\frac{2}{p(p-2)}Q^p(0)-\frac{1}{p-2}Q^2(0) = E[Q(0)] \geq E[Q(\infty)] = 0.
		\end{equation*}
		Since $Q>0$, this implies $Q(0)\geq (p/2)^{1/(p-2)}$. Since $Q$ is decreasing,  $\norm{Q}_{L^{\infty}(\R^2)} = Q(0)$. 
	\end{proof}

	\section{Away from the soliton manifold}\label{section: away from soliton}
	The argument for this part is in the the same spirit as the small truncation $K\ll 1$ case. We follow a modification of Bourgain's argument, as in \cite{oh2021optimal}.
	
	We begin with a large deviation result for $\mu$.
	\begin{lemma}\label{lemma: large deviation}
		For any $N \leq \infty$, 
		\begin{equation*}
			\mathbb{E}_{\mu}\sqbrac{\norm{P_{\geq N}u}_{L^4(\D)}} \lesssim (\log N)^{\frac{1}{4}} N^{-\frac{1}{2}}.
		\end{equation*}
		In particular,
		$\int_{\D}|u|^4 dx$ is $\mu$-almost surely finite.
	\end{lemma}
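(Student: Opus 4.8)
The plan is to estimate the fourth moment $\E_\mu\norm{P_{\geq N}u}_{L^4(\D)}^4$ and then deduce the first‑moment bound by Jensen's inequality, $\E_\mu\norm{P_{\geq N}u}_{L^4(\D)}\leq\pare{\E_\mu\norm{P_{\geq N}u}_{L^4(\D)}^4}^{1/4}$. Writing $P_{\geq N}u(\omega)=\sum_{n\geq N}\frac{g_n(\omega)}{z_n}e_n$, for each fixed $x\in\D$ this is a mean‑zero complex Gaussian of variance $\sigma_N(x)^2:=\sum_{n\geq N}\frac{e_n(x)^2}{z_n^2}$ (recall $e_n$ is real), so $\E_\mu|P_{\geq N}u(x)|^4=2\sigma_N(x)^4$ and, by Tonelli, $\E_\mu\norm{P_{\geq N}u}_{L^4(\D)}^4=2\norm{\sigma_N}_{L^4(\D)}^4$. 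Hence the lemma reduces to the deterministic estimate $\norm{\sigma_N}_{L^4(\D)}^4\lesssim (\log N)\,N^{-2}$.

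To control $\sigma_N$ pointwise I would invoke the Bessel asymptotics recalled in Section~\ref{section: Bessel functions}. From $J_0(0)=1$ and \eqref{est: Bessel asymtotic at infity} one has $|J_0(s)|\lesssim (1+s)^{-1/2}$ for $s\geq 0$, while estimating $\int_0^1 J_0(z_nr)^2\,r\,dr$ directly with the same asymptotics gives $\norm{J_0(z_n\cdot)}_{L^2(\D)}^2\sim z_n^{-1}$. Writing $r=|x|$, this yields the eigenfunction bound
\[
	e_n(r)^2=\frac{J_0(z_nr)^2}{\norm{J_0(z_n\cdot)}_{L^2(\D)}^2}\lesssim\frac{z_n}{1+z_nr}\sim\min\pare{z_n,\tfrac1r}.
\]
Summing over $n\geq N$ with $z_n\sim n$ (from \eqref{est: roots of J_0}) and splitting at $n\sim r^{-1}$, I expect: for $r\geq 1/N$ every term is already in the regime $e_n(r)^2/z_n^2\lesssim (r z_n^2)^{-1}$, so $\sigma_N(r)^2\lesssim r^{-1}\sum_{n\geq N}n^{-2}\sim (rN)^{-1}$; for $r<1/N$,
\[
	\sigma_N(r)^2\lesssim\sum_{N\leq n\lesssim 1/r}\tfrac1n+\tfrac1r\sum_{n\gtrsim 1/r}\tfrac1{n^2}\lesssim 1+\log\tfrac1{rN}.
\]

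Integrating in polar coordinates, $\norm{\sigma_N}_{L^4(\D)}^4=2\pi\int_0^1\sigma_N(r)^4\,r\,dr$: on $\brac{r<1/N}$ the substitution $r=t/N$ bounds the contribution by $N^{-2}\int_0^1\pare{1+\log\tfrac1t}^2 t\,dt=O(N^{-2})$, while on $\brac{1/N\leq r\leq 1}$ one gets $\lesssim N^{-2}\int_{1/N}^1 r^{-1}\,dr=N^{-2}\log N$, which dominates and is the origin of the logarithmic loss. This gives $\norm{\sigma_N}_{L^4(\D)}^4\lesssim (\log N)\,N^{-2}$, hence the stated bound. For the last assertion, the same moment identity shows that the partial sums $\sum_{n\leq M}\frac{g_n}{z_n}e_n$ form a Cauchy sequence in $L^4(d\mu\otimes dx)$ — since $\E_\mu\bigl\Vert\sum_{M<n\leq M'}\frac{g_n}{z_n}e_n\bigr\Vert_{L^4(\D)}^4\leq 2\norm{\sigma_M}_{L^4(\D)}^4\to 0$ — and therefore converge there; the limit agrees with the $L^2$‑limit $u$, so by Fubini $\int_\D|u|^4\,dx<\infty$ for $\mu$‑almost every $\omega$.

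The main obstacle is the pointwise analysis of $\sigma_N$: the trivial bound $|J_0|\leq 1$ alone gives only $e_n(r)^2\lesssim z_n$, hence the divergent $\sum_{n\geq N}z_n^{-1}$, so one genuinely needs the decay $|J_0(s)|\lesssim s^{-1/2}$ together with the normalization $\norm{J_0(z_n\cdot)}_{L^2(\D)}^2\sim z_n^{-1}$ in order to obtain the sharp size $\sigma_N(r)^2\sim(rN)^{-1}$ in the bulk $r\in[1/N,1]$; with these in hand the logarithmic factor is forced by $\int_{1/N}^1(rN)^{-2}r\,dr\sim N^{-2}\log N$.
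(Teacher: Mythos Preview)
Your argument is correct and reaches the same fourth–moment bound $\E_\mu\norm{P_{\geq N}u}_{L^4(\D)}^4\lesssim(\log N)N^{-2}$ as the paper, but by a genuinely different route. The paper does not analyse $\sigma_N$ pointwise; instead it quotes Tzvetkov's eigenfunction bound $\norm{e_n}_{L^4(\D)}\lesssim(\log(2+n))^{1/4}$, expands $\E_\mu\int_\D|P_{\geq N}u|^4$ as a double sum over $n,m\geq N$, applies Cauchy–Schwarz $\int_\D e_n^2e_m^2\leq\norm{e_n}_{L^4}^2\norm{e_m}_{L^4}^2$, and then sums $\sum_{n\geq N}z_n^{-2}\norm{e_n}_{L^4}^2\lesssim\sum_{n\geq N}(\log n)^{1/2}n^{-2}$ by parts. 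Your approach is more self-contained: you derive the needed eigenfunction decay $e_n(r)^2\lesssim\min(z_n,r^{-1})$ directly from the Bessel asymptotics already recorded in Section~\ref{section: Bessel functions} (together with $\norm{J_0(z_n\cdot)}_{L^2(\D)}^2\sim z_n^{-1}$, which the paper in fact computes later in the proof of Lemma~\ref{lemma: inverse of dG}), and you exploit the exact identity $\E_\mu\norm{P_{\geq N}u}_{L^4}^4=2\norm{\sigma_N}_{L^4}^4$ rather than the Cauchy--Schwarz inequality. This makes transparent that the logarithmic loss comes precisely from $\int_{1/N}^1(rN)^{-2}r\,dr$, whereas in the paper it enters through the black-box factor $(\log n)^{1/4}$ in the Tzvetkov bound. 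The paper's version is shorter because it outsources the Bessel analysis; yours avoids the external citation at the cost of a few more lines.
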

	\begin{proof}
		We use a bound for eigenfunctions (see \cite{tzvetkov2006invariant}):
		\begin{equation*}
			\norm{e_n}_{L^4(\D)} \lesssim \pare{\log(2+n)}^{\frac{1}{4}}.
		\end{equation*}
		Then,
		\begin{multline*}
			\mathbb{E}_{\mu}\sqbrac{\norm{P_{\geq N}u}_{L^4(\D)}^4} = \mathbb{E}\sqbrac{\int_\D 2\pare{\sum_{n\geq N} g_n^2\frac{e_n^2}{z_n^2}}^2-\sum_{n\geq N} g_n^4 \frac{e_n^4}{z_n^4}} \lesssim \sum_{\substack{n\neq m \\ n,m\geq N}} \int_\D \mathbb{E}\sqbrac{g_n^2g_m^2}\frac{e_n^2}{z_n^2}\frac{e_m^2}{z_m^2}\\
			\leq \pare{\sum_{n\geq N} \frac{1}{z_n^2}\norm{e_n}_4^2}^2 \lesssim \pare{\sum_{n\geq N} \frac{(\log n)^{1/2}}{n^2}}^2  \lesssim \log N \cdot N^{-2}.
		\end{multline*}
		The last inequality above used summation by parts:
		\begin{multline*}
			\sum_{n\geq N}\frac{(\log n)^{1/2}}{n^2} \leq \sum_{k\geq\log N} \sqrt{k}2^{-k} = 2\sqrt{\log N}N^{-1} +\sum_{k\geq \log N}\pare{\sqrt{k+1}-\sqrt{k}}2^{-k}\\
			\leq 2\sqrt{\log N}N^{-1} +\sum_{k\geq \log N} \frac{1}{\sqrt{k}}2^{-k} \leq 3(\log N)^{1/2} N^{-1}.
		\end{multline*}
	\end{proof}  
	
	Instead of directly working on the complement of some $\epsilon$-neighborhood of the soliton manifold $\mathcal{M}$, we start with an alternative characterization of the sharpness of GNS inequality. For small $\gamma>0$, define subdomain
	\begin{equation*}
		S_{\gamma}:=\bigcap\limits_N\brac{u\in L^2_{rad}(\D): \begin{aligned}
				\norm{u}_{L^2(\D)} &\leq \norm{Q}_{L^2(\R^2)},\\ \frac{1}{4}\int_{\D} |P_{\leq N} u|^4 &\leq \frac{1-\gamma}{2}\int_{\D} |\nabla P_{\leq N}u|^2.
		\end{aligned}}
	\end{equation*}
	
	Our aim in this section is the following.
	\begin{proposition}\label{prop: outside ground state}
		\begin{equation*}
			\E_\mu\sqbrac{e^{\frac{1}{4}\int_{\D} |u|^4},S_{\gamma}}<\infty
		\end{equation*}
	\end{proposition}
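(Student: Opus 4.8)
I would run the Bourgain-type large-deviation argument, with the uniform $L^2$-ball deficit of the small-mass case replaced by the scale-by-scale gradient deficit encoded in $S_\gamma$. The first step is the layer-cake reduction: since
\[
	\E_\mu\sqbrac{e^{\frac14\int_\D|u|^4},S_\gamma}\le 1+\int_0^\infty e^M\,\mu\pare{\tfrac14\textstyle\int_\D|u|^4>M,\ S_\gamma}\,dM,
\]
it suffices to prove a tail bound $\mu\pare{\tfrac14\int_\D|u|^4>M,\ S_\gamma}\lesssim e^{-(1+\delta)M}$ for some $\delta>0$ and all $M\ge 1$ (the range $M\in[0,1]$ contributes at most $e$ to the integral).

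Fix $M$ and a dyadic scale $N=N(M)$ equal to a suitable fixed power of $M$ strictly between $M^{1/2}$ and $M$ (for concreteness, the largest dyadic number $\le M^{2/3}$), so that $N^2/\log N\gg M$ while $N=o(M)$. Decompose $u=P_{\le N}u+P_{>N}u$ and use the elementary inequality $\norm{f+g}_{L^4}^4\le(1+\epsilon)\norm{f}_{L^4}^4+C_\epsilon\norm{g}_{L^4}^4$ with $\epsilon>0$ small and fixed. On $\brac{\tfrac14\int_\D|u|^4>M}$ I would split into two cases. If $C_\epsilon\norm{P_{>N}u}_{L^4(\D)}^4>M$: here $\int_\D|P_{>N}u|^4$ is a degree-$4$ polynomial in the Gaussians $g_n$, so by Gaussian hypercontractivity all its moments are comparable to its mean, and combining this with the bound $\E_\mu\norm{P_{>N}u}_{L^4}^4\lesssim(\log N)N^{-2}$ from the proof of Lemma \ref{lemma: large deviation} one gets $\mu\pare{\int_\D|P_{>N}u|^4>t}\lesssim\exp\pare{-c(tN^2/\log N)^{1/2}}$; taking $t=M/C_\epsilon$, this event has probability $\lesssim\exp\pare{-c(MN^2/(C_\epsilon\log N))^{1/2}}\le e^{-2M}$ for large $M$, by the choice of $N$. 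If instead $C_\epsilon\norm{P_{>N}u}_{L^4(\D)}^4\le M$, then $(1+\epsilon)\norm{P_{\le N}u}_{L^4}^4\ge\int_\D|u|^4-M\ge 3M$, hence $\tfrac14\int_\D|P_{\le N}u|^4\ge\tfrac{3M}{4(1+\epsilon)}$; but the defining inequality of $S_\gamma$ at scale $N$ gives $\tfrac14\int_\D|P_{\le N}u|^4\le\tfrac{1-\gamma}{2}\int_\D|\nabla P_{\le N}u|^2=\tfrac{1-\gamma}{2}\sum_{n\le N}|g_n|^2$, using $\int_\D|\nabla P_{\le N}u|^2=\sum_{n\le N}|g_n|^2$ under $\mu$ (which follows from $\norm{\nabla e_n}_{L^2}^2=z_n^2$). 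Therefore $\sum_{n\le N}|g_n|^2\ge\tfrac{3}{2(1-\gamma)(1+\epsilon)}M=:c_\gamma M$ with $c_\gamma>1$ for $\epsilon$ small (the deficit $\gamma>0$ only improves $c_\gamma$).

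Finally, $\sum_{n\le N}|g_n|^2$ is a sum of $N$ i.i.d.\ $\mathrm{Exp}(1)$ random variables with mean $N=o(M)$, so the Chernoff bound $\mu\pare{\sum_{n\le N}|g_n|^2>s}\le\exp\pare{-\bigl(s-N-N\log(s/N)\bigr)}$ evaluated at $s=c_\gamma M$ yields probability $\exp(-c_\gamma M+o(M))\le e^{-(1+\delta)M}$ for large $M$, for any fixed $\delta\in(0,c_\gamma-1)$. Adding the two cases gives $\mu\pare{\tfrac14\int_\D|u|^4>M,\ S_\gamma}\lesssim e^{-(1+\delta)M}$, and $\int_0^\infty e^M e^{-(1+\delta)M}\,dM<\infty$ finishes the proof.

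The crux is the tension in the choice of $N$: killing the high-frequency remainder with the needed (super-)exponential rate forces $N$ to be polynomially large in $M$ (a first-moment/Markov estimate alone would require $N$ exponentially large in $M$, which is fatal), so one must exploit the concentration of $\int_\D|P_{>N}u|^4$ around its small mean; on the other hand the low-frequency estimate needs $N=o(M)$, since once $N$ is comparable to $M$ the large-deviation rate for $\sum_{n\le N}|g_n|^2$ drops below the $e^M$ coming from the layer-cake formula. A power such as $M^{2/3}$ threads this needle. The defining property of $S_\gamma$ — that at every dyadic scale the $L^4$ norm is controlled by the gradient with a constant $\le\tfrac12$ — is exactly what converts the large-$L^4$ event into a Gaussian large-deviation event with rate bounded away from $1$; the genuinely delicate regime, the $\epsilon$-neighborhood of the soliton manifold, is treated separately in Sections 5–7.
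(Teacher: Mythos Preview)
Your proof is correct and takes a genuinely different route from the paper's. The paper partitions according to the first dyadic scale $N$ at which $\norm{P_{\ge N}u}_4$ falls below $(\log N)^{3/4}$, and on each piece $S_\gamma\cap F_{2N}$ combines the $S_\gamma$ inequality with H\"older to bound the expectation by a Gaussian moment generating function $\E_\mu\bigl[\exp\bigl(\tfrac{c_3}{2}\sum_{n\le N}|g_n|^2\bigr)\bigr]$ times the small factor $\mu(F_{2N})^{\eta/(1+\eta)}$; this MGF is finite only when $c_3=(1-\gamma)(1+\epsilon)(1+\eta)<1$, so in the paper's scheme the deficit $\gamma>0$ is essential. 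You instead layer-cake in $M=\tfrac14\int|u|^4$, pick a single scale $N\sim M^{2/3}$ per level set, and trade the MGF for a Chernoff tail bound on the $\mathrm{Gamma}(N,1)$ variable $\sum_{n\le N}|g_n|^2$; the resulting rate $c_\gamma=\tfrac{3}{2(1-\gamma)(1+\epsilon)}$ exceeds $1$ already at $\gamma=0$, as you yourself note. One minor quibble: hypercontractivity gives $\norm{X}_p\lesssim p^2\norm{X}_2$ for the degree-$4$ chaos $X=\norm{P_{>N}u}_4^4$, not $\lesssim p^2\,\E X$; your tail bound follows most cleanly from Fernique applied to the seminorm $\norm{P_{>N}u}_4$, exactly as the paper does just above \eqref{ineq:section2 F_N}.

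The $\gamma=0$ remark is worth pausing on. The $S_\gamma$ constraint at $\gamma=0$ and scale $N$ is nothing but the sharp GNS inequality for $P_{\le N}u\in H^1_{rad,0}(\D)\hookrightarrow H^1(\R^2)$, which holds automatically once $\norm{u}_2\le\norm{Q}_2$; thus $S_0$ is the full mass ball. Read literally, your argument therefore appears to prove Theorem~\ref{thm: main result} directly, bypassing the soliton analysis of Sections~4--7 entirely. I have checked the numerics (the Young split, the high-frequency tail $\exp(-cM^{7/6}/\sqrt{\log M})$, the Chernoff exponent $c_\gamma M-N-N\log(c_\gamma M/N)$ with $N=o(M)$) and cannot locate an error; either this is a real simplification of both this paper and \cite{oh2021optimal}, or there is a subtlety neither of us is seeing --- in either case it merits careful scrutiny.
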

	
	\begin{proof}
		We further slice $S_{\gamma}$ according to its $L^4$ size:
		\begin{equation*}
			F_N := \left\{ u\in L_{rad}^2(\D): \norm{P_{\geq M}u}_4 > (\log M)^{3/4}, \, M< N; \; \norm{P_{\geq N}u}_4 \leq (\log N)^{3/4} \right\}
		\end{equation*}
		As a corollary of Fernique's theorem \cite{fernique1975regularite} (see also Lemma 4.2 in \cite{oh2021optimal}), there exists some $c>0$, such that
		\begin{equation*}
			\mu\pare{\norm{P_{\geq N}u}_4 \geq t \mathbb{E}\sqbrac{\norm{P_{\geq N}u}_4}} \leq e^{-ct^2}.
		\end{equation*}
		By Lemma \ref{lemma: large deviation}, taking $t = (\log N)^{3/4} \cdot \mathbb{E}\sqbrac{\norm{P_{\geq N}u}_4}^{-1} \gtrsim (\log N)^{1/2}N^{1/2}$,
		\begin{align}\label{ineq:section2 F_N}
			\mu(F_{2N}) \leq \mu(\norm{P_{\geq N}u}_4 \geq (\log N)^{3/4}) \leq  \exp\pare{-c(\log N)N}.
		\end{align}
		By Young's inequality,
		\begin{equation}\label{ineq:section2 holder}
			\int_\D |u|^4 = \int_\D |P_{\leq N}u + P_{\geq 2N}u|^4 \leq (1+\epsilon)\int_\D |P_{\leq N}u|^4 + C_{\epsilon}\int_\D |P_{\geq 2N}u|^4.
		\end{equation}
		On $S_{\gamma}\cap F_{2N}$, using  \eqref{ineq:section2 holder}, the definition of $S_{\gamma}$ and H\"{o}lder's inequality,
		\begin{align*}
			\mathbb{E}&\sqbrac{\exp\pare{\frac{1}{4}\int_\D |u|^4},
				S_{\gamma}\cap F_{2N}}\leq e^{\frac{C_{\epsilon}}{4}(\log N)^3}\mathbb{E}\sqbrac{\exp\pare{\frac{1}{4}\int_\D (1+\epsilon)|P_{\leq N}u|^4},S_{\gamma}\cap F_{2N}}\\
			&\leq e^{\frac{C_{\epsilon}}{4}(\log N)^3}\mathbb{E}_{\mu}\sqbrac{\exp\pare{\frac{(1-\gamma)(1+\epsilon)}{2}\int_\D|\nabla P_{\leq N} u|^2}, F_{2N}}\\
			&\leq e^{\frac{C_{\epsilon}}{4}(\log N)^3} \mathbb{E}_{\mu}\sqbrac{\exp\pare{\frac{(1-\gamma)(1+\epsilon)(1+\eta)}{2}\int_\D|\nabla P_{\leq N} u|^2}}^{\frac{1}{1+\eta}}\mu(F_{{2N}})^{\frac{\eta}{1+\eta}}\\
			&\leq e^{C_1(\log N)^3 - c_2(\log N)N}\E_{\mu}\sqbrac{\exp\pare{\frac{c_3}{2}\int_\D |\nabla P_{\leq N}u|^2}}^{\frac{1}{1+\eta}}.
		\end{align*}
		Above $C_1 = C_1(\epsilon)$, $c_2 = c_2(\eta)$, and $c_3 = (1-\gamma)(1+\epsilon)(1+\eta)<1$ by choosing $\epsilon, \eta$ small enough. Since the expectation in the last line only involves $P_{\leq N}u\in E_N$,
		\begin{equation}\label{eqn: exponential gaussian}
			\E_{\mu}\sqbrac{\exp\pare{\frac{c_3}{2}\int_\D |\nabla P_{\leq N}u|^2}} = \prod_{n\leq N}\E_{\mathbb{P}}\sqbrac{e^{\frac{c_3}{2}g_n^2}} = \exp\pare{\frac{N}{2}\log(\frac{1}{1-c_3})}.
		\end{equation}
		Summing over $N$, $\mathbb{E}_\mu\sqbrac{\exp\pare{\frac{1}{4}\int_B |u|^4}, S_{\gamma}}$ is bounded.
	\end{proof}

	\section{Reduce to the soliton neighborhood: a stability argument}\label{section: reduce to the soliton neighborhood}
	The sharp GNS inequality (\ref{ineq: GNS inequality}), at critical $p=4$ for $u\in H^1_{rad}(\R^2)$, reads as
	\begin{equation}\label{ineq: GNS Hamiltonian}
		\frac{1}{4}\norm{u}_{L^4(\R^2)}^4 \leq \frac{1}{2}\norm{\nabla u}_{L^2(\R^2)}^2 \pare{\frac{\norm{u}_{L^2(\R^2)}}{\norm{Q}_{L^2(\R^2)}}}^2.
	\end{equation}
	On $L^2$-cutoff $\norm{u}_{L^2(\R^2)}\leq \norm{Q}_{L^2(\R^2)}$, this implies
	\begin{equation*}
		H_{\R^2}(u) \geq 0.
	\end{equation*}
	The minima of this functional coincide with the minimizers of GNS inequality with restriction $\norm{u}_{L^2(\R^2)} = \norm{Q}_{L^2(\R^2)}$. By Proposition \ref{prop: GNS inequality}, these minimum points form a 2-dimensional (real) submanifold, the soliton manifold
	\begin{equation*}
		\mathcal{M}_{\R^2} := \brac{e^{i\theta}Q_{\delta}: \theta \in \mathbb{T}, \delta>0},
	\end{equation*}
	where
	\begin{equation*}
		Q_{\delta}(x) = \delta^{-1}Q(\delta^{-1}x).
	\end{equation*}
	The scaling is chosen to keep the $L^2(\R^2)$ norm invariant. By (\ref{eqn: ground state}),
	\begin{equation}\label{eqn: scaling ground state}
		\Delta Q_\delta + Q_\delta^3 - \delta^{-2}Q_\delta = 0.
	\end{equation}
	Since $H^1_{rad,0}(\D)$ embeds naturally into $H^1(\R^2)$, \eqref{ineq: GNS Hamiltonian} holds true on $\D$. In particular, $H(u)> 0$ when $\norm{u}_{L^2(\D)} \leq \norm{Q}_{L^2(\R^2)}$.
	
	The $(\delta_*, \delta^*)$-segment of the $\epsilon$-neighborhood of $\mathcal{M}_{\R^2}$ in $L^2_{rad}(\D)$ is defined as
	\begin{equation*}
		U_{\epsilon}(\delta_*, \delta^*):=\brac{ u\in L_{rad}^2(\D):
			\begin{aligned}
				\norm{u}_{L^2(\D)} &\leq \norm{Q}_{L^2(\R^2)},\\
				\norm{u-e^{i\theta}Q_{\delta}}_{L^2(D)}\leq \epsilon, &\; \text{for some } \theta\in \mathbb{T},\, \delta \in  (\delta_*,\delta^*)
		\end{aligned}}.
	\end{equation*}
	
	In Section \ref{section: away from soliton}, we proved integrability on $S_\gamma$. The following stability result shows that $S_\gamma$ contains $U_\epsilon^C$.
	
	\begin{lemma}\label{lemma: stability}
		Given $\epsilon$, $\delta^*$,  there exists $\gamma = \gamma(\epsilon, \delta^*)$, such that
		\begin{equation*}
			U_{\epsilon}(0,\delta^*)^C\subset S_{\gamma},
		\end{equation*}
		where the complement of $U_\epsilon$ is taken in
		\[\brac{u\in L^2_{rad}(\D): \norm{u}_{L^2(\D)} \leq \norm{Q}_{L^2(\R^2)}}.\]
	\end{lemma}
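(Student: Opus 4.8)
The plan is to argue by contradiction, exploiting the compactness of bounded sequences in $H^1_{rad,0}(\D)$ together with the fact that the only functions saturating the GNS inequality on $\D$ live on the soliton manifold (Remark \ref{remark: no minimizer on disc}). Suppose the claim fails: then for every $\gamma>0$ there is some $u_\gamma \in U_\epsilon(0,\delta^*)^C$ with $u_\gamma \notin S_\gamma$. By the density argument alluded to in the introduction, we may take $u_\gamma \in H^1_{rad,0}(\D)$, and by the definition of $S_\gamma$ there is a frequency cutoff $N_\gamma$ with
\[
\frac14 \int_\D |P_{\leq N_\gamma} u_\gamma|^4 \;>\; \frac{1-\gamma}{2}\int_\D |\nabla P_{\leq N_\gamma} u_\gamma|^2,
\qquad \norm{u_\gamma}_{L^2(\D)} \leq \norm{Q}_{L^2(\R^2)}.
\]
Writing $w_\gamma := P_{\leq N_\gamma} u_\gamma$, this says $w_\gamma$ nearly saturates the GNS inequality \eqref{ineq: GNS Hamiltonian} on $\D$ (extended by zero to $\R^2$): indeed $\norm{w_\gamma}_{L^2} \leq \norm{u_\gamma}_{L^2} \leq \norm{Q}_{L^2}$, so combining with GNS gives $\frac{1-\gamma}{2}\norm{\nabla w_\gamma}_{L^2}^2 < \frac14 \norm{w_\gamma}_{L^4}^4 \leq \frac12 \norm{\nabla w_\gamma}_{L^2}^2 (\norm{w_\gamma}_{L^2}/\norm{Q}_{L^2})^2 \leq \frac12\norm{\nabla w_\gamma}_{L^2}^2$, which is consistent, and moreover forces the GNS deficit of $w_\gamma$ to be $O(\gamma)\,\norm{\nabla w_\gamma}_{L^2}^2$ while simultaneously forcing $\norm{w_\gamma}_{L^2} \to \norm{Q}_{L^2}$ and the $L^4$-to-$\dot H^1$ ratio to its extremal value.

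Next I would rescale to normalize. Because $U_\epsilon$ is defined with an unrestricted lower bound $\delta_* = 0$ on the dilation parameter, the natural move is to pass to the scale-invariant variables: set $\delta_\gamma := \norm{w_\gamma}_{L^2}^{?}\cdot(\dots)$ chosen so that the rescaled function $\widetilde w_\gamma(x) := \delta_\gamma\, w_\gamma(\delta_\gamma x)$ has $\norm{\nabla \widetilde w_\gamma}_{L^2(\R^2)} = \norm{\nabla Q}_{L^2(\R^2)}$; this is a legitimate reduction since GNS, the $L^2$ norm, and membership in $S_\gamma$ are all scale-covariant in the right way, and since $w_\gamma$ is supported in $\D$ of radius $1$, after rescaling $\widetilde w_\gamma$ is supported in a disc of radius $\delta_\gamma^{-1}$ which may grow. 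With the gradient normalized and $\norm{\widetilde w_\gamma}_{L^2} \to \norm{Q}_{L^2}$, the sequence $\{\widetilde w_\gamma\}$ is bounded in $H^1_{rad}(\R^2)$. Radial functions enjoy the Strauss compact embedding $H^1_{rad}(\R^2) \hookrightarrow\hookrightarrow L^4(\R^2)$, so along a subsequence $\widetilde w_\gamma \to w_\infty$ strongly in $L^4$ and weakly in $H^1$. Lower semicontinuity of the $\dot H^1$ norm plus the near-saturation of GNS then forces $w_\infty$ to be an exact GNS minimizer on $\R^2$ with $\norm{w_\infty}_{L^2} = \norm{Q}_{L^2}$, hence $w_\infty = e^{i\theta_0} Q_{\delta_0}$ for some $\theta_0, \delta_0$ by Proposition \ref{prop: GNS inequality}, and in fact $\delta_0 = 1$ by our normalization, so $\norm{\nabla w_\infty}_{L^2} = \norm{\nabla Q}_{L^2}$ and the convergence $\widetilde w_\gamma \to w_\infty$ is strong in $H^1(\R^2)$ (norm convergence plus weak convergence in Hilbert space). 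Undoing the rescaling and translating back by the $L^2$-closeness of $w_\gamma$ to $u_\gamma$ (which holds because $\norm{P_{\geq N_\gamma} u_\gamma}_{L^2} \to 0$, forced by $\norm{u_\gamma}_{L^2} \leq \norm{Q}_{L^2} = \lim\norm{w_\gamma}_{L^2}$), we conclude that for $\gamma$ small enough, $u_\gamma$ lies within $L^2$-distance $\epsilon$ of $e^{i\theta_0} Q_{\delta_\gamma \delta_0}$ — i.e. $u_\gamma \in U_\epsilon(0,\delta^*)$ provided the limiting scale is bounded above by $\delta^*$. This contradicts $u_\gamma \in U_\epsilon(0,\delta^*)^C$.

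The main obstacle is controlling the dilation parameter, and it is exactly here that the hypothesis involves $\delta^*$ but not $\delta_*$. If the rescaling factors $\delta_\gamma$ degenerate (the concentration scale of $w_\gamma$ shrinks to $0$), then after the Strauss rescaling the limit could, a priori, fail to see the ground state at a fixed scale, or the supports $\D/\delta_\gamma$ could run off to all of $\R^2$ in a way that is harmless — that direction is fine. The genuinely delicate direction is the opposite one: if the concentration scale stays bounded below but the relevant dilation that makes $u_\gamma$ close to a soliton would be a large $\delta > \delta^*$, then $u_\gamma$ need not be in $U_\epsilon(0,\delta^*)$, which is why the conclusion is only an inclusion into $S_\gamma$ of the complement of the restricted neighborhood; I need to check that the $L^2$-mass constraint $\norm{u_\gamma}_{L^2}\leq \norm{Q}_{L^2}$ on the unit disc $\D$ caps the achievable soliton scale $\delta$ by an absolute constant (since $\norm{Q_\delta}_{L^2(\D)} \to 0$ as $\delta\to\infty$, a fixed fraction of the $L^2$-mass of $Q$ cannot be captured by $Q_\delta$ on $\D$ for $\delta$ too large — this is a standard computation with $Q_\delta$'s tail), thereby providing the $\delta^*$ that appears in the statement. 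The remaining routine points — making the Young/triangle inequalities in the definition of $S_\gamma$ quantitative in $\gamma$, and verifying that the choice of $N_\gamma$ can be absorbed by taking $w_\gamma = P_{\leq N_\gamma}u_\gamma$ throughout — I would dispatch with standard estimates.
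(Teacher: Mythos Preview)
Your overall strategy is sound and in fact simpler than the paper's: you replace the Hmidi--Keraani profile decomposition by the Strauss radial compact embedding $H^1_{rad}(\R^2)\hookrightarrow\hookrightarrow L^4(\R^2)$, which is natural here since everything is radial. The paper instead runs a bubble decomposition on the rescaled sequence and only afterwards uses radiality to pin the translation parameter to a bounded set; your route exploits radiality earlier and avoids that machinery.

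However, your last paragraph on the dilation parameter contains a genuine gap. After establishing $\widetilde w_\gamma \to e^{i\theta_0}Q$ strongly in $H^1(\R^2)$ (hence in $L^2$), you need $\delta_\gamma \to 0$ to place $u_\gamma$ inside $U_\epsilon(0,\delta^*)$ and obtain the contradiction. Your proposed fix---that the constraint $\norm{u_\gamma}_{L^2(\D)}\leq\norm{Q}_{L^2}$ caps $\delta$ because $\norm{Q_\delta}_{L^2(\D)}\to 0$ as $\delta\to\infty$---only excludes $\delta_\gamma\to\infty$. It says nothing about the range $\delta_\gamma\in[\delta^*,C]$, where $u_\gamma$ would be $L^2$-close to a soliton at scale exceeding $\delta^*$, which is \emph{not} a contradiction to $u_\gamma\notin U_\epsilon(0,\delta^*)$. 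Note also that $\delta^*$ is \emph{given} in the statement, not something you get to choose as an absolute constant.

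The patch is immediate from what you already have: if along a subsequence $\delta_\gamma\geq c>0$, then every $\widetilde w_\gamma$ is supported in the fixed ball $c^{-1}\D$, so the strong $L^2$ limit $e^{i\theta_0}Q$ would vanish outside $c^{-1}\D$---impossible since $Q>0$ on all of $\R^2$. Hence $\delta_\gamma\to 0$, and eventually $\delta_\gamma<\delta^*$, giving the desired contradiction. This is exactly the dichotomy the paper makes explicit by first disposing of the case $\norm{\nabla P_{\leq N_n}u_n}_{L^2}$ bounded via Rellich--Kondrachov on $\D$ (the limit would be a GNS optimizer supported in $\D$, contradicting Remark~\ref{remark: no minimizer on disc}) before rescaling. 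Add this observation and your argument closes.
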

	\begin{proof}
		The proof follows the idea of Lemma 6.3 in \cite{oh2021optimal}.
		
		Suppose by contradiction that there exists $\epsilon>0$ and $u_n \notin U_{\epsilon}$, $\norm{u_n}_{L^2(\D)}\leq \norm{Q}_{L^2(\R^2)}$, such that, for some $N_n>0$ and $\gamma_n\rightarrow 0$, 
		\begin{equation}\label{eqn: section2 unstable}
			\frac{1}{4}\int_\D |P_{\leq N_n} u|^4 > \frac{1-\gamma_n}{2}\int_\D |\nabla P_{\leq N_n}u|^2.
		\end{equation}
		By GNS inequality (\ref{ineq: GNS inequality}), on the other hand,
		\begin{equation*}
			\frac{1}{4}\int_\D |P_{\leq N_n} u|^4 \leq \frac{1}{2}\int_\D |\nabla P_{\leq N_n}u|^2 \frac{\norm{P_{\leq N_n}u_n}_{L^2(\D)}^2}{\norm{Q}_{L^2(\R^2)}^2}.
		\end{equation*}
		By (\ref{eqn: section2 unstable}),
		\begin{equation*}
			1-\gamma_n \leq \frac{\norm{P_{\leq N_n}u_n}_{L^2(\D)}^2}{\norm{Q}_{L^2(\R^2)}^2} \leq \frac{\norm{u_n}_{L^2(\D)}^2}{\norm{Q}_{L^2(\R^2)}^2}\leq 1.
		\end{equation*}
		Thus
		\begin{align}\label{est: section2 u_n}
			\norm{P_{\leq N_n}u_n}_{L^2(\R^2)} \rightarrow \norm{Q}_{L^2(\R^2)}^2, \quad \norm{u_n - P_{\leq N_n}u_n}_{L^2(\R^2)}\rightarrow 0.
		\end{align}
		
		If $\norm{P_{\leq N_n}u_n}_{\dot{H}^1(\D)}$ is uniformly bounded, then $P_{\leq N_n}u_n$ converges weakly to some $v \in H^1(\D)$. Definition of the eigenfunctions (\ref{def: eigenfunction}) implies $P_{\leq N_n}u_n = 0$ on $\partial \D$, hence in $H^1(2\D)$.  By Rellich-Kondrachov theorem, $P_{\leq N_n}u_n \rightarrow v$ in $L^2\cap L^4(2\D)$ and $v=0$ on $\D^c$, which in turn means $v \in H^1(\R^2)$. Using (\ref{eqn: section2 unstable}) again, we have
		\begin{equation*}
			\frac{1}{4}\int_\D |v|^4 = \lim_{n\rightarrow \infty} \frac{1}{4}\int_\D |P_{\leq N_n} u|^4 \geq \liminf_{n\rightarrow \infty} \frac{1}{2}\int_\D |\nabla P_{\leq N_n}u|^2  \geq \frac{1}{2}\int_\D |\nabla v|^2.
		\end{equation*}
		This shows $v \in H^1(\R^2)$ is an optimizer of GNS inequality supported on $\D$, a contradiction. Therefore, up to a  subsequence,
		\begin{equation*}
			d_n := \norm{P_{\leq N_n}u_n}_{\dot{H}^1(\D)} \rightarrow \infty.
		\end{equation*}
		
		Set $w_n = d_n^{-1}P_{\leq N_n}u_n(d_n^{-1}\cdot)$. By scaling,
		\begin{equation*}
			\norm{w_n}_{\dot{H}^1(\R^2)} = 1, \text{ and } \norm{w_n}_{L^2(\R^2)}\rightarrow \norm{Q}_{L^2(\R^2)}.
		\end{equation*}
		and by (\ref{eqn: section2 unstable}),
		\begin{equation}\label{eqn: section2 w_n L4}
			\limsup_{n\rightarrow \infty}\frac{1}{4}\int_{\R^2} |w_n|^4 \geq \frac{1}{2}.
		\end{equation}
		
		Now, using the bubble decomposition (see \cite{hmidi2005blowup}, Proposition 3.1), there exist $J^*\leq \infty$ functions $\phi_j \in H^1(\R^2)$, and $x_n^j \in \R^2$ and for $J\leq J^*$, there exist $r_n^J \in H^1(\R^2)$, such that
		\begin{equation*}
			w_n(x) = \sum_{j=1}^J\phi_j(x-x_n^j) + r_n^J(x)
		\end{equation*}
		with the following properties:
		\begin{gather*}
			\norm{w_n}_{L^2}^2 = \sum_{j=1}^J \norm{\phi_j}_{L^2}^2 + \norm{r_n^J}_{L^2}^2 + o(1),\\
			\norm{\nabla w_n}_{L^2}^2 + \sum_{j=1}^J \norm{\nabla \phi_j}_{L^2}^2 = \norm{\nabla r_n^J}_{L^2}^2 + o(1),\\
			\limsup_{n\rightarrow \infty}\norm{w_n}_{L^4}^4 = \sum_{j=1}^{J^*}\norm{\phi_j}_{L^4}^4,\\
			\limsup_{J\rightarrow J^*}\limsup_{n\rightarrow \infty}\norm{r_n^J}_{L^4}^4 =0.
		\end{gather*}
		If $J^* = 0$, then $\norm{w_n}_{L^4} = \norm{r_n^0}_{L^4} \rightarrow 0$, which contradicts (\ref{eqn: section2 w_n L4}). For $J^*\geq 1$, combining with (\ref{eqn: section2 w_n L4}),
		\begin{multline*}
			\frac{1}{2}\leq \limsup_{n\rightarrow \infty}\frac{1}{4}\int_{\R^2} |w_n|^4  = \frac{1}{4}\sum_{j=1}^{J^*}\norm{\phi_j}_{L^4}^4 \overset{GNS}{\leq} \frac{1}{2} \sum_{j=1}^{J^*}\norm{\nabla \phi_j}_{L^2}^2 \frac{\norm{\phi_j}_{L^2}^2}{\norm{Q}_{L^2}^2}\\
			\leq \sup_j \norm{\phi_j}_{L^2}^2 \frac{1}{2}\frac{\limsup_{n\rightarrow \infty}\norm{\nabla w_n}_{L^2}^2}{\norm{Q}_{L^2}^2} =\frac{1}{2} \frac{\sup_j \norm{\phi_j}_{L^2}^2}{\norm{Q}_{L^2}^2}.
		\end{multline*}
		On the other hand, 
		\begin{equation*}
			\frac{1}{2} \frac{\sup_j \norm{\phi_j}_{L^2}^2}{\norm{Q}_{L^2}^2} \leq \frac{1}{2}\sum_{j=1}^{J^*}\frac{\norm{\phi_j}_{L^2}^2}{\norm{Q}_{L^2}^2}= \frac{1}{2}\frac{\limsup_{n\rightarrow \infty}\norm{w_n}_{L^2}^2}{\norm{Q}_{L^2}^2} = \frac{1}{2}.
		\end{equation*}
		Thus $J^* = 1$, and $w_n(x) = \phi_1(x-x_n^1) + r_n^1(x)$. This implies that $\phi_1$ is an optimizer of GNS inequality. Moreover, since $\norm{r_n^1}_{L^4}\rightarrow 0$, up to subsequence $w_n \rightarrow \phi_1(\cdot -x_n^1)$ a.e.; since $\norm{w_n}_{L^2} \rightarrow \norm{\phi_1}_{L^2}$, $w_n - \phi_1(\cdot -x_n^1) \rightarrow 0$ in $L^2$. Since $w_n$ is radial, $x_n^1$ has to be bounded. We may assume $x_n^1 \rightarrow x_0$. By uniqueness of the optimizer, $\phi_1(\cdot - x_0) = e^{i\theta}Q_{\delta}$, for some $\theta, \delta$. By the definition of $w_n$ and (\ref{est: section2 u_n}),
		\begin{equation*}
			\lim_{n\rightarrow \infty}\norm{u_n - e^{i\theta}Q_{d_n^{-1}\delta}}_{L^2} = \lim_{n\rightarrow \infty}\norm{P_{\leq N_n}u_n - e^{i\theta}Q_{d_n^{-1}\delta}}_{L^2} = 0,
		\end{equation*}
		contradicting to $u_n \notin U_{\epsilon}$.
	\end{proof}
	
	As a corollary of Proposition \ref{prop: outside ground state} and Lemma \ref{lemma: stability}, we have
	\begin{corollary}\label{cor: integrability near soliton manifold}
		For $\epsilon, \delta^*>0$,
		\begin{equation*}
			\E_\mu\sqbrac{e^{\frac{1}{4}\int_{\D} |u|^4}, U_{\epsilon}(0,\delta^*) ^C}<\infty.
		\end{equation*}
	\end{corollary}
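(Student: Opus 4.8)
The plan is to derive this corollary as a purely formal consequence of the two results just established, namely Proposition \ref{prop: outside ground state} (integrability of $e^{\frac14\int_\D|u|^4}$ over $S_\gamma$) and Lemma \ref{lemma: stability} (the set inclusion $U_\epsilon(0,\delta^*)^C\subset S_\gamma$). First I would fix the parameters $\epsilon,\delta^*>0$ appearing in the statement and apply Lemma \ref{lemma: stability} with these parameters to obtain a value $\gamma=\gamma(\epsilon,\delta^*)>0$ for which
\begin{equation*}
	U_\epsilon(0,\delta^*)^C\subset S_\gamma,
\end{equation*}
where on both sides the complement is formed inside the ambient set $\brac{u\in L^2_{rad}(\D):\norm{u}_{L^2(\D)}\leq\norm{Q}_{L^2(\R^2)}}$, so that the inclusion is between subsets of one and the same space and is meaningful as stated.

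Next I would use that the integrand $e^{\frac14\int_\D|u|^4}$ is nonnegative and that the weighted expectation $\E_\mu[\,\cdot\,,A]$ is by definition $\E_\mu[\mathbbm{1}_A\,\cdot\,]$. The inclusion above gives the pointwise bound $\mathbbm{1}_{U_\epsilon(0,\delta^*)^C}\leq\mathbbm{1}_{S_\gamma}$, so by monotonicity of the integral
\begin{equation*}
	\E_\mu\sqbrac{e^{\frac{1}{4}\int_{\D}|u|^4},\,U_\epsilon(0,\delta^*)^C}\;\leq\;\E_\mu\sqbrac{e^{\frac{1}{4}\int_{\D}|u|^4},\,S_\gamma},
\end{equation*}
and the right-hand side is finite by Proposition \ref{prop: outside ground state}. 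This finishes the proof.

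I do not expect any genuine obstacle in this step: all of the analytic content — the large-deviation bound of Lemma \ref{lemma: large deviation}, the Bourgain-type dyadic slicing of Proposition \ref{prop: outside ground state}, and the concentration-compactness / bubble-decomposition argument of Lemma \ref{lemma: stability} — has already been carried out. The only point deserving a little care is bookkeeping: one must make sure the $L^2$-cutoff constraint $\norm{u}_{L^2(\D)}\leq\norm{Q}_{L^2(\R^2)}$ is imposed consistently in the definitions of $S_\gamma$ and of $U_\epsilon(0,\delta^*)$ and in the ambient space where complements are taken, so that the inclusion supplied by Lemma \ref{lemma: stability} can be used verbatim and the monotonicity step is valid.
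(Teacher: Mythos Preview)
Your proposal is correct and matches the paper's approach exactly: the paper simply states the corollary as an immediate consequence of Proposition \ref{prop: outside ground state} and Lemma \ref{lemma: stability} without giving any further argument, and your write-up spells out precisely the trivial monotonicity step that is implicit there. Your remark about consistently imposing the $L^2$-cutoff when taking complements is the only bookkeeping point, and it is already built into the definitions of $S_\gamma$ and $U_\epsilon(0,\delta^*)$ as well as into the statement of Lemma \ref{lemma: stability}.
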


	\section{Normal bundle decomposition} 
	We now move on to the integrability on the soliton neighborhood $U_\epsilon$. The goal for the remainder of the paper is to bound
	\begin{equation}\label{term: expectation on soliton neighborhood}
		\E_\mu\sqbrac{e^{\frac{1}{4}\int_{\D} |u|^4}, U_{\epsilon}(0,\delta^*)}.
	\end{equation}
	Recall the Gibbs measure $\rho$ is formally written as $e^{-H} du$. As discussed in Section \ref{section: reduce to the soliton neighborhood}, the minima of the Hamiltonian $H_{\R^2}^1$ are achieved on the soliton manifold $\mathcal{M}_{\R^2}$, the functions of form $e^{i\theta}Q_\delta$. As the scaling parameter $\delta$ tends to zero, $e^{i\theta}Q_\delta$ will essentially concentrate inside $\D$. Therefore, the restrictions of $e^{i\theta}Q_\delta$ to $\D$ almost minimize $H$, and form a ``near soliton" manifold $\mathcal{M}$ in $U_\epsilon$. The restriction of $e^{i\theta}Q_\delta$ does not lie in $H_{rad,0}(\D)$. For the spectral analysis in Section \ref{section: quadratic part}, we define the restriction, written in boldface, as\footnote{Here we use radial variable $r = |x|$.}
	\begin{equation}\label{est: restriction of Q}
		\bQ_\delta(r):=Q_\delta(r)-Q_\delta(1) = Q_\delta(r) + O(e^{-c\delta^{-1}}),
	\end{equation}
	and
	\begin{equation*}
		\mathcal{M}:=\brac{e^{i\theta}\bQ_\delta: \theta\in \mathbb{T}, \delta>0}.
	\end{equation*}
	
	To estimate $H$ on $U_\epsilon$, it is reasonable to expand it along normal directions to $\mathcal{M}$.  By this we mean that at each point $e^{i\theta}\bQ_\delta \in \mathcal{M}$, we perturb $H$ along the normal vector space
	\begin{equation}\label{def: normal space}
		V_{\theta,\delta}:=\brac{v\in L^2_{rad}(\D): \jbrac{v, -\Delta (e^{i\theta}\partial_{\delta}\bQ_{\delta})} = \jbrac{v, -\Delta (ie^{i\theta}\bQ_{\delta})} = 0}.
	\end{equation}
	Note that
	\begin{align*}
		\partial_{\theta}\pare{e^{i\theta}\bQ_\delta} &= ie^{i\theta}\bQ_{\delta},\\
		\partial_{\delta}\pare{ie^{i\theta}\bQ_{\delta}} = e^{i\theta}\partial_{\delta}\bQ_{\delta} &= e^{i\theta}\pare{\partial_{\delta}Q_{\delta} - \partial_{\delta}Q_{\delta}(1)}.
	\end{align*}
	$ie^{i\theta}\bQ_{\delta}$ and $e^{i\theta}\partial_{\delta}\bQ_{\delta}$ are two tangent vectors of $\mathcal{M}$ in $H_{rad,0}^1(\D)$. Since our inner product is real-valued,
	\begin{equation*}
		\jbrac{ie^{i\theta}\bQ_{\delta}, e^{i\theta}\partial_{\delta}\bQ_{\delta}} = \jbrac{ie^{i\theta}\bQ_{\delta}, (-\Delta)e^{i\theta}\partial_{\delta}\bQ_{\delta}}=0.
	\end{equation*}
	
	For simplicity, we only discuss the case when $\theta = 0$ (which is sufficient for our proof). Let $u=\bQ_\delta + v$ with $v\in V_{0,\delta}$,
	\begin{align*}
		H(u) = H(\bQ_\delta + v)  = H(\bQ_\delta) &+ \jbrac{-\Delta \bQ_\delta - \bQ_\delta^3, v}+ \frac{1}{2}\int_\D |\nabla v|^2 \\
		&-\jbrac{\bQ_\delta^2, \frac{1}{2}v^2+|v|^2} - \jbrac{\bQ_\delta, |v|^2v} - \frac{1}{4}\int_\D |v|^4
	\end{align*}
	
	\noindent\textbf{Constant part:} By (\ref{est: restriction of Q}), exponential decay of the ground state $Q$ and $\nabla Q$(see Lemma \ref{lemma: ground state}),
	\begin{align*}
		H(\bQ_{\delta})
		&=\frac{1}{2}\int_\D|\nabla \bQ_{\delta}|^2-\frac{1}{4}\int_\D|\bQ_{\delta}|^4\\
		&=\frac{1}{2}\int_\D|\nabla Q_{\delta}|^2-\frac{1}{4}\int_\D|Q_{\delta}|^4 + O(e^{-c\delta^{-1}})\\
		&=\delta^{-2}\pare{\frac{1}{2}\int_{|x|>\delta^{-1}}|\nabla Q|^2 - \frac{1}{4}\int_{|x|>\delta^{-1}}|Q|^4}+O(e^{-c\delta^{-1}})= O(e^{-c\delta^{-1}}).
	\end{align*}
	
	\noindent\textbf{Linear part:}  Assume $\norm{v}_{L^2(\D)}\leq 1$, which is indeed the case on $U_\epsilon$(see Remark \ref{remark: L^2 norm of v}). By (\ref{eqn: scaling ground state}),
	\begin{equation*}
		\jbrac{-\Delta \bQ_{\delta} - \bQ_{\delta}^3,v} = \jbrac{-\Delta Q_{\delta} -Q_{\delta}^3,v} + O(e^{-c\delta^{-1}}) 
		= -\delta^{-2}\jbrac{Q_{\delta},v}+O(e^{-c\delta^{-1}}).
	\end{equation*}
	
	\begin{remark}\label{remark: lagrange multiplier}
		The linear functional above is related to $dH$, the linearization of $H$ on $H^1(\mathbb{R}^2)$. Denote $M_{\R^2}(u):=\frac{1}{2}\norm{u}_{L^2(\R^2)}^2$. Since $Q_\delta$ is a minimizer of $H_{\R^2}$ conditioned on $M_{\R^2} = \frac{1}{2}\norm{Q}_{L^2(\R^2)}^2$, there exists a Lagrange multiplier $\lambda = \lambda(Q_\delta)$ such that
		\[dH_{\R^2}(Q_\delta) - \lambda dM_{\R^2}(Q_\delta) = 0,\; \mbox{that is, } -\Delta Q_{\delta} -Q_{\delta}^3 - \lambda Q_\delta =0.\]
		Using the equation of the ground state (\ref{eqn: scaling ground state}), we know $\lambda(Q_\delta)  = -\delta^{-2}$.
	\end{remark}
	
	\noindent\textbf{Higher order terms:} Applying Cauchy-Schwartz, for any small $\eta$
	\begin{align*}
		\jbrac{\bQ_{\delta},|v|^2v}= \jbrac{Q_{\delta},|v|^2v} + O(e^{-c\delta^{-1}}) \leq \eta\jbrac{Q_{\delta},|v|^2} +\tilde{C}_{\eta}\int_\D|v|^4 +O(e^{-c\delta^{-1}}).
	\end{align*}
	
	Collecting all the reductions above, we get
	\begin{equation}\label{bound: hamiltonian lower bound}
		H(u) \geq O(e^{-c\delta^{-1}}) + \frac{1}{2}\int_\D |\nabla v|^2 - B_\delta(v) - C_{\eta}\int_\D|v|^4,
	\end{equation}
	where
	\begin{equation*}
		B_\delta(v) = \delta^{-2}\jbrac{\bQ_{\delta},v} + \jbrac{\bQ_\delta^2, \frac{1}{2}v^2+(1+\eta)|v|^2}.
	\end{equation*}
	
	\subsection{Change of variable formula}
	The decomposition $u = e^{i\theta}\bQ_\delta + v$ induces assign a normal bundle structure to the soliton neighborhood $U_\epsilon$.
	
	\begin{theorem}[Normal bundle decomposition]\label{thm:decomposition}
		Given $\epsilon>0$, and dyadic number $N\in [1,\infty]$, there exists $\delta_* = \delta_*(N,\epsilon)$, and $\delta^* = \delta^*(\epsilon)$, satisfying $N^{-1} \lesssim \delta_* < \delta^* \ll 1$. Such that at any point $e^{i\bar{\theta}}\bQ_{\bar{\delta}} \in \mathcal{M}$ with $\bar{\delta} \in (\delta_*,\delta^*)$, there are a neighborhood $W$ of $(\bar{\theta}, \bar{\delta}, 0)$ in $\mathbb{T}\times \R \times V_{\bar{\theta}, \bar{\delta}}\cap E_N$, and a diffeomorphism $G: W \rightarrow G(W) \subset L^2_{rad}(\D)\cap E_N$ defined as
		\begin{align*}
			G(\theta, \delta, v) := P_{\leq N}\pare{e^{i(\bar{\theta}+\theta)}\bQ_{\bar{\delta}\delta}}+P_{V_{\bar{\theta}+\theta,\bar{\delta}\delta}\cap E_N}v.
		\end{align*}
		Its image $G(W)$ contains $\brac{u \in L^2_{rad}(\D)\cap E_N: \norm{u-P_{\leq N}e^{i\bar{\theta}}\bQ_{\bar{\delta}}}_{L^2(\D)} \leq 2\epsilon}$.
		
		In particular, for any $u \in U_\epsilon(\delta_*,\delta^*)$,  there exist $\theta$, $\delta$ and $v\in V_{\theta,\delta}\cap E_N$, such that
		\begin{align*}
			P_{\leq N}u = P_{\leq N}\pare{e^{i\theta}\bQ_{\delta}}+v.
		\end{align*} 
	\end{theorem}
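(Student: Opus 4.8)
The statement is a quantitative tubular‑neighborhood result for the near‑soliton manifold $\mathcal{M}$, so the natural tool is the quantitative inverse function theorem. The plan is to show that $G$ is a local diffeomorphism around the point of $W$ at which it takes the value $P_{\leq N}e^{i\bar\theta}\bQ_{\bar\delta}$, with the image containing an $L^2$‑ball of radius bounded below uniformly in the base scale $\bar\delta\in(\delta_*,\delta^*)$ and in $N$. Granting this, the ``in particular'' clause follows at once: if $u\in U_\epsilon(\delta_*,\delta^*)$ there are $\bar\theta\in\mathbb{T}$, $\bar\delta\in(\delta_*,\delta^*)$ with $\norm{u-e^{i\bar\theta}Q_{\bar\delta}}_{L^2(\D)}\le\epsilon$; by \eqref{est: restriction of Q} one has $\norm{Q_{\bar\delta}-\bQ_{\bar\delta}}_{L^2(\D)}=O(e^{-c\bar\delta^{-1}})<\epsilon$ once $\delta^*=\delta^*(\epsilon)$ is chosen small, and since $P_{\leq N}$ is an $L^2$‑contraction, $\norm{P_{\leq N}u-P_{\leq N}e^{i\bar\theta}\bQ_{\bar\delta}}_{L^2(\D)}<2\epsilon$, so $P_{\leq N}u\in G(W)$ and one takes $(\theta,\delta,v):=G^{-1}(P_{\leq N}u)$.

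First I would record the local geometry of $\mathcal{M}$ at $e^{i\bar\theta}\bQ_{\bar\delta}$. Its tangent plane is spanned by $T_1:=ie^{i\bar\theta}\bQ_{\bar\delta}$ and $T_2:=e^{i\bar\theta}\partial_\delta\bQ_{\bar\delta}$, both in $H^1_{rad,0}(\D)$ and, as already checked in the text, $\dot{H}^1$‑orthogonal; and $V_{\bar\theta,\bar\delta}$ is, by definition, the $L^2$‑orthogonal complement in $L^2_{rad}(\D)$ of $\text{span}\{-\Delta T_1,-\Delta T_2\}$, which on $H^1_{rad,0}(\D)$ is the $\dot{H}^1$‑orthogonal complement of $\text{span}\{T_1,T_2\}$. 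Hence $L^2_{rad}(\D)=\text{span}\{T_1,T_2\}\oplus V_{\bar\theta,\bar\delta}$ (a bounded direct sum), the coefficients $a,b$ in $w=aT_1+bT_2+v$ being recovered by pairing $w$ against $-\Delta T_1$ and $-\Delta T_2$, the $2\times2$ Gram matrix being diagonal by orthogonality. Differentiating $G$ at the marked point: the $v$‑summand is linear, so its $v$‑derivative is $P_{V_{\bar\theta,\bar\delta}\cap E_N}$, which is the identity on the domain's tangent space $V_{\bar\theta,\bar\delta}\cap E_N$, while its $(\theta,\delta)$‑derivatives vanish because they carry a factor $v=0$; the $(\theta,\delta)$‑derivatives of the first summand are $P_{\leq N}T_1$ and $\bar\delta\,P_{\leq N}T_2$. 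Here the multiplicative reparametrization $\delta\mapsto\bar\delta\delta$ is exactly what gives $\bar\delta T_2$ an $L^2$‑size $\sim1$ uniformly in $\bar\delta$ (since $\norm{\partial_\delta Q_\delta}_{L^2}\sim\delta^{-1}$ by scaling), matching $\norm{T_1}_{L^2}\sim1$; without it this direction would scale like $\bar\delta^{-1}$, destroying uniformity as $\bar\delta\to0$. Thus $dG(a,b,v)=P_{\leq N}(aT_1+b\bar\delta T_2+v)$.

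The heart of the proof is that this differential is invertible with $\norm{dG^{-1}}$ bounded uniformly in $\bar\delta\in(\delta_*,\delta^*)$ and $N$. For $N=\infty$ (so $P_{\leq N}$ is the identity, $E_N=L^2_{rad}(\D)$) this is immediate from the direct‑sum decomposition together with the scaling relations $\norm{T_1}_{\dot{H}^1}\sim\bar\delta^{-1}\sim\bar\delta\norm{T_2}_{\dot{H}^1}$ and $\norm{T_1}_{\dot{H}^2}\sim\bar\delta^{-2}\sim\bar\delta\norm{T_2}_{\dot{H}^2}$, which make the coefficient functionals $w\mapsto\jbrac{w,-\Delta T_i}/\norm{T_i}_{\dot{H}^1}^2$ bounded on $L^2$ with $\bar\delta$‑independent norm. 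For finite $N$ one must check that $P_{\leq N}$ does not spoil this; here I would use that $Q$ is smooth with all derivatives decaying exponentially (Lemma \ref{lemma: ground state} and bootstrapping \eqref{eqn: ground state}), so $\widehat{Q}$ is rapidly decreasing, and hence — via the Bessel asymptotics \eqref{est: roots of J_0}, \eqref{est: Bessel asymtotic at infity} identifying the eigenvalue index with frequency — the high‑mode tails obey $\norm{P_{\geq N}T_i}_{\dot{H}^s}/\norm{T_i}_{\dot{H}^s}=o(1)$ as $\bar\delta N\to\infty$, for $s=0,1,2$. Consequently, once $\bar\delta N\ge C_0$ the vectors $P_{\leq N}T_1,P_{\leq N}T_2$ remain linearly independent (so $V_{\bar\theta,\bar\delta}\cap E_N$ has real dimension $2N-2$, matching $\dim_{\R}E_N=2N$) and $dG$ is a uniformly small perturbation of its $N=\infty$ model, hence invertible with uniformly bounded inverse. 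This is precisely what forces $\delta_*\gtrsim N^{-1}$. The same scaling bookkeeping (always compensating a $\partial_\delta$ by a factor $\bar\delta$) bounds $D^2G$ uniformly on a fixed‑size neighborhood $W$ of the marked point, so the quantitative inverse function theorem yields a diffeomorphism $G:W\to G(W)$ whose image contains an $L^2$‑ball around $P_{\leq N}e^{i\bar\theta}\bQ_{\bar\delta}$ of radius $r_0>0$ independent of $\bar\delta$ and $N$; as $\epsilon$ is taken small, $2\epsilon<r_0$, which is the claimed inclusion.

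The main obstacle is exactly this uniform invertibility, in two separate respects: it must hold simultaneously for all base scales $\bar\delta\in(\delta_*,\delta^*)$ — handled by the multiplicative reparametrization of $\delta$, which restores the correct scaling normalization — and it must survive the truncation to $E_N$, which both forces $\delta_*\gtrsim N^{-1}$ and requires the quantitative estimate that the spectral projection $P_{\leq N}$ is essentially the identity on a scale‑$\bar\delta$ ground‑state bump. Everything else — the smooth dependence of $P_{V_{\theta,\delta}\cap E_N}$ on $(\theta,\delta)$ (valid wherever the defining $2$‑plane stays $2$‑dimensional), the $D^2G$ bounds, and the passage from the pointwise diffeomorphism to the statement covering all of $U_\epsilon(\delta_*,\delta^*)$ — is routine, if somewhat lengthy, bookkeeping with the scaling.
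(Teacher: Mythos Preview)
Your proposal is correct and follows essentially the same route as the paper: both apply a quantitative inverse function theorem, establishing uniform invertibility of $dG$ at the base point via the multiplicative reparametrization $\delta\mapsto\bar\delta\delta$ together with the constraint $\bar\delta\gtrsim N^{-1}$ for the truncation, and then a Lipschitz control of $dG$ on a fixed-size neighborhood. The paper carries out the latter by directly estimating $\norm{dG^{-1}(0,1,0)\circ dG(\theta,\delta,w)-Id}_{op}$ (its Lemma~\ref{lemma: compare dG^{-1}dG with id}) rather than through a $D^2G$ bound, but this is only a cosmetic difference.
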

	We postpone the proof to Section \ref{section: normal bundle decomposition}.
	\begin{remark}
		Although the decomposition happens in $L^2_{rad}(\D)$, the normal vector space $V_{\theta,\delta}$ in (\ref{def: normal space}) is actually with respect to $\dot{H}^1$ inner product. We make this choice because the free Gaussian measure $\mu$ is homogeneous on its Cameron-Martin space $H^1_{rad,0}(\D)$: for any vector space $\dot{H}^1$-decomposition $H^1_{rad,0}(\D) = V_1 \oplus_{\dot{H}^1} V_2$, we have the splitting $\mu = \mu_{V_1} \otimes \mu_{V_2}$, where $\mu_{V_i}$ are the free Gaussians on $V_i$.
	\end{remark}
	
	For integration on a normal bundle, we refer the following change of variable formula.
	\begin{lemma}[Change of variable, Lemma 6.11 in \cite{oh2021optimal}]\label{lemma: change of variable}
		Let $\mathcal{M}^d\subset \R^n$ be a closed submanifold and $\mathcal{N}$ be its normal bundle. Suppose there is a decomposition (a diffeomorphism) mapping a neighborhood $U$ of $\mathcal{M}$ to $\mathcal{N}$ via
		\begin{align*}
			u = x+v, \; u\in U,\, x\in \mathcal{M},\, v\in T_x^{\perp}\mathcal{M}.
		\end{align*}
		Let $V = \{(x,v): x+v\in U\}$, then for any measurable function $f: \R^n \rightarrow \R$,
		\begin{equation*}
			\int_U f(u)du \lesssim \int_{\mathcal{M}}\int_{T_x^{\perp}\mathcal{M}}f(x+v)\mathbbm{1}_{V}(x,v)dvd\sigma(x)
		\end{equation*}
		the measure $\sigma(x)$ is defined as
		\begin{equation}\label{eqn:section3 d_sigma}
			d\sigma(x):=\pare{1+\sup_{k=1,\cdots,d}|\nabla_x t_k(x)|^d}d\omega(x),
		\end{equation}
		where $d\omega$ is the surface measure on $\mathcal{M}$ and $\{t_k(x)\}_{k=1}^d$ is an orthonormal frame of $\mathcal{M}$.
	\end{lemma}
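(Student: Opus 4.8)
The statement to prove is the change of variables formula (Lemma~\ref{lemma: change of variable}). Since the paper explicitly attributes this to Oh-Sosoe-Tolomeo, my plan is to reconstruct the standard proof via the coarea/tube-formula machinery from differential geometry.

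\medskip

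\textbf{Plan.} The key observation is that the decomposition map $\Phi(x,v) = x+v$ from (an open subset of) the normal bundle $\mathcal{N} = \{(x,v): x\in\mathcal{M},\ v\in T_x^\perp\mathcal{M}\}$ into $\R^n$ is a diffeomorphism onto $U$ by hypothesis, so the ordinary change of variables gives
\begin{equation*}
	\int_U f(u)\,du = \int_{V} f(x+v)\,|{\det D\Phi(x,v)}|\; d\mathrm{vol}_{\mathcal{N}}(x,v),
\end{equation*}
where $d\mathrm{vol}_{\mathcal{N}}$ is the natural Riemannian volume on $\mathcal{N}$, which disintegrates as $d\mathrm{vol}_{\mathcal{N}}(x,v) = dv\, d\omega(x)$ with $dv$ Lebesgue measure on the fiber $T_x^\perp\mathcal{M}$ and $d\omega$ the surface measure on $\mathcal{M}$. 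So the whole content is the pointwise bound
\begin{equation*}
	|{\det D\Phi(x,v)}| \lesssim 1 + \sup_{k=1,\dots,d}|\nabla_x t_k(x)|^d.
\end{equation*}
First I would fix local coordinates: choose an orthonormal frame $\{t_1(x),\dots,t_d(x)\}$ of $T\mathcal{M}$ near $x$, extended to an orthonormal frame together with $\{n_1,\dots,n_{n-d}\}$ spanning $T_x^\perp\mathcal{M}$ (the latter may be taken locally constant in a fixed orthonormal basis of $\R^n$, or moved by the normal connection; either works). In the coordinates $(x_1,\dots,x_d)$ on $\mathcal{M}$ and $(v_1,\dots,v_{n-d})$ on the fiber, write $u = x + \sum_j v_j n_j$. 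Differentiating: $\partial_{v_j}\Phi = n_j$ (the unit normals, mutually orthonormal), and $\partial_{x_i}\Phi = t_i(x) + \sum_j v_j\,\partial_{x_i} n_j$.

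\medskip

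\textbf{Main computation.} Now expand the determinant of the $n\times n$ matrix whose columns are $\{\partial_{x_i}\Phi\}_{i=1}^d \cup \{n_j\}_{j=1}^{n-d}$. Since $\{n_j\}$ are orthonormal and orthogonal to each $t_i$, one can subtract from each $\partial_{x_i}\Phi$ its projection onto $\mathrm{span}\{n_j\}$ without changing the determinant; what remains is the matrix with columns $t_i(x) + (\text{component of } \sum_j v_j\partial_{x_i}n_j \text{ tangent to }\mathcal{M})$ stacked against the orthonormal normal block. Because the normal block is orthonormal and orthogonal to the tangent space, $|\det D\Phi| = |\det_{d\times d}(\text{Gram-type } d\times d \text{ block})|$, and expanding multilinearly in the $d$ tangent columns $t_i + (\text{correction of size } \lesssim |v|\,|\nabla_x n|)$ gives
\begin{equation*}
	|{\det D\Phi(x,v)}| \leq \prod_{i=1}^d \bigl(|t_i(x)| + C|v|\,|\nabla_x t_i(x)|\bigr) \lesssim 1 + \sup_k |\nabla_x t_k(x)|^d,
\end{equation*}
using $|t_i| = 1$, $|v| \lesssim 1$ on $U$ (the tube radius is bounded), and $|\nabla_x n_j| \lesssim |\nabla_x t_k|$ since the second fundamental form and normal connection are controlled by the derivatives of the tangent frame (Weingarten). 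Restricting the integral to $V$ via the indicator $\mathbbm{1}_V$ is automatic since $\Phi$ is a bijection $V\to U$. That finishes the estimate.

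\medskip

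\textbf{Expected obstacle.} The routine-but-delicate point is handling the frame bundle correctly: the determinant factorization is basis-independent, but to extract the clean bound in terms of $|\nabla_x t_k|$ one must argue that the normal-direction derivatives $\partial_{x_i} n_j$ contribute at worst the same order as $\partial_{x_i} t_k$. This is where one invokes that $\mathcal{M}$ is $C^2$ (or smooth) and that all second-order geometric quantities (second fundamental form, mean curvature, normal connection coefficients) are expressible through first derivatives of an orthonormal tangent frame. A secondary subtlety is that the formula is stated as an inequality ($\lesssim$) rather than an equality — this is deliberate, since one only needs the upper bound $|\det D\Phi|\lesssim 1+\sup_k|\nabla_x t_k|^d$, which sidesteps having to track signs or lower bounds on the Jacobian, and also absorbs the implicit constant from the bounded tube radius. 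In our infinite-dimensional application the lemma is applied only after projecting to the finite-dimensional space $E_N$ (Theorem~\ref{thm:decomposition}), so no genuinely infinite-dimensional version is needed and the classical argument above suffices verbatim.
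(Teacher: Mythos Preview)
The paper does not actually prove this lemma: it is quoted verbatim as Lemma~6.11 from Oh--Sosoe--Tolomeo and used as a black box, so there is no in-paper argument to compare against. Your reconstruction via the Jacobian of $\Phi(x,v)=x+v$ and the Hadamard-type bound on the $d\times d$ tangent block is the standard tube-formula computation and is correct in outline.

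One point worth tightening: your final chain uses $|v|\lesssim 1$ to pass from $\prod_i(1+C|v|\,|\nabla_x t_i|)$ to $1+\sup_k|\nabla_x t_k|^d$. The lemma as stated does not impose a uniform bound on the fiber variable, so strictly speaking the implicit constant in $\lesssim$ depends on $\sup_{(x,v)\in V}|v|$. This is harmless for the paper's application (where $V$ sits inside an $\epsilon$-tube and the constant is absorbed), and it is presumably how the original OST statement is meant, but you should flag that dependence explicitly rather than bury it in the parenthetical. Also, your claim that $|\nabla_x n_j|\lesssim|\nabla_x t_k|$ ``by Weingarten'' is slightly imprecise: what you actually use, after reducing to the $d\times d$ tangent block, is only the tangential component $\langle t_k,\partial_{x_i}n_j\rangle=-\langle\partial_{x_i}t_k,n_j\rangle$, which is indeed bounded by $|\nabla_x t_k|$; the normal-connection part of $\partial_{x_i}n_j$ never enters the determinant once you have subtracted the normal projections. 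With those two clarifications the argument is complete.
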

	
	Combing with the lower bound (\ref{bound: hamiltonian lower bound}), Lemma \ref{lemma: change of variable} formally bounds the $\mu$-expectation (\ref{term: expectation on soliton neighborhood}) as
	\begin{equation*}
		\int_{\mathcal{M}}\int_{V_{\theta,\delta}}e^{B_\delta(v)+C_\eta\int_\D|v|^4}e^{-\frac{1}{2}\int_\D |\nabla v|^2}dvd\sigma(\theta,\delta).
	\end{equation*}
	Note that ``$e^{-\frac{1}{2}\int_\D |\nabla v|^2}dv = d\mu_{V_{\theta,\delta}}$". $\mu_{V_{\theta,\delta}}$, the free Gaussian measure on $V_{\theta,\delta}$ can be defined as following: Let $\brac{h_n}$ be a $\dot{H}^1$ orthonormal basis in the dense subspace $V_{\theta,\delta}\cap H^1_{rad,0}(\D)$, then $\mu_{V_{\theta,\delta}}$ is the law of random variable $v(\omega) = \sum_n g_n(\omega)h_n$, where $\{g_n\}$ is a sequence of independent standard complex-valued Gaussians. This construction coincides with $\mu$, where $\brac{\frac{e_n}{z_n}}$ is the orthonormal basis in $H^1_{rad,0}(\D)$. With this argument in mind, we expect the following estimate.
	
	\begin{proposition}\label{prop: change of variable}
		Given $\delta^*>0$,
		\begin{equation*}
			\E_\mu\sqbrac{e^{\frac{1}{4}\int_{\D} |u|^4}, U_{\epsilon}(0,\delta^*)} \lesssim \int_0^{\delta^*}\pare{\int_{V_{0,\delta}}e^{B_\delta(v)+C_\eta\int_\D|v|^4}d\mu_{V_{0,\delta}}(v)}\delta^{-5}d\delta.
		\end{equation*}
		where $\mu_{V_{0,\delta}}$ is the free Gaussian measure on $V_{0,\delta}$.
	\end{proposition}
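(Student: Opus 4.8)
\emph{Overview.} The plan is to pass from the infinite-dimensional expectation to a finite-dimensional one by dyadic truncation, apply the finite-dimensional normal bundle decomposition (Theorem~\ref{thm:decomposition}) together with the change of variables formula (Lemma~\ref{lemma: change of variable}) on each $E_N$, control the Jacobian measure $d\sigma$ explicitly in terms of $\theta$ and $\delta$, use the Hamiltonian lower bound \eqref{bound: hamiltonian lower bound} to replace $e^{\frac14\int|u|^4}$ on $U_\epsilon$ by $e^{-H(\bQ_\delta)}e^{-\frac12\int|\nabla v|^2 + B_\delta(v)+C_\eta\int|v|^4}$, and finally recognize $e^{-\frac12\int|\nabla v|^2}dv$ as the free Gaussian $\mu_{V_{0,\delta}}$ and pass $N\to\infty$ by monotone convergence. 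The restriction to $\theta = 0$ on the right-hand side is harmless because the integrand is invariant under the phase rotation $v\mapsto e^{i\theta}v$, $B_{\theta,\delta}(e^{i\theta}v) = B_\delta(v)$, so the $\theta$-integral only contributes the length $|\mathbb T| = 2\pi$ of the circle.

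\emph{Step 1: truncation.} For each dyadic $N$, write $u = P_{\leq N}u + P_{\geq 2N}u$ and note $\int_\D|u|^4 \leq (1+\epsilon)\int_\D|P_{\leq N}u|^4 + C_\epsilon \int_\D|P_{\geq 2N}u|^4$ as in \eqref{ineq:section2 holder}. Since $P_{\leq N}u$ and $P_{\geq 2N}u$ are $\mu$-independent, it suffices (after a Cauchy--Schwarz or Hölder split and absorbing the tail factor $\E_\mu[e^{C_\epsilon\int|P_{\geq 2N}u|^4}]$, which is finite uniformly in $N$ by the same Fernique-type argument used in the proof of Proposition~\ref{prop: outside ground state}) to bound $\E_\mu[e^{\frac{1+\epsilon}{4}\int_\D |P_{\leq N}u|^4}, U_\epsilon(0,\delta^*)]$, an integral over the $2N$-dimensional space $E_N$ against the Gaussian $\prod_{n\leq N}\frac{z_n^2}{\pi}e^{-\frac{z_n^2}{2}|u_n|^2}\,d^2u_n = e^{-\frac12\int|\nabla P_{\leq N}u|^2}\,du$ (up to a normalizing constant which disappears once we renormalize against the same Gaussian on the fibers).

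\emph{Step 2: change of variables on $E_N$.} On $U_\epsilon(\delta_*,\delta^*)\cap E_N$ we apply Theorem~\ref{thm:decomposition}: every $P_{\leq N}u$ in this set is $G(\theta,\delta,v) = P_{\leq N}(e^{i\theta}\bQ_\delta) + P_{V_{\theta,\delta}\cap E_N}v$ for unique $(\theta,\delta,v)$. The submanifold $\mathcal M\cap E_N$ is two-dimensional with tangent frame proportional to $ie^{i\theta}\bQ_\delta$ and $\partial_\delta(e^{i\theta}\bQ_\delta)$; Lemma~\ref{lemma: change of variable} gives $d\sigma \lesssim (1 + \sup_k|\nabla_x t_k|^2)\,d\omega$, and $d\omega$ in the $(\theta,\delta)$-parametrization equals $\|ie^{i\theta}\bQ_\delta\|_{L^2}\,\|P^\perp\partial_\delta(e^{i\theta}\bQ_\delta)\|_{L^2}\,d\theta\,d\delta$. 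A scaling computation ($\bQ_\delta(r) = \delta^{-1}Q(\delta^{-1}r) + O(e^{-c\delta^{-1}})$) yields $\|\bQ_\delta\|_{L^2}\sim 1$, $\|\partial_\delta\bQ_\delta\|_{L^2}\sim \delta^{-1}$, and the $\dot H^1$-curvature terms $|\nabla_x t_k|$ contribute at worst another power of $\delta^{-1}$ each, so $d\sigma \lesssim \delta^{-4}\,d\theta\,d\delta$; combined with the Jacobian of $G$ in the $v$-directions (which is $1 + O(\epsilon)$ since the $V_{\theta,\delta}\cap E_N$ vary smoothly and $\delta^{-1}\gtrsim 1$ is absorbed) and the $O(\delta^{-1})$ from the non-orthonormal $\partial_\delta$-direction already counted, one arrives at the density $\delta^{-5}\,d\delta$ after integrating out $\theta$ over the circle. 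On $U_\epsilon(0,\delta_*)\cap E_N$ — the very thin slices with $\delta<\delta_* \sim N^{-1}$ — one handles things crudely: here $\bQ_\delta$ is essentially a bump of width $\delta < N^{-1}$, so its $E_N$-truncation has $\dot H^1$ norm $O(1)$ and $L^4$ norm $o(1)$, placing $P_{\leq N}u$ inside $S_\gamma\cap E_N$; the contribution is then bounded by Proposition~\ref{prop: outside ground state}'s argument and is negligible as $N\to\infty$.

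\emph{Step 3: fiberwise Gaussian and passage to the limit.} With $P_{\leq N}u = P_{\leq N}(e^{i\theta}\bQ_\delta)+v$, the bound \eqref{bound: hamiltonian lower bound} reads $\frac{1+\epsilon}{4}\int_\D|P_{\leq N}u|^4 \leq -H(P_{\leq N}\bQ_\delta) + \frac12\int|\nabla v|^2 + B_\delta(v) + C_\eta\int|v|^4 + O(e^{-c\delta^{-1}})$ on $U_\epsilon$ (absorbing the extra $(1+\epsilon)$ factors into $\eta$, $C_\eta$ as in Section~5, and using $H(\bQ_\delta) = O(e^{-c\delta^{-1}})$). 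The Gaussian weight $e^{-\frac12\int|\nabla P_{\leq N}u|^2}\,du$ factors as $e^{-\frac12\int|\nabla \bQ_\delta|^2}\cdot(\text{cross term})\cdot e^{-\frac12\int|\nabla v|^2}\,dv$ over the $\dot H^1$-orthogonal splitting $E_N = (T_{\theta,\delta}\mathcal M\cap E_N)\oplus(V_{\theta,\delta}\cap E_N)$; the cross term and the tangential Gaussian factor are $O(1)$ uniformly (they only see $\bQ_\delta$, whose $\dot H^1$ norm is bounded), so they get absorbed into the implicit constant. This leaves exactly $\int_0^{\delta^*}\big(\int_{V_{0,\delta}\cap E_N} e^{B_\delta(v)+C_\eta\int|v|^4}\,d\mu_{V_{0,\delta}}^{(N)}(v)\big)\delta^{-5}\,d\delta$, where $\mu_{V_{0,\delta}}^{(N)}$ is the free Gaussian on the finite-dimensional space $V_{0,\delta}\cap E_N$. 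Since the integrand is nonnegative and $\mu_{V_{0,\delta}}^{(N)}\to\mu_{V_{0,\delta}}$ (the truncations of a common orthonormal basis expansion), monotone convergence — applied inside the $\delta$-integral, then once more in $N$ — upgrades the bound to the stated inequality with $\mu_{V_{0,\delta}}$.

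\emph{Main obstacle.} The delicate point is Step~2: making the change of variables rigorous \emph{simultaneously at every $N$} with the Jacobian bounds \emph{uniform in $N$}. The map $G$ and the fiber spaces $V_{\theta,\delta}\cap E_N$ depend on $N$, and one must check that the $\dot H^1$-orthogonal projection $P_{V_{\theta,\delta}\cap E_N}$ restricted to $E_N$ has Jacobian determinant bounded above and below uniformly — this is where the normal-bundle structure of Theorem~\ref{thm:decomposition} (existence of $\delta_* \gtrsim N^{-1}$ and smallness of $\delta^*$) is essential, together with the fact that $d\sigma$ only involves intrinsic geometry of $\mathcal M$, which is $N$-independent up to $O(e^{-c\delta^{-1}})$ errors. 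The bookkeeping of the powers of $\delta^{-1}$ entering $d\sigma$ (from $\|\partial_\delta\bQ_\delta\|_{L^2}$ and from $|\nabla_x t_k|$) to arrive at precisely $\delta^{-5}$ is the one genuinely computational part, but it is forced by scaling: every $\delta$-derivative of the unit-$L^2$-normalized bump $\bQ_\delta$ costs $\delta^{-1}$, the surface element needs one such derivative, and the curvature term in \eqref{eqn:section3 d_sigma} with $d = 2$ contributes $|\nabla_x t_k|^2 \sim \delta^{-2}$, while the remaining $\delta^{-2}$ accounts for the change of the $L^2$-metric on the fibers versus the intended $\dot H^1$ Gaussian — all told $\delta^{-1}\cdot\delta^{-2}\cdot\delta^{-2} = \delta^{-5}$.
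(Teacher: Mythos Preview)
Your overall strategy---truncate to $E_N$, apply the finite-dimensional change of variables, identify the fiber Gaussian, pass to the limit---is exactly the paper's, but two steps contain genuine errors.

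\textbf{Step 1 fails.} The expectation $\E_\mu\bigl[e^{C_\epsilon\int_\D |P_{\geq 2N}u|^4}\bigr]$ is \emph{infinite} for every $N$ and every $C_\epsilon>0$: $\|P_{\geq 2N}u\|_{L^4}$ is a nondegenerate Gaussian seminorm, so Fernique only gives $e^{-\alpha t^2}$ tail decay, which cannot compete with $e^{Ct^4}$. (In Proposition~\ref{prop: outside ground state} Fernique is used only to bound $\mu(F_{2N})$, never an $e^{C\|\cdot\|_4^4}$ moment.) Thus your H\"older split cannot work. The paper sidesteps this by replacing $u$ with $P_{\leq N}u$ everywhere at once---in the exponent \emph{and} in $\mathbbm{1}_{U_\epsilon}$---so that no high-frequency tail factor ever needs an exponential moment; the limit $N\to\infty$ is then taken after the change of variables has already produced the $\delta$-integral on the right.

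\textbf{Step 2 uses the wrong metric.} The Lebesgue measure on $E_N$ in which you integrate is $\prod_{n\leq N}dg_n$ with $g_n=z_n\langle u,e_n\rangle$; since $\{e_n/z_n\}$ is $\dot H^1$-orthonormal, this is the $\dot H^1$ volume form. Lemma~\ref{lemma: change of variable} must therefore be applied in the $\dot H^1$ Euclidean structure: $d\omega = \|i\bQ_\delta\|_{\dot H^1}\,\|\partial_\delta\bQ_\delta\|_{\dot H^1}\,d\theta\,d\delta\sim\delta^{-1}\cdot\delta^{-2}\,d\theta\,d\delta=\delta^{-3}\,d\theta\,d\delta$, and the curvature factor in \eqref{eqn:section3 d_sigma} is $\sup_k\|\partial_{(\theta,\delta)}t_k\|_{\dot H^1}^2\sim\delta^{-2}$, giving $d\sigma\lesssim\delta^{-5}\,d\theta\,d\delta$ directly. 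Your $L^2$ computation gives the wrong $d\omega\sim\delta^{-1}$, and the compensating ``$\delta^{-2}$ from the change of $L^2$-metric on the fibers versus the $\dot H^1$ Gaussian'' is fictitious: the fiber measure $dv$ already \emph{is} the $\dot H^1$ Lebesgue measure on $V_{\theta,\delta}\cap E_N$, so $(2\pi)^{-(N-1)}e^{-\frac12\int_\D|\nabla v|^2}\,dv$ is exactly $d\mu_{V_{\theta,\delta}}$ with no Jacobian correction.
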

	\begin{proof}
		To simplify the notation, we denote $U_\epsilon$ for $U_{\epsilon}(0,\delta^*)$. Up to some mollification of the indicator function $\mathbbm{1}_{U_{\epsilon}}$, the Dominated Convergence Theorem gives
		\begin{align*}
			\E_\mu\sqbrac{e^{\frac{1}{4}\int_{\D} |u|^4}, U_{\epsilon}(0,\delta^*)} = \lim_{N\rightarrow \infty} \int \mathbbm{1}_{U_{\epsilon}}(P_{\leq N}u)e^{\frac{1}{4}\int_\D|P_{\leq N}u|^4}d\mu(u).
		\end{align*}
		Since $\mu$ is the law of (\ref{def: series u precise}),
		\begin{align*}
			\int \mathbbm{1}_{U_{\epsilon}}(P_{\leq N}u)e^{\frac{1}{4}\int_\D|P_{\leq N}u|^4}d\mu(u) = \int_{E_N\cap U_\epsilon} e^{\frac{1}{4}\int_\D|\sum_{n\leq N} \frac{g_n}{z_n}e_n|^4}\prod_{n\leq N}\frac{1}{2\pi}e^{-\frac{1}{2}g_n^2}dg_n.
		\end{align*}
		Using decomposition Theorem \ref{thm:decomposition} and change of variable Lemma \ref{lemma: change of variable}, the integral above is bounded by\footnote{To simplify the notation, we hide the restriction $\delta\geq \delta_*\gtrsim N^{-1}$ for $P_{\leq N}\mathcal{M}$, enforced by Theorem \ref{thm:decomposition}.}.
		\begin{align*}
			\int_{P_{\leq N}\mathcal{M}}\int_{V_{\theta,\delta}\cap E_N} (2\pi)^{-\frac{N}{2}}e^{-H(P_{\leq N}e^{i\theta}\bQ_{\delta}+v)}dv d\sigma(\theta,\delta).
		\end{align*}
		Note that on $V_{\theta,\delta}\cap E_N$, $d\mu_{V_{\theta,\delta}} = (2\pi)^{-\frac{N-2}{2}}e^{-\frac{1}{2}\int_\D |P_{\leq N}v|^2} dv$. The integral above can be written as
		\begin{align*}
			\int_{P_{\leq N}\mathcal{M}}\int_{V_{\theta,\delta}}f(P_{\leq N}e^{i\theta}\bQ_{\delta},P_{\leq N}v)d\mu_{V_{\theta,\delta}}(v)d\sigma(\theta,\delta).
		\end{align*}
		where
		\begin{align*}
			f(P_{\leq N}e^{i\theta}\bQ_{\delta} &,P_{\leq N}v) \\
			&= (2\pi)^{-1} \exp\pare{-H\pare{P_{\leq N}(e^{i\theta}\bQ_{\delta}+v)} + \frac{1}{2}\int_\D |P_{\leq N}v|^2 }\\
			&=(2\pi)^{-1} \exp\pare{\frac{1}{4}\int_{\D}|P_{\leq N}(e^{i\theta}\bQ_{\delta}+v)|^4-\frac{1}{2}\int_{\D}|\nabla P_{\leq N}e^{i\theta}\bQ_{\delta}|^2+\jbrac{\Delta P_{\leq N}e^{i\theta}\bQ_{\delta}, v}}.
		\end{align*}	
		
		By (\ref{eqn:section3 d_sigma}), 
		\begin{equation*}
			d\sigma(\theta,\delta) = \pare{1+\sup_{k=1,2}\norm{\partial_{\theta}t_k}_{\dot{H}^1}^2+\norm{\partial_{\delta}t_k}_{\dot{H}^1}^2}d\omega(\theta,\delta).
		\end{equation*}
		Here $t_1$ is $\dot{H}^1-$normalized $P_{\leq N}ie^{i\theta}\bQ_{\delta}$ whereas $t_2$ is $\dot{H}^1-$normalized $P_{\leq N}e^{i\theta}\partial_{\delta}\bQ_{\delta}$. By scaling argument, as $\delta \rightarrow 0$,
		\begin{equation}\label{est: H^k norm of Q}
			\begin{aligned}
				\norm{Q_{\delta}}_{H^k(\D)} &= O(\delta^{-k}),\\
				\norm{\partial_{\delta}Q_{\delta}}_{H^k(\D)} &= O(\delta^{-k-1}).
			\end{aligned}
		\end{equation}
		Therefore,
		\begin{align*}
			\norm{\partial_{\theta}t_k}_{\dot{H}^1}^2=O(1), \; \norm{\partial_{\delta}t_k}_{\dot{H}^1}^2=O(\delta^{-2}).
		\end{align*}
		For the surface measure, note that vectors $\frac{\partial}{\partial \theta}, \frac{\partial}{\partial \delta}$ are orthogonal in $\dot{H}^1$. Hence
		\begin{align*}
			d\omega(\theta,\delta) = \norm{P_{\leq N}ie^{i\theta}\bQ_{\delta}}_{\dot{H}^1} \norm{P_{\leq N}e^{i\theta}\partial_{\delta}\bQ_{\delta}}_{\dot{H}^1} \lesssim  \delta^{-3} d\delta d\theta.
		\end{align*}
		Taking $N\rightarrow \infty$, the Dominated Convergence Theorem yields
		\begin{equation*}
			\int_{\mathbb{T}}\int_{\delta_*}^{\delta^*}\int_{V_{\theta,\delta}}f(P_{\leq N}e^{i\theta}\bQ_{\delta},P_{\leq N}v)d\mu_{V_{\theta,\delta}}(v)\delta^{-3} d\delta d\theta \rightarrow \int_{\mathbb{T}}\int_0^{\delta^*}\int_{V_{\theta,\delta}}f(e^{i\theta}\bQ_{\delta}, v)d\mu_{V_{\theta,\delta}}(v)\delta^{-3} d\delta d\theta.
		\end{equation*}
		
		Observe that $f(e^{i\theta}\bQ_{\delta}, v) = f(\bQ_{\delta}, e^{-i\theta}v)$ and $e^{-i\theta}V_{\theta, \delta} = V_{0, \delta}$. Therefore,
		\begin{equation*}
			\int_{\mathbb{T}}\int_0^{\delta^*}\int_{V_{\theta,\delta}}f(e^{i\theta}\bQ_{\delta}, v)d\mu_{V_{\theta,\delta}}(v)\delta^{-3} d\delta d\theta = 2\pi \int_0^{\delta^*}\int_{V_{0,\delta}}f(\bQ_{\delta}, v)d\mu_{V_{0,\delta}}(v)\delta^{-3} d\delta.
		\end{equation*}
		
		Using (\ref{bound: hamiltonian lower bound}),
		\begin{equation*}
			f(\bQ_{\delta}, v) = (2\pi)^{-1} e^{-H\pare{\bQ_{\delta}+v} + \frac{1}{2}\int_\D v|^2 } \leq (2\pi)^{-1}e^{O(e^{-c\delta^{-1}}) + B_\delta(v) + C_{\eta}\int_\D|v|^4}.
		\end{equation*}
		This gives the desired bound.
	\end{proof}
	\begin{remark}
		If we keep the constraint $U_\epsilon$ in the proof, the bound in Proposition \ref{prop: change of variable} can be strengthened to
		\begin{equation*}
			\int_0^{\delta^*}\pare{\int_{V_{0,\delta}}\mathbbm{1}_{U_\epsilon}(\bQ_\delta+v) e^{B_\delta(v)+C_\eta\int_\D|v|^4}d\mu_{V_{0,\delta}}(v)}\delta^{-5}d\delta.
		\end{equation*}
		In particular, by Theorem \ref{thm:decomposition}, this gives the bound
		\begin{equation}\label{est: strengthened change of variable}
			\int_0^{\delta^*}\pare{\int_{V_{0,\delta}, \norm{v}_{L^2(\D)}\leq \epsilon} e^{B_\delta(v)+C_\eta\int_\D|v|^4}d\mu_{V_{0,\delta}}(v)}\delta^{-5}d\delta.
		\end{equation}
	\end{remark}
	
	\subsection{Higher order term}
	Following \ref{est: strengthened change of variable}, it is sufficient to estimate
	\begin{equation*}
		\int_{V_{0,\delta}, \norm{v}_{L^2(\D)}\leq \epsilon} e^{B_\delta(v)+C_\eta\int_\D|v|^4}d\mu_{V_{0,\delta}}(v).
	\end{equation*}
	Applying H\"{o}lder's inequality, we divide it into two part: quadratic term, $B_\delta$ part and higher order term, $\int_\D |v|^4$ part.
	\begin{equation*}
		\pare{\int_{V_{0,\delta}}e^{(1+\eta)B_\delta(v)}d\mu_{V_{0,\delta}}(v)}^{\frac{1}{1+\eta}} \pare{\int_{\norm{v}_{L^2(\D)}\leq \epsilon}e^{C_\eta\int_\D|v|^4}d\mu_{V_{0,\delta}}(v)}^{\frac{\eta}{1+\eta}}.
	\end{equation*}
	In this subsection, we deal with the higher order term part and leave the quadratic part to Section \ref{section: quadratic part}. 
	
	\begin{lemma}\label{lemma: integrability for higher order term}
		\begin{equation*}
			\int_{\norm{v}_{L^2(\D)}\leq \epsilon}e^{C_{\eta}\int_{\D}|v|^4}d\mu_{V_{0,\delta}}(v)  < \infty
		\end{equation*}
	\end{lemma}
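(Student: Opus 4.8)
The plan is to adapt the Bourgain-type argument from the proof of Proposition~\ref{prop: outside ground state}, the essential difference being that the gain in the Gagliardo--Nirenberg inequality now comes from the smallness of the cutoff radius $\epsilon$ rather than from the parameter $\gamma$ defining $S_\gamma$. Since $\delta$ is fixed and only finiteness is required, all implicit constants below may depend on $\delta$ (and on $\eta$, $\epsilon$). The first preliminary step is a large deviation estimate for $\mu_{V_{0,\delta}}$ parallel to Lemma~\ref{lemma: large deviation}, namely $\E_{\mu_{V_{0,\delta}}}[\norm{P_{\geq N}v}_{L^4(\D)}] \lesssim (\log N)^{1/4} N^{-1/2}$. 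As noted in the remark following Theorem~\ref{thm:decomposition}, $\mu_{V_{0,\delta}}$ is the pushforward of $\mu$ under the $\dot H^1$-orthogonal projection onto $V_{0,\delta}$, and the complementary projection has rank two with range $\mathrm{span}_\R\{i\bQ_\delta,\partial_\delta\bQ_\delta\}$. Both $i\bQ_\delta$ and $\partial_\delta\bQ_\delta$ are smooth on $\overline{\D}$ and vanish on $\partial\D$, so the $L^4$-norms of their high-frequency parts decay in $N$ far faster than $N^{-1/2}$; together with Lemma~\ref{lemma: large deviation} applied to $u\sim\mu$, this gives the estimate for $\mu_{V_{0,\delta}}$.

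Next I record two deterministic inputs. First, for $v$ with $\norm{v}_{L^2(\D)}\leq\epsilon$, extending $P_{\leq N}v$ by zero to an element of $H^1(\R^2)$ and applying the sharp GNS inequality (Proposition~\ref{prop: GNS inequality}) at $p=4$ gives $\tfrac14\int_\D|P_{\leq N}v|^4 \leq \tfrac{\epsilon^2}{2}\norm{Q}_{L^2(\R^2)}^{-2}\norm{\nabla P_{\leq N}v}_{L^2(\D)}^2$; the factor $\epsilon^2$ is the source of the gain. Second, because the eigenfunctions $e_n$ are mutually orthogonal in $\dot H^1$ as well as in $L^2$, $P_{\leq N}$ is also the $\dot H^1$-orthogonal projection onto $E_N$, so the quadratic form $v\mapsto\norm{\nabla P_{\leq N}v}_{L^2}^2$ on $V_{0,\delta}$, written in a $\dot H^1$-orthonormal basis of $V_{0,\delta}$, is positive semidefinite of rank $O(N)$ and operator norm $\leq 1$; hence $\E_{\mu_{V_{0,\delta}}}[\exp(a\norm{\nabla P_{\leq N}v}_{L^2}^2)] \leq e^{O(N)}$ for every fixed $a<\tfrac12$, exactly as in \eqref{eqn: exponential gaussian}.

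With these in hand, the slicing argument proceeds as in Proposition~\ref{prop: outside ground state}. For dyadic $N$ define $F_N$ by the conditions $\norm{P_{\geq M}v}_{L^4}>(\log M)^{3/4}$ for all $M<N$ and $\norm{P_{\geq N}v}_{L^4}\leq(\log N)^{3/4}$; Fernique's theorem and the large deviation estimate yield $\mu_{V_{0,\delta}}(F_{2N})\leq e^{-c(\log N)N}$. On $\{\norm{v}_{L^2(\D)}\leq\epsilon\}\cap F_{2N}$, split $\int_\D|v|^4$ by Young's inequality into a low-frequency part, bounded through the GNS estimate by $a\norm{\nabla P_{\leq N}v}_{L^2}^2$ with $a = 2C_\eta(1+\epsilon_0)\epsilon^2\norm{Q}_{L^2(\R^2)}^{-2}$ (for a small auxiliary $\epsilon_0>0$), and a high-frequency part, bounded on $F_{2N}$ by $C_{\epsilon_0}(\log N)^3$. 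Choosing $\epsilon$ small enough (depending on $C_\eta$) that $a<\tfrac12$ survives a small H\"older loss, and applying H\"older's inequality together with the Gaussian moment bound, the contribution of $\{\norm{v}_{L^2(\D)}\leq\epsilon\}\cap F_{2N}$ is bounded by $e^{C(\log N)^3}\,e^{O(N)}\,e^{-c'(\log N)N}$, which is summable over dyadic $N$; the leftover set $\bigcap_N\{\norm{P_{\geq N}v}_{L^4}>(\log N)^{3/4}\}$ is $\mu_{V_{0,\delta}}$-null by the large deviation estimate.

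The step I expect to be the main obstacle is the first one, transferring the large deviation estimate from $\mu$ to $\mu_{V_{0,\delta}}$ and making the rank-two perturbation argument precise, together with the companion observation that $P_{\leq N}$ is simultaneously $L^2$- and $\dot H^1$-orthogonal. A secondary point to check is that the smallness of $\epsilon$ demanded here (in terms of $C_\eta$, hence of $\eta$) is consistent with the choices made elsewhere in the paper; it is, since $C_\eta$ depends only on $\eta$ while $\epsilon$, the radius of the soliton neighborhood, may be shrunk at will, the complement $U_\epsilon^C$ being handled for every $\epsilon$ by the stability argument of Section~\ref{section: reduce to the soliton neighborhood}.
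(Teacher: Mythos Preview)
Your proposal is correct and follows essentially the same approach as the paper's proof: both run the Bourgain slicing argument of Proposition~\ref{prop: outside ground state}, using the smallness of $\epsilon$ in the GNS inequality in place of the parameter $\gamma$, and both handle the passage from $\mu$ to $\mu_{V_{0,\delta}}$ by exploiting that the complementary projection has rank two with smooth range. The only cosmetic difference is packaging: the paper pulls everything back to $u\sim\mu$ and tracks the two correction terms $\jbrac{u,(-\Delta)t_j}$ explicitly (splitting the H\"older estimate into three factors and the tail bound for $F_{2N}$ into three events), whereas you push forward to $\mu_{V_{0,\delta}}$, first transferring the large deviation estimate and then invoking directly that $v\mapsto\norm{\nabla P_{\leq N}v}_{L^2}^2$ is a rank-$O(N)$, norm-$\leq 1$ quadratic form on $(V_{0,\delta},\dot H^1)$ to get the exponential moment bound.
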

	\begin{proof}
		Recall that
		\begin{equation*}
			V_{\theta,\delta}:=\brac{v\in L^2_{rad}(\D): \jbrac{v, -\Delta (e^{i\theta}\partial_{\delta}\bQ_{\delta})} = \jbrac{v, -\Delta (ie^{i\theta}\bQ_{\delta})} = 0}. 
		\end{equation*}
		Let $t_1, t_2$ be $\dot{H}^1$-normalized vectors of $\bQ_\delta, i\bQ_\delta$, respectively, that is
		\begin{equation*}
			t_1 = \frac{\partial_\delta \bQ_\delta}{\norm{\partial_\delta \bQ_\delta}_{\dot{H}^1}},\; t_2 = \frac{i\bQ_\delta}{\norm{i\bQ_\delta}_{\dot{H}^1}}.
		\end{equation*}
		Then the corresponding orthogonal projection $P_V$ on $V_{0,\delta}$ follows as
		\begin{equation*}
			P_{V_{\theta,\delta}} u := u - \sum_j \jbrac{u,(-\Delta)t_j}t_j.
		\end{equation*}
		Since $\mu_{V_{0,\delta}}$ is the free Gaussian measure on $V_{0,\delta}$, we have
		\begin{equation*}
			\int_{\norm{v}_2\leq \epsilon}e^{C_{\eta}\int_{\D}|v|^4}d\mu_{V_{0,\delta}}(v) = \int_{\norm{P_V u}_2\leq \epsilon}e^{C_\eta \int_\D |P_V u|^4}d\mu(u).
		\end{equation*}
		As in Proposition \ref{prop: outside ground state}, we further slice the domain:
		\begin{equation*}
			F_{2N}:=\brac{u\in L^2_{rad}(\D): \norm{P_{\geq M}P_V u}_4 > \lambda_M, M<2N; \norm{P_{\geq 2N}P_V u}_4 \leq \lambda_{2N}},
		\end{equation*}
		where $\lambda_N = (\log N)^{3/4}$. 
		Therefore,
		\begin{equation}\label{loc:expectation}
			\E_\mu \sqbrac{e^{C_\eta\int_\D |P_V u|^4}, F_{2N}, \norm{P_V u}_2\leq \epsilon}\leq e^{C_1\lambda_{2N}^4}\E_\mu\sqbrac{e^{C_2 \int_\D |P_{\leq N}P_V u|^4}, F_{2N}, \norm{P_V u}_2\leq \epsilon}.
		\end{equation}
		$C_1,C_2$ above depend only on $\eta$.
		Applying GNS inequality and conditioning on $\norm{P_V u}_2\leq \epsilon$,
		\begin{align*}
			\int |P_{\leq N}P_V u|^4 
			&\lesssim \norm{P_{\leq N}P_V u}_2^2\norm{P_{\leq N}P_V u}_{\dot{H}^1}^2 \leq \epsilon^2 \norm{P_{\leq N} u - \sum_j \jbrac{u,(-\Delta)t_j}P_{\leq N}t_j}_{\dot{H}^1}^2\\
			&\lesssim \epsilon^2 \pare{\norm{P_{\leq N}u}_{\dot{H}^1}^2+\sum_j \jbrac{u,(-\Delta)t_j}^2 \norm{P_{\leq N}t_j}_{\dot{H}^1}^2}\\
			&\leq \epsilon^2 \pare{\norm{P_{\leq N}u}_{\dot{H}^1}^2+\sum_j \jbrac{u,(-\Delta)t_j}^2}.
		\end{align*}
		(\ref{loc:expectation}) is then bounded by
		\begin{equation*}
			e^{C_1\lambda_{2N}^4}\E_\mu\sqbrac{e^{C_3\epsilon^2\norm{P_{\leq N} u}_{\dot{H}^1}^2}}^{1/4} \prod_j  \E_\mu\sqbrac{e^{C_4\epsilon^2 \jbrac{u,(-\Delta)t_j}^2}}^{1/4}\mu\pare{F_{2N}}^{1/4}
		\end{equation*}
		The first expectation has the same form as (\ref{eqn: exponential gaussian}), therefore equals to $\exp\pare{\frac{N}{2}\log\pare{\frac{1}{1-2C_3\epsilon^2}}}$.
		
		For the second expectation, note that $\jbrac{u,(-\Delta)t_j}$ is a mean-zero Gaussian. Using the series representation (\ref{def: series u precise}), its variance is bounded by $2\norm{t_j}_{\dot{H}^1}^2 = 2$. So, the expectation is bounded provided $\epsilon$ is small enough.
		
		Finally,
		\begin{align*}
			\mu(F_{2N}) 
			&\leq \mu\brac{\norm{P_{\geq N}P_V u}_4 \geq \lambda_N}\\
			&\leq \mu\brac{\norm{P_{\geq N}u}_4\geq \lambda_N/3} + \sum_j \mu\brac{\jbrac{u,(-\Delta)t_j}\norm{P_{\geq N}t_j}_4 \geq \lambda_N/3}.
		\end{align*}
		By (\ref{ineq:section2 F_N}) and $\lambda_N = (\log N)^{3/4}$, $\mu\brac{\norm{P_{\geq N}u}_4\geq \lambda_N/3} \leq \exp\pare{-c(\log N)N}$. Applying GNS inequality,
		\begin{equation*}
			\norm{P_{\geq N}t_j}_4 \lesssim \norm{P_{\geq N}t_j}_2^{1/2}\norm{P_{\geq N}t_j}_{\dot{H}^1}^{1/2}\leq \norm{P_{\geq N}t_j}_2^{1/2}.
		\end{equation*}
		Since $t_j$ are smooth functions, we have
		\begin{equation*}
			\norm{P_{\geq N}t_j}_2^2 = \sum_{n\geq N}\jbrac{t_j,e_n}^2 \leq\norm{(-\Delta)^k t_j}_2^2 \sum_{n\geq N}\frac{\norm{e_n}_2^2}{z_n^{4k}} \lesssim_k \frac{1}{N^{4k-1}}.
		\end{equation*}
		Thus,
		\begin{equation*}
			\mu\brac{\jbrac{u,(-\Delta)t_j}\norm{P_{\geq N}t_j}_4 \geq \lambda_N/3} \leq \mu\brac{\jbrac{u,(-\Delta)t_j} \gtrsim N} \leq \exp\pare{-cN^2}.
		\end{equation*}
		Now summing the factors above,
		\begin{equation*}
			\int_{\norm{v}_2\leq \epsilon}e^{C_{\eta}\int_{\D}|v|^4}d\mu_{V_{0,\delta}}(v) \lesssim \sum_N \exp\pare{C_1(\log 2N)^3 + \frac{N}{8}\log(\frac{1}{1-2C_3\epsilon^2}) -cN^2 } <\infty.
		\end{equation*}
	\end{proof}

	\subsection{Proof of Theorem \ref{thm:decomposition}}\label{section: normal bundle decomposition}
	Fix $\bar{\theta}, \bar{\delta}$, let $G$ be the diffeomorphism defined in the statement. The differentiability of $G$ is straightforward. To prove that $G$ is invertible, we use the following version of Inverse Function Theorem.
	\begin{theorem}\label{thm:inverse function theorem}
		Let $X, Y$ be Banach spaces, and $f: X \rightarrow Y$ be a $C^1$ maps. Suppose $df(x_0)$ is invertible and for some $\kappa<1$, $R>0$,
		\begin{align}\label{loc: inverse function}
			\norm{df(x_0)^{-1} \circ df(x)-Id_X}\leq \kappa, \quad x\in \bar{B}^X(x_0,R).
		\end{align}
		Then $f$ is invertible near $x_0$. Moreover, for $r:= \frac{1-\kappa}{\norm{df(x_0)^{-1}}}R$,  
		\begin{align*}
			\bar{B}^Y(f(x_0),r) \subset f(\bar{B}^X(x_0,R)).
		\end{align*}
	\end{theorem}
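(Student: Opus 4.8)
The plan is to derive everything from the Banach fixed point theorem applied to a Newton-type auxiliary map. Fix a target value $y\in\bar B^Y(f(x_0),r)$ and define
\[
T_y(x):=x-df(x_0)^{-1}\bigl(f(x)-y\bigr),
\]
so that $x$ is a fixed point of $T_y$ if and only if $f(x)=y$. Since $f$ is $C^1$, so is $T_y$, and $dT_y(x)=\mathrm{Id}_X-df(x_0)^{-1}\circ df(x)$; hypothesis \eqref{loc: inverse function} gives $\norm{dT_y(x)}\le\kappa$ for every $x\in\bar B^X(x_0,R)$. As the closed ball is convex, the mean value inequality yields $\norm{T_y(x_1)-T_y(x_2)}\le\kappa\norm{x_1-x_2}$ for $x_1,x_2\in\bar B^X(x_0,R)$, so $T_y$ is a $\kappa$-contraction there.

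Next I would check that $T_y$ maps $\bar B^X(x_0,R)$ into itself whenever $\norm{y-f(x_0)}\le r$. For $x\in\bar B^X(x_0,R)$,
\[
\norm{T_y(x)-x_0}\le\norm{T_y(x)-T_y(x_0)}+\norm{T_y(x_0)-x_0}\le\kappa\norm{x-x_0}+\norm{df(x_0)^{-1}}\,\norm{f(x_0)-y}\le\kappa R+\norm{df(x_0)^{-1}}\,r=R,
\]
by the very choice $r=(1-\kappa)R/\norm{df(x_0)^{-1}}$. Thus $T_y$ maps the complete metric space $\bar B^X(x_0,R)$ into itself, and the contraction principle produces a unique $x\in\bar B^X(x_0,R)$ with $f(x)=y$. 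This already establishes the inclusion $\bar B^Y(f(x_0),r)\subset f(\bar B^X(x_0,R))$.

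For local invertibility, uniqueness of the fixed point forces $f$ to be injective on $\bar B^X(x_0,R)$: two preimages of the same $y$ would both be fixed points of $T_y$, hence equal. Writing $x_i=T_y(x_i)+df(x_0)^{-1}\bigl(f(x_i)-y\bigr)$ and subtracting gives the two-sided bound $\norm{f(x_1)-f(x_2)}\ge\tfrac{1-\kappa}{\norm{df(x_0)^{-1}}}\norm{x_1-x_2}$, so the local inverse is Lipschitz on $\bar B^Y(f(x_0),r)$. Finally, since $\norm{df(x_0)^{-1}\circ df(x)-\mathrm{Id}_X}\le\kappa<1$, a Neumann series shows $df(x_0)^{-1}\circ df(x)$, hence $df(x)$, is invertible throughout $\bar B^X(x_0,R)$, and the standard differentiability argument then upgrades the local inverse to a $C^1$ map with $d(f^{-1})(y)=df(f^{-1}(y))^{-1}$. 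The only delicate point — and the closest thing to an obstacle — is purely bookkeeping: arranging the domains so that ``invertible near $x_0$'' refers to an honest open neighbourhood (e.g.\ restricting to $f^{-1}(B^Y(f(x_0),r))\cap B^X(x_0,R)$); the analytic content is entirely the contraction estimate, which is immediate from \eqref{loc: inverse function}.
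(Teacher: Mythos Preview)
Your proof is correct and follows essentially the same route as the paper: the auxiliary map $T_y(x)=x-df(x_0)^{-1}(f(x)-y)$ is exactly the map $H_y$ used there (after the paper normalises $x_0=0$, $f(x_0)=0$), and both arguments reduce to the Banach fixed point theorem via the same contraction and self-mapping estimates. You go a bit further than the paper by extracting the two-sided Lipschitz bound for $f$ and the $C^1$ regularity of the inverse via the Neumann series; the paper stops once the inclusion $\bar B^Y(f(x_0),r)\subset f(\bar B^X(x_0,R))$ is established, which is all it needs downstream.
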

	\begin{proof}
		Assume $x_0=0$ and $f(0)=0$. Denote $A=df(0)$. For any $y\in \bar{B}^Y(0,r)$, define map $H_y : \bar{B}^X(0,R) \rightarrow X$ as
		\begin{equation*}
			H_y(x) :=  A^{-1}y-(A^{-1}f(x)-x).
		\end{equation*}
		By (\ref{loc: inverse function}), for $x_1,x_2 \in \bar{B}^X(0,R)$,
		\begin{align*}
			\norm{H(x_1)-H(x_2)} = \norm{\pare{A^{-1}f-Id}(x_1)-\pare{A^{-1}f-Id}(x_2)}\leq \kappa\norm{x_1-x_2}.
		\end{align*}
		Moreover,
		\begin{equation*}
			\norm{H_y(x)}_X \leq \norm{A^{-1}y}_X + \norm{A^{-1}f(x)-x}_X \leq \norm{A^{-1}}r+\kappa R \leq R.
		\end{equation*}
		
		So $H$ is a contraction map on $\bar{B}^X(0,R)$. Banach's Fixed-Point Theorem implies the existence a unique fixed point $x$ such that $H_y(x)=x$, that is $f(x)=y$.
	\end{proof}
	
	To apply the Inverse Function Theorem, we need operator bounds for
	\begin{equation*}
	    dG^{-1}(0,1,0)\circ dG(\theta, \delta, w) - Id.
	\end{equation*}
	In the rest of this subsection, we set, with out loss of generality, $\bar{\theta} = 0$.
	
	\begin{lemma}\label{lemma: inverse of dG}
		For $\bar{\delta} \gtrsim N^{-1}$, the derivative
		\begin{align*}
			dG(0, 1, 0): \R\times\R\times V_{0,\bar{\delta}}\cap E_N \longrightarrow L^2_{rad}(\D)\cap E_N
		\end{align*}
		is invertible and the inverse $dG^{-1}(0, 1, 0)$ is bounded uniformly in $\bar{\delta}$.
	\end{lemma}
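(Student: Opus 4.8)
The plan is to compute $dG(0,1,0)$ explicitly, deduce bijectivity from a dimension count, and then extract the uniform bound from the scaling estimates \eqref{est: H^k norm of Q}, using crucially that for $\bar{\delta}\geq C_0 N^{-1}$ with $C_0$ a sufficiently large absolute constant — which is compatible with the range of $\delta_*$ produced by Theorem \ref{thm:decomposition}, since one is free to take $\delta_*$ of size $C_0N^{-1}$ — the projection $P_{\leq N}$ alters $\bQ_{\bar{\delta}}$ and $\partial_{\bar{\delta}}\bQ_{\bar{\delta}}$ only negligibly in $\dot{H}^1$ and $L^2$. A direct differentiation at the base point gives
\[
dG(0,1,0)(a,b,w) = a\,\xi_1 + b\,\xi_2 + w, \qquad \xi_1 := P_{\leq N}\pare{i\bQ_{\bar{\delta}}}, \quad \xi_2 := \bar{\delta}\,P_{\leq N}\pare{\partial_{\bar{\delta}}\bQ_{\bar{\delta}}},
\]
with $w$ ranging over $V_{0,\bar{\delta}}\cap E_N$; since $P_{\leq N}$ commutes with $-\Delta$ on the eigenbasis and is self-adjoint for both the $L^2$ and the $\dot{H}^1$ inner product, every such $w$ is $\dot{H}^1$-orthogonal to $\xi_1$ and $\xi_2$. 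As $\xi_1$ is purely imaginary, $\xi_2$ purely real, and both nonzero (by \eqref{est: restriction of Q}, Lemma \ref{lemma: ground state}, and the fact that $P_{\leq N}$ is nearly the identity on $\bQ_{\bar\delta}$), the two constraints cutting out $V_{0,\bar{\delta}}\cap E_N$ are independent, so the domain has real dimension $2+(2N-2)=2N=\dim E_N$; and $a\xi_1+b\xi_2+w=0$ forces $w=0$ (pair with $w$ in $\dot{H}^1$) and then $a=b=0$ by the real/imaginary splitting. Hence $dG(0,1,0)$ is a bijection.

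For the norm bound I put $|a|+|b|+\norm{w}_{L^2}$ on the domain and $\norm{\cdot}_{L^2}$ on the target. Fix $z\in L^2_{rad}(\D)\cap E_N$ and let $z = z_\parallel + w$ be its $\dot{H}^1$-orthogonal splitting with $z_\parallel\in\mathrm{span}_{\R}\{\xi_1,\xi_2\}$ and $w:=z-z_\parallel$, which lies in $V_{0,\bar{\delta}}\cap E_N$ by the observation above. Writing $z_\parallel = a\xi_1+b\xi_2$, the pair $(a,b)$ solves the $2\times 2$ system $\Gamma(a,b) = \pare{\jbrac{z,\xi_1}_{\dot H^1},\jbrac{z,\xi_2}_{\dot H^1}}$, with $\Gamma$ the $\dot{H}^1$-Gram matrix of $\{\xi_1,\xi_2\}$. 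From \eqref{est: H^k norm of Q}, the negligibility of $P_{\geq N}\bQ_{\bar{\delta}}$ and $P_{\geq N}\partial_{\bar{\delta}}\bQ_{\bar{\delta}}$ (via $\norm{P_{\geq N}f}_{\dot H^1}\leq z_N^{-1}\norm{f}_{\dot H^2}$ and $\bar\delta\gtrsim N^{-1}$), and the nondegeneracy lower bounds $\norm{Q}_{L^2(\R^2)},\norm{Q+x\cdot\nabla Q}_{L^2(\R^2)}>0$, one reads off $\norm{\xi_j}_{L^2}\sim 1$, $\norm{\xi_j}_{\dot{H}^1}\sim\bar{\delta}^{-1}$ and $\norm{\xi_j}_{\dot{H}^2}\lesssim\bar{\delta}^{-2}$; moreover $\jbrac{i\bQ_{\bar{\delta}},\partial_{\bar{\delta}}\bQ_{\bar{\delta}}}_{\dot{H}^1}=0$ exactly (both functions are real), so $\jbrac{\xi_1,\xi_2}_{\dot{H}^1} = -\bar{\delta}\,\jbrac{i\bQ_{\bar{\delta}},P_{\geq N}\partial_{\bar{\delta}}\bQ_{\bar{\delta}}}_{\dot{H}^1}$ is $O(C_0^{-1}\bar{\delta}^{-2})$. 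Thus $\bar{\delta}^2\Gamma$ has diagonal entries $\sim 1$ and off-diagonal $O(C_0^{-1})$, whence $\norm{\Gamma^{-1}}\lesssim\bar{\delta}^2$; combined with $\jbrac{z,\xi_j}_{\dot{H}^1} = \jbrac{z,-\Delta\xi_j}_{L^2}\leq\norm{z}_{L^2}\norm{\xi_j}_{\dot{H}^2}\lesssim\bar{\delta}^{-2}\norm{z}_{L^2}$ this gives $|a|,|b|\lesssim\norm{z}_{L^2}$, hence $\norm{z_\parallel}_{L^2}\lesssim\norm{z}_{L^2}$ and $\norm{w}_{L^2}\leq\norm{z}_{L^2}+\norm{z_\parallel}_{L^2}\lesssim\norm{z}_{L^2}$, all with constants independent of $\bar{\delta}$ and $N$. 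This is the asserted uniform bound on $dG^{-1}(0,1,0)$.

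I expect the last step to be the main obstacle. Both $\bQ_{\bar{\delta}}$ and the dilation direction $\partial_{\bar{\delta}}\bQ_{\bar{\delta}}$ concentrate as $\bar{\delta}\to 0$, so one must verify that after the correct $\bar{\delta}$-renormalization (the factor $\bar{\delta}$ absorbed into $\xi_2$, which equalizes the $L^2$ and $\dot{H}^1$ scales of $\xi_1$ and $\xi_2$) the Gram matrix degenerates at the controlled rate $\bar{\delta}^{-2}$ with a uniformly invertible normalized form — which is precisely where the exact orthogonality $\jbrac{i\bQ_{\bar{\delta}},\partial_{\bar{\delta}}\bQ_{\bar{\delta}}}_{\dot{H}^1}=0$, the lower bounds on $\norm{Q}_{L^2(\R^2)}$ and $\norm{Q+x\cdot\nabla Q}_{L^2(\R^2)}$, and the quantitative smallness of the truncation errors for $\bar{\delta}\geq C_0N^{-1}$ must all be combined. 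The remaining bookkeeping (reducing $\dot{H}^k$ norms of $\bQ_{\bar{\delta}}$ to those of $Q_{\bar{\delta}}$ via \eqref{est: restriction of Q} and exponential decay of $Q$, and the elementary scaling identities for $Q_{\bar{\delta}}$ and $\partial_{\bar{\delta}}Q_{\bar{\delta}}$) is routine.
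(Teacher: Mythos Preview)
Your argument is correct and reaches the same conclusion as the paper, but the crucial lower bounds on $\norm{P_{\leq N}\bQ_{\bar\delta}}_{\dot H^1}$ and $\norm{P_{\leq N}\partial_\delta\bQ_{\bar\delta}}_{\dot H^1}$ are obtained by a genuinely different route. The paper computes the coefficients $\jbrac{Q_{\bar\delta},e_n}$ explicitly in terms of the Bessel transform $\hat Q$ and the asymptotics of $J_0$, and then recognizes $\sum_{n\leq N} z_n^2\jbrac{Q_{\bar\delta},e_n}^2$ as a Riemann sum for $\int_0^{z_N\bar\delta} y^3\hat Q(y)^2\,dy$, which is bounded below once $z_N\bar\delta\geq 1$. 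You instead control the truncation error by the spectral inequality $\norm{P_{>N}f}_{\dot H^1}\leq z_N^{-1}\norm{f}_{\dot H^2}$ and compare with the full $\dot H^1$ norm, reducing everything to the scaling estimates \eqref{est: H^k norm of Q} and the nondegeneracy of $Q$ and $Q+x\cdot\nabla Q$ on $\R^2$. Your approach is more elementary in that it avoids any Bessel-function computation, at the price of requiring the implicit constant in $\bar\delta\gtrsim N^{-1}$ to be chosen large enough to absorb the truncation error; the paper's Riemann-sum argument tracks the threshold more precisely. Two minor points: the off-diagonal Gram entry $\jbrac{\xi_1,\xi_2}_{\dot H^1}$ is in fact exactly zero (since $P_{\leq N}$ preserves the real/imaginary splitting), so no smallness argument is needed there; and for the lower bounds $\norm{\xi_j}_{\dot H^1}\sim\bar\delta^{-1}$ you actually need $\norm{\nabla Q}_{L^2(\R^2)}>0$ and $\norm{\nabla(Q+x\cdot\nabla Q)}_{L^2(\R^2)}>0$ rather than the $L^2$ nondegeneracy you cite, though these hold for the same trivial reason.
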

	\begin{proof}
		Given $\mathbf{v} = (\alpha,\beta,v)\in \R \times \R \times V_{0,\bar{\delta}}\cap E_N$, denote $u:= dG(0, 1, 0)\mathbf{v}$. Direct computation gives,
		\begin{align}\label{loc,eqn: image of dG}
			u = \alpha P_{\leq N}i\bQ_{\bar{\delta}} + \beta\bar{\delta} P_{\leq  N}\partial_{\delta}\bQ_{\bar{\delta}} + v.
		\end{align}
		In coordinate $(P_{\leq N}i\bQ_{\bar{\delta}}, P_{\leq  N}\partial_{\delta}\bQ_{\bar{\delta}}, V_{0, \bar{\delta}}\cap E_N)$ of $L^2_{rad}(\D)\cap E_N$, the map $dG(0, 1, 0)$ has matrix representation
		\begin{equation*}
			\begin{pmatrix} 
				1 & 0 & 0\\
				0 & \bar{\delta} & 0\\
				0 & 0 & Id
			\end{pmatrix},
		\end{equation*}
		which is invertible.
		By orthogonality, (\ref{loc,eqn: image of dG}) yields
		\begin{align*}
			\alpha &= \frac{\jbrac{u,(-\Delta)i\bQ_{\bar{\delta}}}}{\jbrac{P_{\leq N}\bQ_{\bar{\delta}},(-\Delta)\bQ_{\bar{\delta}}}}\\
			\beta &=\bar{\delta}^{-1} \frac{\jbrac{u,(-\Delta)\partial_{\delta}\bQ_{\bar{\delta}}}}{\jbrac{P_{\leq N}\partial_{\delta}\bQ_{\bar{\delta}},(-\Delta)\partial_{\delta}\bQ_{\bar{\delta}}}}\\
			v &= u-\alpha P_{\leq N}i\bQ_{\bar{\delta}} -\beta\bar{\delta}P_{\leq N} \partial_{\delta}\bQ_{\bar{\delta}}.
		\end{align*}
		
		Next we will obtain lower bounds for the denominators. Since $\bQ_\delta$ differs $Q_\delta$ only by $O(e^{-c\delta^{-1}})$ (also true for its derivatives), it suffices to estimate the corresponding expression with direct truncation $Q_\delta$. Recall $e_n$ are $L^2$-normalized eigenfunctions. We have
		\begin{align*}
			\jbrac{Q_{\bar{\delta}},e_n} &= \int_\D \bar{\delta}^{-1}Q(\bar{\delta}^{-1}x)J_0(z_n x)\norm{J_0(z_n\cdot)}_{L^2(\D)}^{-1}dx \\
			&= \norm{J_0(z_n\cdot)}_{L^2(\D)}^{-1}\bar{\delta} \int_{\bar{\delta}^{-1}\D}Q(y)J_0(z_n\bar{\delta} y)dy\\
			&= \norm{J_0(z_n\cdot)}_{L^2(\D)}^{-1}\bar{\delta} \pare{\hat{Q}(z_n\bar{\delta})+O\pare{\exp(-c\bar{\delta}^{-1 })}}.
		\end{align*}
		Here, using that $J_0$ is bounded near $0$ and asymptotic approximation (\ref{est: Bessel asymtotic at infity}),
		\begin{align*}
			\hat{Q}(\xi) & := \int_{\R^2}Q(x)J_0(\xi x)dx \\
			&\lesssim \int_0^{\frac{1}{\xi}}Q(r)rdr +\int_{\frac{1}{\xi}}^{\infty}Q(r)\cos(\xi r-\frac{\pi}{4})(\xi r)^{-\frac{1}{2}}rdr \lesssim \jbrac{\xi}^{-2}.\\
			\norm{J_0(z_n\cdot)}_{L^2(\D)}^2 &= \frac{1}{z_n^2}\int_{z_n \D} J_0(y)^2 dy = \frac{1}{z_n^2} \int_0^{z_n} J_0(r)^2rdr \\
			&=\frac{1}{z_n^2}\pare{\int_0^1 O(1) dr + \int_1^{z_n} \cos(r-\frac{\pi}{4})^2 dr}\sim\frac{1}{z_n}.
		\end{align*}
		Combining with (\ref{est: roots of J_0})
		\begin{equation}\label{loc,size:lower bound 1}
			\begin{aligned}
				\jbrac{P_{\leq N}(-\bar{\delta})Q_{\bar{\delta}},Q_{\bar{\delta}}} &= \sum_{n\leq N} z_n^2 \jbrac{Q_{\bar{\delta}},e_n}^2 \sim \sum_{n\leq N} z_n^3\bar{\delta}^2 \pare{\hat{Q}(z_n\bar{\delta})^2 +O(\exp(-c\bar{\delta}^{-1}))}\\
				&\sim \bar{\delta}^{-2}\pi\sum_{n\leq N} (z_n\bar{\delta})^3 \hat{Q}(z_n\bar{\delta})^2 \cdot \bar{\delta}(z_n-z_{n-1})\\
				&\sim \bar{\delta}^{-2}\pi \int_0^{z_N \bar{\delta}}y^3\hat{Q}(y)^2 dy\\
				\pare{\text{for  }\bar{\delta} \geq \frac{1}{z_N} \sim \frac{1}{N}}\quad &\geq c_1\bar{\delta}^{-2}.
			\end{aligned}
		\end{equation}
		Similar computation yields, for $\bar{\delta} \gtrsim N^{-1}$,
		\begin{equation}\label{loc,size:lower bound 2}
			\jbrac{P_{\leq N}(-\Delta)\partial_{\delta}Q_{\bar{\delta}},\partial_{\delta}Q_{\bar{\delta}}}\gtrsim \bar{\delta}^{-4}
		\end{equation}
		
		Using (\ref{est: H^k norm of Q}), (\ref{loc,size:lower bound 1}), (\ref{loc,size:lower bound 2}) and Cauchy-Schwarz,\footnote{The implicit constant in the inequality depends on $\delta^*$}
		\begin{align*}
			\alpha &\lesssim \bar{\delta}^2 \norm{u}_{L^2(\D)}\norm{\bQ_{\bar{\delta}}}_{H^2(\D)} \lesssim \norm{u}_{L^2(\D)}.\\
			\beta &\lesssim \bar{\delta}^3 \norm{u}_{L^2(\D)}\norm{\partial_{\delta}\bQ_{\bar{\delta}}}_{H^2(\D)} \lesssim \norm{u}_{L^2(\D)}.\\
			\norm{v}_{L^2(\D)} &\lesssim \norm{u}_{L^2(\D)} + |\alpha| + |\beta| \lesssim  \norm{u}_{L^2(\D)}.
		\end{align*}
	\end{proof}
	
	\begin{lemma}\label{lemma: compare dG^{-1}dG with id}
		For $\bar{\delta} \gtrsim N^{-1}$, we have
		\begin{equation*}
			\norm{dG^{-1}(0,1,0)\circ dG(\theta, \delta, w) - Id}_{op} \lesssim |\theta|+|\delta-1|(\delta^{-1}+1)+\norm{w}_{L^2(\D)}. 
		\end{equation*}
	\end{lemma}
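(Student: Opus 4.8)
The plan is to write $dG(\theta,\delta,w)$ explicitly, compare it with $dG(0,1,0)$, and conclude via the uniform bound $\norm{dG^{-1}(0,1,0)}_{op}\lesssim 1$ proved in Lemma~\ref{lemma: inverse of dG}. Set $s:=\bar\delta\delta$ (note $s\gtrsim N^{-1}$ throughout, since $\bar\delta\gtrsim N^{-1}$ and $\delta$ will be close to $1$) and recall that $P_{\leq N}$ commutes with multiplication by $i$ and with $-\Delta$ on $E_N$. Differentiating the defining formula for $G$ (with $\bar\theta=0$), a tangent vector $(\alpha,\beta,v)\in\R\times\R\times V_{0,\bar\delta}\cap E_N$ is sent by $dG(\theta,\delta,w)$ to
\begin{align*}
	\alpha\, P_{\leq N}\pare{ie^{i\theta}\bQ_{s}}&+\beta\,\bar\delta\, P_{\leq N}\pare{e^{i\theta}\partial_\delta\bQ_{s}}\\
	&+\alpha\,\pare{\partial_\theta P_{V_{\theta,s}\cap E_N}}w+\beta\,\bar\delta\,\pare{\partial_{s} P_{V_{\theta,s}\cap E_N}}w+P_{V_{\theta,s}\cap E_N}v,
\end{align*}
where $\partial_\delta\bQ_{s}$ is shorthand for $(\partial_\delta\bQ_\delta)|_{\delta=s}$ and the factor $\bar\delta$ in the second and fourth terms comes from the chain rule applied to $\delta\mapsto\bQ_{\bar\delta\delta}$ resp.\ $\delta\mapsto P_{V_{\theta,\bar\delta\delta}\cap E_N}$. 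At $(\theta,\delta,w)=(0,1,0)$ the two middle terms vanish and the expression collapses to the one in Lemma~\ref{lemma: inverse of dG}. Since $\norm{dG^{-1}(0,1,0)\circ dG(\theta,\delta,w)-\mathrm{Id}}_{op}\le\norm{dG^{-1}(0,1,0)}_{op}\norm{dG(\theta,\delta,w)-dG(0,1,0)}_{op}$, it remains to bound the five term-by-term differences, the operator norm being taken between $\R\times\R\times V_{0,\bar\delta}\cap E_N$ and $L^2(\D)\cap E_N$ with their $L^2(\D)$-norms.

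For the two \emph{soliton terms}, $\alpha P_{\leq N}(ie^{i\theta}\bQ_{s}-i\bQ_{\bar\delta})$ and $\beta\bar\delta P_{\leq N}(e^{i\theta}\partial_\delta\bQ_{s}-\partial_\delta\bQ_{\bar\delta})$, I would discard $P_{\leq N}$ (an $L^2$-contraction), peel off the phase with $|e^{i\theta}-1|\le|\theta|$, and control the remaining $s$-difference by the fundamental theorem of calculus along $s(t)=\bar\delta(1+t(\delta-1))$. The scaling bounds \eqref{est: H^k norm of Q} give $\norm{\bQ_\delta}_{L^2}=O(1)$, $\norm{\partial_\delta\bQ_\delta}_{L^2}=O(\delta^{-1})$, and the same scaling argument gives $\norm{\partial_\delta^2\bQ_\delta}_{L^2}=O(\delta^{-2})$ (the discrepancy between $\bQ_\delta$ and $Q_\delta$ is $O(e^{-c\delta^{-1}})$ and negligible); hence $\norm{\bQ_{s}-\bQ_{\bar\delta}}_{L^2}\lesssim|\log\delta|\lesssim|\delta-1|(\delta^{-1}+1)$ and $\bar\delta\norm{\partial_\delta\bQ_{s}-\partial_\delta\bQ_{\bar\delta}}_{L^2}\lesssim\bar\delta\,|s^{-1}-\bar\delta^{-1}|\lesssim|\delta-1|(\delta^{-1}+1)$. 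Together with the phase contributions ($O(|\theta|)$ after absorbing a harmless factor $\delta^{-1}\lesssim1$, valid in the range $|\delta-1|\lesssim1$ relevant to Theorem~\ref{thm:inverse function theorem}), the two soliton terms are $\lesssim(|\theta|+|\delta-1|(\delta^{-1}+1))(|\alpha|+|\beta|)$.

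The three \emph{projection terms} are controlled uniformly in $\bar\delta\gtrsim N^{-1}$ and $N$ once one knows that $P_{V_{\theta,s}\cap E_N}$ and its $\theta$- and $s$-derivatives are bounded on $L^2(\D)\cap E_N$. Write $P_{V_{\theta,s}\cap E_N}=\mathrm{Id}_{E_N}-\jbrac{\cdot,t_1}_{\dot{H}^1}t_1-\jbrac{\cdot,t_2}_{\dot{H}^1}t_2$, with $t_1,t_2$ the $\dot{H}^1$-orthonormal frame built from $P_{\leq N}ie^{i\theta}\bQ_s$ and $P_{\leq N}e^{i\theta}\partial_\delta\bQ_s$ (these two are already $\dot{H}^1$-orthogonal, the inner product being a real part). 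For $u\in E_N$ one has $\jbrac{u,t_j}_{\dot{H}^1}=\jbrac{u,(-\Delta)t_j}$, and the crux is the cancellation
\[\norm{t_j}_{L^2}\norm{(-\Delta)t_j}_{L^2}=O(1)\qquad\text{uniformly in } s\gtrsim N^{-1}:\]
the $\dot{H}^1$-normalization inflates $t_j$ to $L^2$-size $O(s)$, while \eqref{est: H^k norm of Q} (bounding $\norm{(-\Delta)\bQ_s}_{L^2}$ and $\norm{(-\Delta)\partial_\delta\bQ_s}_{L^2}$) together with the matching lower bounds \eqref{loc,size:lower bound 1}, \eqref{loc,size:lower bound 2} for the normalizing constants $\norm{P_{\leq N}\bQ_s}_{\dot{H}^1}\sim s^{-1}$, $\norm{P_{\leq N}\partial_\delta\bQ_s}_{\dot{H}^1}\sim s^{-2}$ (valid with $\bar\delta$ replaced by any parameter $\gtrsim N^{-1}$) make $(-\Delta)t_j$ of size $O(s^{-1})$. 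This gives $\norm{P_{V_{\theta,s}\cap E_N}}_{L^2\to L^2}=O(1)$; since each $\theta$- or $s$-derivative of $t_j$ or of its normalizing constant again obeys the same pair of estimates, at worst losing one factor $s^{-1}$ under $\partial_s$, one gets $\norm{\partial_\theta P_{V_{\theta,s}\cap E_N}}_{L^2\to L^2}=O(1)$ and $\norm{\partial_s P_{V_{\theta,s}\cap E_N}}_{L^2\to L^2}=O(s^{-1})$, all uniform. Hence the first projection term is $\lesssim|\alpha|\norm{w}_{L^2}$; the second is $\lesssim|\beta|\bar\delta s^{-1}\norm{w}_{L^2}=|\beta|\delta^{-1}\norm{w}_{L^2}\lesssim|\beta|\norm{w}_{L^2}$; and the third, $(P_{V_{\theta,s}\cap E_N}-P_{V_{0,\bar\delta}\cap E_N})v$, is $\lesssim(|\theta|+|\delta-1|(\delta^{-1}+1))\norm{v}_{L^2}$ by integrating $\tfrac{d}{dt}P_{V_{t\theta,s(t)}\cap E_N}=\theta\,\partial_\theta P_{V}+\bar\delta(\delta-1)\partial_s P_{V}$ over $t\in[0,1]$.

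Summing the five contributions and using $|\alpha|+|\beta|+\norm{v}_{L^2}\le1$ on the unit ball gives the asserted bound on $\norm{dG(\theta,\delta,w)-dG(0,1,0)}_{op}$, and the lemma follows from the uniform control of $\norm{dG^{-1}(0,1,0)}_{op}$. I expect the main obstacle to be precisely the uniform-in-$(\bar\delta,N)$ $L^2$-boundedness of $P_{V_{\theta,s}\cap E_N}$ and of its derivatives: the naive estimates blow up as $s\to0$, and one must see the exact cancellation between the $\dot{H}^1$-normalization of the frame and the Laplacian, which is where the lower bounds \eqref{loc,size:lower bound 1}--\eqref{loc,size:lower bound 2} from the proof of Lemma~\ref{lemma: inverse of dG} reappear. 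The soliton-term estimates are routine once the scaling bounds for $\bQ_\delta$ and its first two $\delta$-derivatives are recorded.
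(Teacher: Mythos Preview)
Your proposal is correct and follows essentially the same route as the paper: both compute $dG(\theta,\delta,w)$ explicitly and control the deviation from $dG(0,1,0)$ via the scaling bounds \eqref{est: H^k norm of Q} together with the $\dot{H}^1$-lower bounds \eqref{loc,size:lower bound 1}--\eqref{loc,size:lower bound 2}, the latter yielding exactly the cancellation $\norm{t_j}_{L^2}\norm{(-\Delta)t_j}_{L^2}=O(1)$ that you isolate. The only organizational difference is that you factor through $\norm{dG^{-1}(0,1,0)}_{op}\cdot\norm{dG(\theta,\delta,w)-dG(0,1,0)}_{op}$, whereas the paper applies the inversion formulas of Lemma~\ref{lemma: inverse of dG} directly to $u=dG(\theta,\delta,w)\mathbf{v}$ and estimates each coordinate $\tilde\alpha-\alpha$, $\tilde\beta-\beta$, $\tilde v-v$ separately; your packaging is slightly cleaner but the underlying estimates are identical.
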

	\begin{proof}	
		Given $\mathbf{v} = (\alpha,\beta,v)\in \R \times \R \times V_{0,\bar{\delta}}\cap E_N$; denote the image by $\tilde{\mathbf{v}} = (\tilde{\alpha},\tilde{\beta}, \tilde{v})$ and the intermediate image by $u$. That is,
		\begin{align*}
			dG(\theta,\delta,w)\mathbf{v} = u = dG(0,1,0)\mathbf{\tilde{v}}.
		\end{align*}
		Direct computation yields,
		\begin{align*}
			dG(0,1,0)\mathbf{\tilde{v}} &= \tilde{\alpha} P_{\leq N}i\bQ_{\bar{\delta}} + \tilde{\beta}\bar{\delta} P_{\leq N}\partial_{\delta}\bQ_{\bar{\delta}} + \tilde{v},\\
			dG(\theta,\delta,w)\mathbf{v} &= \alpha P_{\leq N}ie^{i\theta}\bQ_{\bar{\delta}\delta}+ \beta\bar{\delta} P_{\leq N}(\partial_{\delta}\bQ)_{\bar{\delta}\delta} + P_{V_{0,\bar{\delta}\delta}\cap E_N}v +d\pare{P_{V_{0,\bar{\delta}\delta}\cap E_N}w}(\alpha,\beta).
		\end{align*}
		Here, $(\partial_{\delta}\bQ)_{\bar{\delta}\delta}$ means $\frac{\partial}{\partial_{s}}\bQ_s\mid_{s = \bar{\delta}\delta}$, and
		\begin{align*}
			d\pare{P_{V_{0,\bar{\delta}\delta}\cap E_N}w}(\alpha,\beta) 
			&= \pare{\alpha \partial_{\theta} + \beta \partial_{\delta}}\pare{-\frac{\jbrac{w,P_{\leq N}(-\Delta)ie^{i\theta}\bQ_{\bar{\delta}\delta}}}{\jbrac{P_{\leq N}(-\Delta)ie^{i\theta}\bQ_{\bar{\delta}\delta},ie^{i\theta}\bQ_{\bar{\delta}\delta}}}ie^{i\theta}\bQ_{\bar{\delta}\delta}}\\
			&+\pare{\alpha \partial_{\theta} + \beta \partial_{\delta}} \pare{-\frac{\jbrac{w,P_{\leq N}(-\Delta)\bar{\delta}e^{i\theta}(\partial_{\delta}\bQ)_{\bar{\delta}\delta}}}{\jbrac{P_{\leq N}(-\Delta)\bar{\delta}e^{i\theta}(\partial_{\delta}\bQ)_{\bar{\delta}\delta},\bar{\delta}e^{i\theta}(\partial_{\delta}\bQ)_{\bar{\delta}\delta}}}\bar{\delta}e^{i\theta}(\partial_{\delta}\bQ)_{\bar{\delta}\delta}}.
		\end{align*}
		Similar to the proof of Lemma \ref{lemma: inverse of dG}, applying (\ref{loc,size:lower bound 1}), (\ref{loc,size:lower bound 2}) and (\ref{est: H^k norm of Q}) we get\footnote{Heuristically, each $\Delta$ gives $\bar{\delta}^{-2}\delta^{-2}$, while each $\partial_{\delta}$ gives $\delta^{-1}$.}
		\begin{align}\label{loc, est: dP norm}
			\norm{d\pare{P_{V_{0,\bar{\delta}\delta}\cap E_N}w}(\alpha,\beta)}_{L^2(\D)} \lesssim \pare{|\alpha|+|\beta|}\norm{w}_{L^2(\D)}.
		\end{align}
		By orthogonality,
		\begin{align*}
			\tilde{\alpha}
			&= \frac{\jbrac{u,(-\Delta)i\bQ_{\bar{\delta}}}}{\jbrac{P_{\leq N}\bQ_{\bar{\delta}},(-\Delta)\bQ_{\bar{\delta}}}}\\
			&= \alpha\frac{\jbrac{P_{\leq N}ie^{i\theta}\bQ_{\bar{\delta}\delta},(-\Delta)i\bQ_{\bar{\delta}}}}{\jbrac{P_{\leq N}\bQ_{\bar{\delta}},(-\Delta)\bQ_{\bar{\delta}}}}
			+\beta\bar{\delta}\frac{\jbrac{P_{\leq N}e^{i\theta}(\partial_{\delta}\bQ)_{\bar{\delta}\delta},(-\Delta)i\bQ_{\bar{\delta}}}}{\jbrac{P_{\leq N}\bQ_{\bar{\delta}},(-\Delta)\bQ_{\bar{\delta}}}} \\ &\qquad+\frac{\jbrac{P_{V_{0,\bar{\delta}\delta}\cap E_N}v,(-\Delta)i\bQ_{\bar{\delta}}}}{\jbrac{P_{\leq N}\bQ_{\bar{\delta}},(-\Delta)\bQ_{\bar{\delta}}}} 
			+\frac{\jbrac{d\pare{P_{V_{0,\bar{\delta}\delta}\cap E_N}w}(\alpha,\beta),(-\Delta)i\bQ_{\bar{\delta}}}}{\jbrac{P_{\leq N}\bQ_{\bar{\delta}},(-\Delta)\bQ_{\bar{\delta}}}}.
		\end{align*}
		The first term is the dominant part. Applying Cauchy-Schwartz, (\ref{loc,size:lower bound 1}), (\ref{loc,size:lower bound 2}) and (\ref{est: H^k norm of Q}),
		\begin{align*}
			\frac{\jbrac{P_{\leq N}ie^{i\theta}\bQ_{\bar{\delta}\delta},(-\Delta)i\bQ_{\bar{\delta}}}}{\jbrac{P_{\leq N}\bQ_{\bar{\delta}},(-\Delta)\bQ_{\bar{\delta}}}} 
			&= 1 + \frac{\jbrac{P_{\leq N}\pare{ie^{i\theta}\bQ_{\bar{\delta}\delta}-i\bQ_{\bar{\delta}}},(-\Delta)i\bQ_{\bar{\delta}}}}{\jbrac{P_{\leq N}\bQ_{\bar{\delta}},(-\Delta)\bQ_{\bar{\delta}}}}\\
			&= 1+O\pare{\norm{e^{i\theta}\bQ_{\bar{\delta}\delta}-\bQ_{\bar{\delta}}}_{L^2(\D)}} + O(e^{-c\delta^{-1}}).
		\end{align*}
		Fundamental theorem of calculus gives
		\begin{align*}
			\norm{e^{i\theta}\bQ_{\bar{\delta}\delta}-\bQ_{\bar{\delta}}}_{L^2(\D)}
			&\leq |e^{i\theta}-1|\norm{\bQ_{\bar{\delta}\delta}}_{L^2(\D)} + \norm{\bQ_{\bar{\delta}\delta}-\bQ_{\bar{\delta}}}_{L^2(\D)}\\
			&\lesssim |\theta| + \int_{\bar{\delta}}^{\bar{\delta}\delta}\norm{(\partial_{\delta}\bQ)_s}_{L^2}ds\\
			&\lesssim |\theta| + |\ln \delta| \lesssim |\theta| + |\delta -1|.
		\end{align*}
		Then use orthogonality to estimate the 2nd and 3rd terms:
		\begin{align*}
			\frac{\jbrac{P_{\leq N}e^{i\theta}(\partial_{\delta}\bQ)_{\bar{\delta}\delta},(-\Delta)i\bQ_{\bar{\delta}}}}{\jbrac{P_{\leq N}\bQ_{\bar{\delta}},(-\Delta)\bQ_{\bar{\delta}}}} 
			&= \frac{\jbrac{P_{\leq N}\pare{e^{i\theta}(\partial_{\delta}\bQ)_{\bar{\delta}\delta}- \partial_{\delta}\bQ_{\bar{\delta}}},(-\Delta)i\bQ_{\bar{\delta}}}}{\jbrac{P_{\leq N}\bQ_{\bar{\delta}},(-\Delta)\bQ_{\bar{\delta}}}}\\
			&\lesssim \norm{e^{i\theta}(\partial_{\delta}\bQ)_{\bar{\delta}\delta}- \partial_{\delta}\bQ_{\bar{\delta}}}_{L^2(\D)}\\
			&\lesssim  |\theta| + \int_{\bar{\delta}}^{\bar{\delta}\delta}\norm{(\partial^2_{\delta}\bQ)_s}_{L^2}ds\\
			&\lesssim |\theta| + |\delta-1|\bar{\delta}^{-1}\delta^{-1},	
		\end{align*}
		and
		\begin{align*}
			\frac{\jbrac{P_{V_{0,\bar{\delta}\delta}\cap E_N}v,(-\Delta)i\bQ_{\bar{\delta}}}}{\jbrac{P_{\leq N}\bQ_{\bar{\delta}},(-\Delta)\bQ_{\bar{\delta}}}}
			&= \frac{\jbrac{P_{V_{0,\bar{\delta}\delta}\cap E_N}v,(-\Delta)\pare{i\bQ_{\bar{\delta}}-ie^{i\theta}\bQ_{\bar{\delta}\delta}}}}{\jbrac{P_{\leq N}\bQ_{\bar{\delta}},(-\Delta)\bQ_{\bar{\delta}}}}\\
			&\lesssim \norm{v}_{L^2}\cdot\pare{|\theta| + |\delta-1|}.
		\end{align*}
		For the last term, applying Cauchy-Schwartz,
		\begin{align*}
			\frac{\jbrac{d\pare{P_{V_{0,\bar{\delta}\delta}\cap E_N}w}(\alpha,\beta),(-\Delta)i\bQ_{\bar{\delta}}}}{\jbrac{P_{\leq N}\bQ_{\bar{\delta}},(-\Delta)\bQ_{\bar{\delta}}}} 
			&\lesssim \norm{d\pare{P_{V_{0,\bar{\delta}\delta}\cap E_N}w}(\alpha,\beta)}_{L^2(\D)}\\
			&\lesssim \pare{|\alpha|+|\beta|}\norm{w}_{L^2(\D)}.
		\end{align*}
		Thus
		\begin{align*}
			\tilde{\alpha} = \alpha + O\pare{\pare{|\theta| + |\delta-1||\delta^{-1}+1|}\cdot (\norm{v}_{L^2}+|\alpha|)+\pare{|\alpha|+|\beta|}\norm{w}_{L^2(\D)}}.
		\end{align*}
		Similarly,
		\begin{align*}
			\tilde{\beta} = \beta + O\pare{\pare{|\theta| + |\delta-1||\delta^{-1}+1|}\cdot (\norm{v}_{L^2}+|\beta|)+\pare{|\alpha|+|\beta|}\norm{w}_{L^2(\D)}}.
		\end{align*}
		Finally,
		\begin{align*}
			\tilde{v} - v 
			&= dG(\theta, \delta, w)\mathbf{v} -\tilde{\alpha} P_{\leq N}i\bQ_{\bar{\delta}} -\tilde{\beta}\bar{\delta} P_{\leq N}\partial_{\delta}\bQ_{\bar{\delta}} - v\\
			&= \alpha P_{\leq N}ie^{i\theta}\bQ_{\bar{\delta}\delta}+ \beta\bar{\delta} P_{\leq N}(\partial_{\delta}\bQ)_{\bar{\delta}\delta} +d\pare{P_{V_{0,\bar{\delta}\delta}\cap E_N}w}(\alpha,\beta)\\ &\quad+\pare{P_{V_{0,\bar{\delta}\delta}\cap E_N}v-v}-\tilde{\alpha} P_{\leq N}i\bQ_{\bar{\delta}} -\tilde{\beta}\bar{\delta} P_{\leq N}\partial_{\delta}\bQ_{\bar{\delta}}\\
			&= (\alpha - \tilde{\alpha})P_{\leq N}i\bQ_{\bar{\delta}} + \alpha P_{\leq N}(ie^{i\theta}\bQ_{\bar{\delta}\delta} -i\bQ_{\bar{\delta}}) +(\beta-\tilde{\beta})\bar{\delta}P_{\leq N}\partial_{\delta}\bQ_{\bar{\delta}}\\
			&\quad+\beta\bar{\delta}P_{\leq N}((\partial_{\delta}\bQ)_{\bar{\delta}\delta}-\partial_{\delta}\bQ_{\bar{\delta}})
			+d\pare{P_{V_{0,\bar{\delta}\delta}\cap E_N}w}(\alpha,\beta) +\pare{P_{V_{0,\bar{\delta}\delta}\cap E_N}v-v}.
		\end{align*}
		Using orthogonality, rewrite the last term as
		\begin{align*}
			P_{V_{0,\bar{\delta}\delta}\cap E_N}v-v 
			&= -\frac{\jbrac{v,P_{\leq N}(-\Delta)ie^{i\theta}\bQ_{\bar{\delta}\delta}}}{\jbrac{P_{\leq N}(-\Delta)ie^{i\theta}\bQ_{\bar{\delta}\delta},ie^{i\theta}\bQ_{\bar{\delta}\delta}}}ie^{i\theta}\bQ_{\bar{\delta}\delta}\\
			&\quad-\frac{\jbrac{v,P_{\leq N}(-\Delta)\bar{\delta}e^{i\theta}(\partial_{\delta}\bQ)_{\bar{\delta}\delta}}}{\jbrac{P_{\leq N}(-\Delta)\bar{\delta}e^{i\theta}(\partial_{\delta}\bQ)_{\bar{\delta}\delta},\bar{\delta}e^{i\theta}(\partial_{\delta}\bQ)_{\bar{\delta}\delta}}}\bar{\delta}e^{i\theta}(\partial_{\delta}\bQ)_{\bar{\delta}\delta}
			\\
			&= -\frac{\jbrac{v,P_{\leq N}(-\Delta)(ie^{i\theta}\bQ_{\bar{\delta}\delta}-i\bQ_{\bar{\delta}})}}{\jbrac{P_{\leq N}(-\Delta)ie^{i\theta}\bQ_{\bar{\delta}\delta},ie^{i\theta}\bQ_{\bar{\delta}\delta}}}ie^{i\theta}\bQ_{\bar{\delta}\delta}\\
			&\quad -\frac{\jbrac{v,P_{\leq N}(-\Delta)(e^{i\theta}(\partial_{\delta}\bQ)_{\bar{\delta}\delta}-\partial_{\delta}\bQ_{\bar{\delta}})}}{\jbrac{P_{\leq N}(-\Delta)e^{i\theta}(\partial_{\delta}\bQ)_{\bar{\delta}\delta},e^{i\theta}(\partial_{\delta}\bQ)_{\bar{\delta}\delta}}}e^{i\theta}(\partial_{\delta}\bQ)_{\bar{\delta}\delta}.
		\end{align*}
		Bound all these terms as before, using that $\bar{\delta}\ll 1$ and the estimates for $\tilde{\alpha}, \tilde{\beta}$,
		\begin{align*}
			\norm{\tilde{v} - v}_{L^2(\D)} &\lesssim |\alpha-\tilde{\alpha}| + |\alpha|\pare{|\theta|+|\delta-1|} + |\beta-\tilde{\beta}| + |\beta|\bar{\delta}\pare{|\theta|+|\delta-1|\bar{\delta}^{-1}\delta^{-1}}\\ 
			&\qquad+ \pare{|\alpha|+|\beta|}\norm{w}_{L^2} + \norm{v}_{L^2}\pare{|\theta|\bar{\delta}\delta+|\delta-1|}\\
			&\leq |\alpha-\tilde{\alpha}| + |\beta-\tilde{\beta}|  +  \pare{|\alpha|+|\beta|+\norm{v}_{L^2}}\pare{|\theta|+|\delta-1||\delta^{-1}+1|+\norm{w}_{L^2}}\\
			& \lesssim \pare{|\alpha|+|\beta|+\norm{v}_{L^2}}\pare{|\theta|+|\delta-1||\delta^{-1}+1|+\norm{w}_{L^2}}.
		\end{align*}
		To sum up,
		\begin{align*}
			\norm{dG^{-1}(0,1,0)\circ dG(\theta, \delta, 0)\mathbf{v} - \mathbf{v}}_{L^2(\D)}
			&\leq |\tilde{\alpha}-\alpha| + |\tilde{\beta}-\beta|+\norm{\tilde{v}-v}_{L^2}\\
			&\lesssim \norm{\mathbf{v}}\pare{|\theta|+|\delta-1|(\delta^{-1}+1)+\norm{w}_{L^2(\D)}}. 
		\end{align*}
	\end{proof}
	
	\begin{proof}[Proof of Theorem \ref{thm:decomposition}]
		Recall that we assume, without loss of generality, $\bar{\theta}=0$. We apply the Inverse Function Theorem \ref{thm:inverse function theorem} to $G$. By Lemma \ref{lemma: compare dG^{-1}dG with id},
		\begin{equation*}
			\kappa:=\norm{dG^{-1}(0,1,0)\circ dG(\theta, \delta, w) - Id}_{op} \leq C_0 \pare{ |\theta| + |\delta-1| +\norm{w}_{L^2}}.
		\end{equation*}	
		Define
		\begin{equation*}
			W = \brac{(\theta, \delta, w): |\theta| + |\delta-1| +\norm{w}_{L^2}<\min\brac{8\epsilon\norm{dG^{-1}(0,1,0)}, \frac{C_0^{-1}}{2}}}.
		\end{equation*}
		Then, for $(\theta,\delta,w)\in W$, $\kappa \leq \frac{1}{2}$. Since $\frac{1-\kappa}{\norm{dG^{-1}(0,1,0)}}\cdot 4\epsilon\norm{dG^{-1}(0,1,0)} \leq 2\epsilon$,
		\begin{equation*}
			\brac{u\in L^2_{rad}(\D)\cap E_N: \norm{u-P_{\leq N}\bQ_{\bar{\delta}}}_{L^2} < \epsilon} \subset G(W).
		\end{equation*}
	\end{proof}
	
	\begin{remark}\label{remark: L^2 norm of v}
		If we consider general $\bar{\theta}$, in the definition of $W$, $|\theta|$ has to be replaced by $|\theta-\bar{\theta}|$.
		Moreover, from the definition of $W$, for any decomposition $P_{\leq N}u = P_{\leq N}\pare{e^{i\theta}\bQ_{\delta}}+v$, we have $\norm{v}_{L^2}\leq  C\epsilon$. The constant $C$, by Lemma \ref{lemma: inverse of dG}, is independent of $\bar{\delta}$.
	\end{remark}
	
	\section{Quadratic part: a spectral analysis}\label{section: quadratic part}
	\subsection{Reduction to quadratic form}
	In this section, we estimate the quadratic part:
	\begin{equation*}
		\int_{V_{0,\delta}}e^{(1+\eta)B_\delta(v)}d\mu_{V_{0,\delta}}(v),
	\end{equation*}
	where
	\begin{equation*}
		B_\delta(v) = \delta^{-2}\jbrac{\bQ_{\delta},v} +\jbrac{\bQ_\delta^2, \frac{1}{2}v^2+(1+\eta)|v|^2}.
	\end{equation*}
	The strategy is to compare $B_\delta(v)$ with a simpler quadratic form. We illustrate our intuition on $H^1(\R^2)$. Using the notation in Remark \ref{remark: lagrange multiplier}, recall that $Q_\delta$ is a minimizer of $H_{\R^2}$ with constraint $M_{\R^2}(u) = \frac{1}{2}\norm{Q}_{L^2(\R^2)}^2$. A second derivative test yields
	\begin{lemma}\label{lemma: 2nd derivative test}
		For any $w \in H^1(\R^2)$ with $\jbrac{w, Q_\delta}=0$,
		\begin{equation}\label{ineq: precise 2nd derivative test}
			\frac{1}{2}\jbrac{-\Delta w,w} -\jbrac{Q_\delta^2, \frac{1}{2}w^2 + |w|^2} + \frac{\delta^{-2}}{2}\jbrac{w,w} \geq 0.
		\end{equation} 	
	\end{lemma}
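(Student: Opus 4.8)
The plan is to recognize the left-hand side of \eqref{ineq: precise 2nd derivative test} as one half of the Hessian of the Lagrangian $H_{\R^2}-\lambda M_{\R^2}$ at the point $Q_\delta$, where $\lambda=\lambda(Q_\delta)=-\delta^{-2}$ is the Lagrange multiplier identified in Remark \ref{remark: lagrange multiplier}, and then to exploit that $Q_\delta$ is a genuine constrained \emph{minimizer}, not merely a critical point. Set $\Sigma:=\{u\in H^1(\R^2):\norm{u}_{L^2(\R^2)}=\norm{Q}_{L^2(\R^2)}\}$. The sharp GNS inequality in the form \eqref{ineq: GNS Hamiltonian} gives $H_{\R^2}(u)\geq 0$ for every $u\in\Sigma$, with equality at $u=Q_\delta$; hence $Q_\delta$ is a global minimizer of $H_{\R^2}|_\Sigma$, and therefore satisfies the second-order necessary condition for a constrained minimum.

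To extract that condition I would first note that $H_{\R^2}$ is $C^2$ on $H^1(\R^2)$: the quartic term is smooth by the Sobolev embedding $H^1(\R^2)\hookrightarrow L^4(\R^2)$, and the Dirichlet term is a bounded quadratic form. Given $w\in H^1(\R^2)$ with $\jbrac{w,Q_\delta}=0$ (equivalently $dM_{\R^2}(Q_\delta)[w]=0$, i.e. $w\in T_{Q_\delta}\Sigma$), I would take the admissible curve $u(s):=a(s)Q_\delta+sw$ with $a(s)=\sqrt{1-s^2\norm{w}_{L^2(\R^2)}^2\norm{Q}_{L^2(\R^2)}^{-2}}$, which lies in $\Sigma$ and satisfies $u(0)=Q_\delta$, $u'(0)=w$. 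Differentiating $s\mapsto H_{\R^2}(u(s))$ twice at $s=0$ gives $d^2H_{\R^2}(Q_\delta)[w,w]+dH_{\R^2}(Q_\delta)[u''(0)]$, and since $M_{\R^2}(u(s))$ is constant the same computation for $M_{\R^2}$ forces $dM_{\R^2}(Q_\delta)[u''(0)]=-d^2M_{\R^2}(Q_\delta)[w,w]$. Using the critical-point relation $dH_{\R^2}(Q_\delta)=\lambda\,dM_{\R^2}(Q_\delta)$ to eliminate $u''(0)$, the second-fundamental-form contribution of $\Sigma$ cancels and one is left with
\[
    0\leq \frac{d^2}{ds^2}\Big|_{s=0}H_{\R^2}(u(s))=d^2H_{\R^2}(Q_\delta)[w,w]-\lambda\,d^2M_{\R^2}(Q_\delta)[w,w].
\]

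It then remains to evaluate the two Hessians. Since $M_{\R^2}$ is quadratic, $d^2M_{\R^2}(Q_\delta)[w,w]=\jbrac{w,w}$, which contributes $+\delta^{-2}\jbrac{w,w}$. For $H_{\R^2}$, the Dirichlet part contributes $\jbrac{-\Delta w,w}$, while expanding $|Q_\delta+sw|^2=Q_\delta^2+2sQ_\delta\Re w+s^2|w|^2$ and differentiating $\tfrac14\int_{\R^2}|Q_\delta+sw|^4$ twice yields $\int_{\R^2} Q_\delta^2\pare{3(\Re w)^2+(\Im w)^2}=2\jbrac{Q_\delta^2,\tfrac12 w^2+|w|^2}$. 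Plugging these into the displayed inequality and dividing by $2$ produces exactly \eqref{ineq: precise 2nd derivative test}.

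I expect the only genuinely delicate step to be the second-order bookkeeping: confirming that, because $dH_{\R^2}(Q_\delta)=\lambda\,dM_{\R^2}(Q_\delta)$, the Hessian of $H_{\R^2}|_\Sigma$ along a curve through $Q_\delta$ depends only on the velocity $w$, and that every $w$ with $\jbrac{w,Q_\delta}=0$ is realized as such a velocity. As a self-contained alternative that avoids curves on $\Sigma$, one may split $w=w_1+iw_2$ into real and imaginary parts; the left side of \eqref{ineq: precise 2nd derivative test} then separates as $\tfrac12\jbrac{L_+w_1,w_1}+\tfrac12\jbrac{L_-w_2,w_2}$ with $L_+:=-\Delta+\delta^{-2}-3Q_\delta^2$ and $L_-:=-\Delta+\delta^{-2}-Q_\delta^2$, and the constraint $\jbrac{w,Q_\delta}=0$ only restricts $w_1$. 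Since $L_-Q_\delta=0$ by \eqref{eqn: scaling ground state} and $Q_\delta>0$, $Q_\delta$ is the ground state of $L_-$, so $L_-\geq 0$ with no constraint needed; and $\jbrac{L_+w_1,w_1}\geq 0$ for $w_1$ orthogonal to $Q_\delta$ is the classical coercivity of the linearized operator at the $L^2$-critical ground state (Weinstein). Either route finishes the proof.
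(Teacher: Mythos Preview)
Your primary argument is correct and matches the paper's proof essentially line by line: take a curve in the mass sphere $\Sigma$ through $Q_\delta$ with velocity $w$, apply the second-derivative test, use the Lagrange-multiplier identity $dH_{\R^2}(Q_\delta)=\lambda\,dM_{\R^2}(Q_\delta)$ to eliminate the $u''(0)$ contribution, and then evaluate the two Hessians explicitly. The alternative route via the linearized operators $L_\pm$ is also valid and self-contained, but for this lemma the paper uses only the constrained-minimizer argument.
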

	\begin{proof}
		For any $w$ with $\jbrac{w, Q_\delta}=0$, define a path $u(t) = \frac{\norm{Q}_2}{\norm{Q_\delta+tw}_2}(Q_\delta + tw)$. Then $u(t)$ lies in the constraint set $\{M_{\R^2}(u) = \frac{1}{2}\norm{Q}_2^2\}$, and $u(0) = Q_\delta$, $u_t(0) = w$. Thus, $H_{\R^2}(u(t))$ reaches its minimum at $t=0$. The second derivative test yields
		\begin{equation}\label{ineq: 2nd derivative test}
			0 \leq \frac{d^2}{dt^2}H_{\R^2}(u(t))\Big|_{t=0} = \jbrac{d^2H_{\R^2}(Q_\delta)w,w}+\jbrac{dH_{\R^2}(Q_\delta), u_{tt}(0)}.
		\end{equation}
		Since $M(u(t))$ is constant, we have
		\begin{equation*}
			0 =  \frac{d^2}{dt^2}M_{\R^2}(u(t))\Big|_{t=0} = \jbrac{d^2M_{\R^2}(Q_\delta)w,w}+\jbrac{dM_{\R^2}(Q_\delta), u_{tt}(0)}.
		\end{equation*}
		Recall that the Lagrange multiplier method gives $dH_{\R^2}(Q_\delta) - \lambda dM_{\R^2}(Q_\delta) = 0$. Thus,
		\begin{equation*}
			\jbrac{dH_{\R^2}(Q_\delta), u_{tt}(0)} = \lambda \jbrac{dM_{\R^2}(Q_\delta), u_{tt}(0)} = -\lambda\jbrac{d^2M_{\R^2}(Q_\delta)w,w}.
		\end{equation*}
		The second derivative test (\ref{ineq: 2nd derivative test}) now reads as
		\begin{equation*}
			d^2H_{\R^2}(Q_\delta) - \lambda d^2M_{\R^2}(Q_\delta) \geq 0.
		\end{equation*}
		We get the desired inequality by noting
		\begin{gather*}
			\jbrac{d^2H_{\R^2}(Q_\delta)w, w} = \jbrac{-\Delta w,w} -\jbrac{Q_\delta^2, w^2 + 2|w|^2},\\
			\jbrac{d^2M_{\R^2}(Q_\delta)w, w} = \jbrac{w,w},	
		\end{gather*}
		and $\lambda = -\delta^{-2}$ (ref. Remark \ref{remark: lagrange multiplier}).
	\end{proof}
	
	The first term $\frac{1}{2}\jbrac{-\Delta w,w} = \frac{1}{2}\int |\nabla w|^2$ is contained implicitly in the formal density of $\mu_{V_{0,\delta}}$, $e^{-\frac{1}{2}\int |\nabla w|^2}dw$ (despite the additional constraint $\jbrac{w, Q_\delta}=0$). The second derivative test suggests us to compare $B_\delta(v)$ with
	\begin{equation}\label{term: reference quadratic form}
		\jbrac{\bQ_\delta^2, \frac{1}{2}w^2 + |w|^2} - \frac{\delta^{-2}}{2}\jbrac{w,w}.
	\end{equation}
	
	There are two difficulties. First, the linear term $-\delta^{-2}\jbrac{\bQ_{\delta},v}$ cannot be bounded separately. Indeed, $\jbrac{\bQ_{\delta}, v}$ is a real valued Gaussian with mean 0 and variance $\jbrac{-\Delta^{-1}\bQ_{\delta},\bQ_{\delta}} \sim \delta^2$. Therefore
	\begin{equation*}
		\E_{\mu_{V_{0,\delta}}} \sqbrac{e^{-\delta^{-2}\jbrac{\bQ_{\delta}, v}}} \gtrsim e^{c\delta^{-2}},
	\end{equation*}
	which is too large. This issue will be addressed in Lemma \ref{lemma: reduction of linear term}. Second, the derivative test (\ref{ineq: precise 2nd derivative test}) holds with the additional constraint $\jbrac{w, Q_\delta}=0$, to overcome this, we need to single out the $Q_{\delta}$ direction. Define
	\begin{equation*}
		e:=\frac{P_{V_{0,\delta}}\pare{-\Delta^{-1}\bQ_{\delta}}}{\norm{P_{V_{0,\delta}}\pare{-\Delta^{-1}\bQ_{\delta}}}_{\dot{H}^1(\D)}} = \frac{-\Delta^{-1}\bQ_{\delta}}{\norm{-\Delta^{-1}\bQ_{\delta}}_{\dot{H}^1(\D)}}+O(e^{-c\delta^{-1}}).
	\end{equation*}
	Recall $\Delta^{-1}$ is the solution map of the Poisson equation $\Delta u = f$, $f\in L^2_{rad}(\D)$, with Dirichlet boundary condition. Accordingly, let $W_\delta$ be the subspace $\dot{H}^1$-orthogonal to $span\{e\}$. We have
	\begin{equation*}
		V_{0,\delta} = W_{\delta} \oplus_{\dot{H}^1} span\{e\}.
	\end{equation*} 
	Write $v = ge +w$ for some $g\in\R$, $w\in W_\delta$. We obtain
	\begin{align*}
		B_\delta(v)
		&= \delta^{-2}\jbrac{\bQ_\delta, ge} + \jbrac{\bQ_\delta^2, (\frac{3}{2}+\eta)(ge +\Re w)^2 + (\frac{1}{2}+\eta)(\Im w)^2}\\
		&\leq  (1+3\eta)\jbrac{\bQ_\delta^2, \frac{3}{2}(\Re w)^2 + \frac{1}{2}(\Im w)^2} +\delta^{-2}\jbrac{\bQ_\delta, ge} + C_\eta \jbrac{\bQ_\delta^2, (ge)^2}.
	\end{align*}
	Compare with the reference form (\ref{term: reference quadratic form}), we expect the following.
	
	\begin{lemma}\label{lemma: reduction of linear term} Given $\eta>0$, there exists $\epsilon^*=\epsilon^*(\eta)$ small enough, such that for all $\epsilon\leq \epsilon^*$,
		\begin{align*}
			\delta^{-2}\jbrac{\bQ_\delta, ge} + C_\eta \jbrac{\bQ_\delta^2, (ge)^2} \leq -(1-\eta)\frac{\delta^{-2}}{2}\jbrac{w,w} + O(e^{-c\delta^{-1}}).
		\end{align*}
	\end{lemma}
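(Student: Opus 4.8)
The idea is that, although the linear term $\delta^{-2}\jbrac{\bQ_\delta,v}$ is dangerous on its own (as recalled just before the lemma, $\jbrac{\bQ_\delta,\cdot}$ is a Gaussian of variance $\sim\delta^2$), the mass cutoff $\norm{u}_{L^2(\D)}\le\norm{Q}_{L^2(\R^2)}$ forces it to be strongly negative, and this negativity pays both for the error term $C_\eta\jbrac{\bQ_\delta^2,(ge)^2}$ and for the claimed $-(1-\eta)\tfrac{\delta^{-2}}{2}\jbrac{w,w}$. First I would rewrite the cutoff as an inequality for $v$: since $u=\bQ_\delta+v$ and, by the exponential decay of $Q,\nabla Q$ (Lemma \ref{lemma: ground state}) together with \eqref{est: restriction of Q}, $\norm{\bQ_\delta}_{L^2(\D)}^2=\norm{Q}_{L^2(\R^2)}^2+O(e^{-c\delta^{-1}})$, expanding $\norm{\bQ_\delta+v}_{L^2(\D)}^2$ yields $\jbrac{\bQ_\delta,v}\le-\tfrac12\norm{v}_{L^2(\D)}^2+O(e^{-c\delta^{-1}})$.

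Next I would isolate the $e$-component. Because $w\in W_\delta$ is $\dot H^1$-orthogonal to $e$ and $-\Delta e=\norm{-\Delta^{-1}\bQ_\delta}_{\dot H^1(\D)}^{-1}\bQ_\delta+O(e^{-c\delta^{-1}})$, we have $\jbrac{\bQ_\delta,w}=0$, hence $\jbrac{\bQ_\delta,v}=g\alpha$ where $\alpha:=\jbrac{\bQ_\delta,e}=\norm{-\Delta^{-1}\bQ_\delta}_{\dot H^1(\D)}+O(e^{-c\delta^{-1}})>0$. Combined with the previous step this gives $g\alpha\le-\tfrac12\norm{v}_{L^2(\D)}^2+O(e^{-c\delta^{-1}})$, so in particular $g$ is nonpositive up to $O(e^{-c\delta^{-1}})$ and $|g|\alpha=|\jbrac{\bQ_\delta,v}|\le\norm{v}_{L^2}\norm{\bQ_\delta}_{L^2}\lesssim\epsilon$. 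Then I would record the relevant sizes by scaling: using $\bQ_\delta=Q_\delta+O(e^{-c\delta^{-1}})$, $Q_\delta=\delta^{-1}Q(\cdot/\delta)$, and the logarithmic profile of the two-dimensional Dirichlet Green's function, one finds $\alpha^2=\jbrac{\bQ_\delta,-\Delta^{-1}\bQ_\delta}\sim\delta^2|\ln\delta|$, $\norm{e}_{L^2(\D)}^2\sim|\ln\delta|^{-1}$, and $\jbrac{\bQ_\delta^2,e^2}\lesssim|\ln\delta|$; the decisive point is that the logarithms balance, $\jbrac{\bQ_\delta^2,e^2}/\alpha\lesssim\delta^{-1}|\ln\delta|^{1/2}$, which with $|g|\lesssim\epsilon/\alpha$ gives $C_\eta\jbrac{\bQ_\delta^2,(ge)^2}=C_\eta g^2\jbrac{\bQ_\delta^2,e^2}\lesssim C_\eta\epsilon\,(-\delta^{-2}g\alpha)$.

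Choosing $\epsilon^*=\epsilon^*(\eta)$ small enough that the implicit constant times $C_\eta\epsilon$ is $\le\eta$, the left side of the lemma is then $\le(1-\eta)\delta^{-2}g\alpha+O(e^{-c\delta^{-1}})\le-(1-\eta)\tfrac{\delta^{-2}}{2}\norm{v}_{L^2(\D)}^2+O(e^{-c\delta^{-1}})$. To finish I must replace $\norm{v}_{L^2(\D)}^2$ by $\norm{w}_{L^2(\D)}^2$; since $\norm{v}^2=g^2\norm{e}^2+2g\jbrac{e,w}_{L^2}+\norm{w}^2$, this amounts to controlling the cross term $g\jbrac{e,w}_{L^2}$, for which I would retain the slack quadratic term $\tfrac{\delta^{-2}}{2}g^2\norm{e}^2-C_\eta g^2\jbrac{\bQ_\delta^2,e^2}$ (positive for $\delta$ small, by the scale balance above), complete the square in $(g,w)$, and use the constraint $g\alpha\le-\tfrac12\norm{v}^2$ once more.

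The main obstacle, I expect, is exactly this last reduction: the decomposition $v=ge+w$ is $\dot H^1$-orthogonal but not $L^2$-orthogonal, so $\jbrac{e,w}_{L^2}$ is not negligible and a bare Cauchy--Schwarz bound is too lossy; one must exploit the geometry of $V_{0,\delta}$ (the orthogonality of $w$ to $i\bQ_\delta$, $\partial_\delta\bQ_\delta$, $e$ in $\dot H^1$) together with the $|\ln\delta|$-asymptotics to handle it, and it is the smallness of $\epsilon$ that ultimately closes the inequality. The scaling computations of $\alpha$, $\norm{e}_{L^2}$ and $\jbrac{\bQ_\delta^2,e^2}$ are routine but must be carried out precisely enough that the logarithmic factors cancel in the ratios that matter.
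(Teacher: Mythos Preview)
Your outline is the paper's argument in spirit: expand $\norm{\bQ_\delta+v}_{L^2}^2\le\norm{Q}_{L^2(\R^2)}^2$ to obtain $2\jbrac{\bQ_\delta,ge}+\norm{v}_{L^2}^2\le O(e^{-c\delta^{-1}})$, and use this negativity to pay for both the $C_\eta$-term and the $\norm{w}_{L^2}^2$-term. Two differences are worth noting. First, the paper does not compute $\jbrac{\bQ_\delta^2,e^2}$ via scaling; it simply bounds $\jbrac{\bQ_\delta^2,(ge)^2}\le\norm{\bQ_\delta}_{L^\infty}^2\norm{ge}_{L^2}^2\sim\delta^{-2}\norm{ge}_{L^2}^2$, so after multiplying through by $\delta^2$ the claim reduces to
\[
\jbrac{\bQ_\delta,ge}+C_\eta\norm{ge}_{L^2}^2+\tfrac{1-\eta}{2}\norm{w}_{L^2}^2\le O(e^{-c\delta^{-1}}).
\]
Your logarithmic asymptotics for $\alpha$, $\norm{e}_{L^2}$ and $\jbrac{\bQ_\delta^2,e^2}$ are therefore not needed; the paper works instead with the cruder relation $\norm{e}_{L^2}\sim\jbrac{\bQ_\delta,e}\sim\delta$.

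Second, the step you flag as the ``main obstacle'' is exactly where the paper is most direct, and the bare Cauchy--Schwarz you dismiss is what is used. The paper splits on the sign of $g$. If $g\ge0$, both nonnegative terms $2\jbrac{\bQ_\delta,ge}$ and $\norm{v}_{L^2}^2$ must individually be $O(e^{-c\delta^{-1}})$ and one is done. If $g<0$, Young's inequality gives
\[
\norm{v}_{L^2}^2=\norm{ge+w}_{L^2}^2\ge(1-\tfrac{\eta}{2})\norm{w}_{L^2}^2-K_\eta\norm{ge}_{L^2}^2,
\]
and the extra $K_\eta\norm{ge}_{L^2}^2$ (together with the original $C_\eta\norm{ge}_{L^2}^2$) is absorbed back into $|\jbrac{\bQ_\delta,ge}|$ by the observation $\norm{ge}_{L^2}\sim|\jbrac{\bQ_\delta,ge}|=\tfrac12\norm{v}_{L^2}^2+O(e^{-c\delta^{-1}})\le\epsilon$, whence $(K_\eta+3C_\eta)\norm{ge}_{L^2}^2\le O(\epsilon)|\jbrac{\bQ_\delta,ge}|$. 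No completion of squares, no further $\dot H^1$-geometry of $V_{0,\delta}$: the smallness of $\epsilon$ closes the loop directly. So your proposal is on the right track but stops short of the closure, which is more elementary than you anticipate.
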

	\begin{proof}
		The proof is almost identical to Lemma 6.13 in \cite{oh2021optimal}, we rephrase it here to be self-contained.
		
		Since $\norm{\bQ_{\delta}}_{L^{\infty}}\sim \delta^{-1}$, it is equivalent to show
		\begin{equation}\label{eqn:section3 lemma9}
			\jbrac{\bQ_{\delta},ge} + C_{\eta}\norm{ge}_2^2 + \frac{1-\eta}{2}\norm{w}_2^2 \lesssim e^{-c\delta^{-1}}.
		\end{equation}
		By the normal bundle decomposition\footnote{In this proof, $\norm{\cdot}_2$ is short for $\norm{\cdot}_{L^2(\D)}$.},
		\begin{align*}
			\norm{Q}_{L^2(\R^2)}^2 
			&\geq \norm{\bQ_{\delta}+v}_2^2 =\norm{\bQ_{\delta}}_2^2 + 2\jbrac{\bQ_{\delta},v}+\norm{v}_2^2\\
			&=\norm{\bQ_{\delta}}_2^2 + 2\jbrac{\bQ_{\delta},ge}+\norm{v}_2^2\\
			&=\norm{\bQ_{\delta}}_2^2 + 2\jbrac{\bQ_{\delta},ge}+\norm{w}_2^2+2\jbrac{ge,w}+\norm{ge}_2^2.
		\end{align*}
		Since $\norm{Q}_{L^2(\R^2)}^2 - \norm{\bQ_{\delta}}_2^2 = O(e^{-c\delta^{-1}})$,
		\begin{equation}\label{eqn:section3 ge v}
			2\jbrac{\bQ_{\delta},ge}+\norm{v}_2^2 = O(e^{-c\delta^{-1}}).
		\end{equation}
		
		By the definition of $e$, we have
		\begin{equation}
			\norm{e}_2 \sim \norm{\bQ_{\delta}}_{H^{-1}} \sim \delta \sim \jbrac{\bQ_{\delta}, e}.
		\end{equation}
		
		\noindent\textbf{When $g\geq 0$}, by (\ref{eqn:section3 ge v}),
		\begin{gather*}
			\norm{ge}_2 \sim \jbrac{\bQ_{\delta},ge} = O(e^{-c\delta^{-1}}),\\
			\norm{w}_2 \leq \norm{v}_2 + \norm{ge}_2 \lesssim e^{-c\delta^{-1}},
		\end{gather*}
		which implies (\ref{eqn:section3 lemma9}).
		
		\noindent\textbf{When $g<0$}, according to Remark \ref{remark: L^2 norm of v}, $\norm{v}_2 \leq \epsilon$. (\ref{eqn:section3 ge v}) implies
		\begin{align*}
			\norm{ge}_2 \sim |\jbrac{\bQ_{\delta},ge}| = \frac{1}{2}\norm{v}_2^2 + O(e^{-c\delta^{-1}}) \leq \epsilon,
		\end{align*}
		and 
		\begin{equation*}
			\norm{v}_2^2 \leq 2|\jbrac{\bQ_\delta, ge}| + O(e^{-c\delta^{-1}}).
		\end{equation*}
		On the other hand,
		\begin{gather*}
			\norm{v}_2^2 = \norm{w}_2^2+2\jbrac{ge,w}+\norm{ge}_2^2
			\geq (1-\eta/2)\norm{w}_2^2 - K_\eta\norm{ge}_2^2.		
		\end{gather*}
		Thus,
		\begin{align*}
			2|\jbrac{\bQ_{\delta},ge}| \geq \norm{v}_2^2 -  O(e^{-c\delta^{-1}}) \geq (1-\eta/2)\norm{w}_2^2 - K_\eta\norm{ge}_2^2-  O(e^{-c\delta^{-1}}),
		\end{align*}
		that is
		\begin{align*}
			(1-\eta/2)\norm{w}_2^2 + 3C_\eta\norm{ge}_2^2 
			&\leq 2|\jbrac{\bQ_{\delta},ge}| + (K_\eta + 3C_\eta)\norm{ge}_2^2+ O(e^{-c\delta^{-1}})\\
			&\leq (2+O(\epsilon))|\jbrac{\bQ_{\delta},ge}| +O(e^{-c\delta^{-1}}).
		\end{align*}
		Choose $\epsilon = \epsilon(\eta)$ small enough, we can multiply the inequality by a suitable constant less than 1, such that
		\begin{align*}
			(1-\eta)\norm{w}_2^2 + 2C_\eta\norm{ge}_2^2 \leq 2|\jbrac{\bQ_{\delta},ge}| +O(e^{-c\delta^{-1}}).
		\end{align*}
		which gives (\ref{eqn:section3 lemma9}).
	\end{proof}
	
	\subsection{Spectral analysis}\label{section: spectrum analysis}
	So far, we have shown:
	\begin{equation*}
		B_\delta(v) \leq (1+3\eta)\jbrac{\bQ_\delta^2, \frac{3}{2}(\Re w)^2 + \frac{1}{2}(\Im w)^2}-(1-\eta)\frac{\delta^{-2}}{2}\jbrac{w,w} + O(e^{-c\delta^{-1}}).
	\end{equation*}
	
	In this section, $H^1_{rad,0}(\D)$ is a Hilbert space equipped with inner product $\jbrac{\cdot, \cdot}_{\dot{H}^1} = \jbrac{(-\Delta) \cdot, \cdot}$. Denote $\Re H^1_{rad,0}$ (resp. $\Im H^1_{rad,0}$) the subspace of real (resp. pure-imaginary) valued functions in $H^1_{rad,0}$. Accordingly, define the real and imaginary part of $W_\delta \cap H^1_{rad,0}(\D)$ as
	\begin{gather*}
		W_R:=W_R(\delta)=\brac{w \in \Re H^1_{rad,0}(\D): \jbrac{w, (-\Delta)\partial_\delta \bQ_\delta} = \jbrac{w, \bQ_\delta} = 0},\\
		W_I:=W_I(\delta)=\brac{w \in \Im H^1_{rad,0}(\D): \jbrac{w, (-\Delta)\bQ_\delta} = 0}.
	\end{gather*}
	Let $P_{W_R}$ (resp. $P_{W_I}$) be the $\dot{H}^1$ orthogonal projection on $W_R$ (resp. $W_I$). Then define operators on $H_{rad,0}^1(\D)$ as
	\begin{align*}
		A_1 &= P_{W_R}(-\Delta)^{-1}\pare{-(1+5\eta)\frac{3}{2}\bQ_{\delta}^2+\frac{\delta^{-2}}{2}}P_{W_R},\\
		A_2 &= P_{W_I}(-\Delta)^{-1}\pare{-(1+5\eta)\frac{1}{2}\bQ_{\delta}^2+\frac{\delta^{-2}}{2}}P_{W_I}.
	\end{align*}
	Clearly,
	\begin{equation*}
		B_\delta(v) \leq -(1-\eta)\jbrac{A_1\Re w,\Re w}_{\dot{H}^1}-(1-\eta)\jbrac{A_2\Im w, \Im w}_{\dot{H}^1} + O(e^{-c\delta^{-1}}).
	\end{equation*}
	At the end of this section, we will show
	\begin{proposition}\label{prop: integrability of quadratic part}
		For $\eta>0$ small enough,
		\begin{equation*}
			\int_{W_\delta}\exp\pare{-(1-\eta)\jbrac{A_1\Re w,\Re w}_{\dot{H}^1}-(1-\eta)\jbrac{A_2\Im w, \Im w}_{\dot{H}^1}} d\mu_{W_\delta}(w) \lesssim e^{-c\delta^{-1}}.
		\end{equation*}
		The implicit constant is independent of $\delta$.
	\end{proposition}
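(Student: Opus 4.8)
I would first record that, since for $w\in W_R$ the projection $P_{W_R}$ acts trivially and the corrections produced by $(-\Delta)$ are orthogonal to $w$ by the very constraints defining $W_R$, the Poisson identity gives the explicit quadratic forms
\[\jbrac{A_1 w,w}_{\dot{H}^1}=\int_\D\Bigl(\tfrac{\delta^{-2}}{2}-(1+5\eta)\tfrac32\bQ_\delta^2\Bigr)|w|^2\ (w\in W_R),\qquad \jbrac{A_2 w,w}_{\dot{H}^1}=\int_\D\Bigl(\tfrac{\delta^{-2}}{2}-(1+5\eta)\tfrac12\bQ_\delta^2\Bigr)|w|^2\ (w\in W_I).\]
Because $\bQ_\delta$ is bounded, $w\mapsto\bQ_\delta w$ maps $(H^1_{rad,0}(\D),\norm{\cdot}_{\dot{H}^1})$ compactly into $L^2(\D)$ (Rellich) while $(-\Delta)^{-1}$ is bounded on $\dot{H}^1$; hence $A_1,A_2$ are compact and self-adjoint for $\jbrac{\cdot,\cdot}_{\dot{H}^1}$, with eigenvalues $\lambda_n^{(1)},\lambda_n^{(2)}\to 0$ and $\dot{H}^1$-orthonormal (smooth, by elliptic regularity) eigenbases $\{\phi_n^{(1)}\}\subset W_R$, $\{\phi_n^{(2)}\}\subset W_I$. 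Since $\mu_{W_\delta}$ is the free Gaussian on $W_\delta=W_R\oplus_{\dot{H}^1}W_I$ and $\{\phi_n^{(1)}\}\cup\{\phi_n^{(2)}\}$ is a real $\dot{H}^1$-orthonormal basis of $W_\delta$, the coordinates $g_n^{(j)}(w):=\jbrac{w,(-\Delta)\phi_n^{(j)}}$ are $\mu_{W_\delta}$-a.s.\ well defined and i.i.d.\ $\mathcal{N}(0,\tfrac12)$, and $\jbrac{A_1\Re w,\Re w}_{\dot{H}^1}=\sum_n\lambda_n^{(1)}(g_n^{(1)})^2$, $\jbrac{A_2\Im w,\Im w}_{\dot{H}^1}=\sum_n\lambda_n^{(2)}(g_n^{(2)})^2$. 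Provided each factor below is finite and positive, this yields
\[\int_{W_\delta}e^{-(1-\eta)\jbrac{A_1\Re w,\Re w}_{\dot{H}^1}-(1-\eta)\jbrac{A_2\Im w,\Im w}_{\dot{H}^1}}\,d\mu_{W_\delta}(w)=\prod_{j=1,2}\ \prod_n\bigl(1+(1-\eta)\lambda_n^{(j)}\bigr)^{-1/2}.\]

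\textbf{Step 2: positivity of the factors.} Next I would prove that, for $\eta$ small, there is $c_0>0$ independent of $\delta$ with $\lambda_n^{(1)},\lambda_n^{(2)}\ge -\tfrac12+c_0$, so that every factor is bounded by $(\tfrac12+(1-\eta)c_0)^{-1/2}$ and in particular finite and positive. Writing $\bQ_\delta=Q_\delta+O(e^{-c\delta^{-1}})$ and using the embedding $H^1_{rad,0}(\D)\hookrightarrow H^1(\R^2)$, the content here is the coercivity of the linearised operators. For the real part, the second derivative test (Lemma \ref{lemma: 2nd derivative test}) gives $\jbrac{(-\Delta+\delta^{-2}-3Q_\delta^2)w,w}\ge0$ on $\{w\perp_{L^2}Q_\delta\}$, and the additional (scaling) orthogonality built into $W_R$ removes the vanishing mode $\Lambda Q=Q+x\cdot\nabla Q$ of this form, improving it—uniformly in $\delta$—to $\jbrac{(-\Delta+\delta^{-2}-3Q_\delta^2)w,w}\gtrsim\norm{w}_{\dot{H}^1}^2+\delta^{-2}\norm{w}_{L^2}^2$; feeding this into $\jbrac{A_1w,w}_{\dot{H}^1}=\tfrac{\delta^{-2}}{2}\norm{w}_{L^2}^2-(1+5\eta)\tfrac32\int_\D Q_\delta^2|w|^2+O(e^{-c\delta^{-1}}\norm{w}_{\dot{H}^1}^2)$ produces $\jbrac{A_1 w,w}_{\dot{H}^1}\ge-(\tfrac12-c_0)\norm{w}_{\dot{H}^1}^2$ once $\eta$ is small. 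The imaginary part is treated the same way with $-\Delta+\delta^{-2}-Q_\delta^2\ge0$ (kernel $Q_\delta$) and the phase orthogonality defining $W_I$.

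\textbf{Step 3: the trace estimate.} It remains to show $\sum_n\log(1+(1-\eta)\lambda_n^{(j)})\gtrsim\delta^{-1}$ uniformly in $\delta$, which by Step 1 yields the bound $\lesssim e^{-c\delta^{-1}}$. As every $\lambda_n^{(j)}\ge-\tfrac12+c_0$, it suffices to exhibit $\gtrsim\delta^{-1}$ indices with $\lambda_n^{(j)}\gtrsim1$. Let $N:=\lfloor c_1\delta^{-1}\rfloor$ for a small absolute $c_1$ and $E_N=\mathrm{span}\{e_1,\dots,e_N\}$. From \eqref{def: eigenfunction}, \eqref{est: roots of J_0} and the Bessel expansions of Section \ref{section: Bessel functions} one has $\norm{e_n}_{L^\infty(\D)}^2\sim z_n\sim n$, hence for $w=\sum_{n\le N}a_n e_n$, $\norm{w}_{L^\infty(\D)}^2\lesssim N^2\sum|a_n|^2$, $\norm{w}_{\dot{H}^1}^2=\sum z_n^2|a_n|^2\le z_N^2\sum|a_n|^2\lesssim N^2\sum|a_n|^2$, while $\int_\D\bQ_\delta^2|w|^2=\int_{\R^2}Q(y)^2|w(\delta y)|^2\,dy+O(e^{-c\delta^{-1}})\lesssim\norm{w}_{L^\infty}^2\norm{Q}_{L^2(\R^2)}^2\lesssim N^2\sum|a_n|^2$. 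Therefore, for $c_1$ small,
\[\frac{\jbrac{A_1 w,w}_{\dot{H}^1}}{\norm{w}_{\dot{H}^1}^2}=\frac{\tfrac{\delta^{-2}}{2}\norm{w}_{L^2}^2-(1+5\eta)\tfrac32\int_\D\bQ_\delta^2|w|^2}{\norm{w}_{\dot{H}^1}^2}\ \gtrsim\ \frac{(\tfrac12-Cc_1^2)\delta^{-2}}{C N^2}\ \gtrsim\ c_1^{-2}\ \gtrsim\ 1\]
uniformly in $\delta$, and the same bound holds for $\jbrac{A_2 w,w}_{\dot{H}^1}/\norm{w}_{\dot{H}^1}^2$ with $w\in iE_N$. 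By the min–max principle, restricting to $E_N\cap W_R$ (resp.\ $iE_N\cap W_I$), each of codimension $O(1)$ in $E_N$, at least $N-O(1)\gtrsim\delta^{-1}$ of the eigenvalues $\lambda_n^{(1)}$, and of the $\lambda_n^{(2)}$, are $\gtrsim1$. Hence $\sum_n\log(1+(1-\eta)\lambda_n^{(j)})\ge\sum_{n\le N-O(1)}\log(1+c(1-\eta))\gtrsim\delta^{-1}$, so that $\prod_{j=1,2}\prod_n(1+(1-\eta)\lambda_n^{(j)})^{-1/2}\lesssim e^{-c\delta^{-1}}$ with $c>0$ independent of $\delta$, which is the claim.

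\textbf{The main obstacle} is Step 2: establishing the coercivity of $-\Delta+\delta^{-2}-3Q_\delta^2$ (and of $-\Delta+\delta^{-2}-Q_\delta^2$) on the subspaces $W_R$, $W_I$ with a constant uniform in $\delta$, and reconciling the mixed $\dot{H}^1$- and $L^2$-orthogonality conditions entering the definitions of $W_R,W_I$ with the $L^2$-orthogonality to $\ker(-\Delta+\delta^{-2}-Q_\delta^2)$ and to the scaling mode needed for the second derivative test—this is the Weinstein-type spectral analysis noted in the introduction as more delicate than in the one-dimensional problem of \cite{oh2021optimal}. A secondary technical point is the uniformity in $\delta$ of the eigenfunction bound $\norm{e_n}_{L^\infty(\D)}^2\sim z_n$ used in Step 3, which follows from the $r\to0$ and $r\to\infty$ Bessel expansions of Section \ref{section: Bessel functions} together with \eqref{est: roots of J_0}.
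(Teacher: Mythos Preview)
Your overall architecture---diagonalise the Gaussian integral against $\dot{H}^1$-eigenbases of $A_1,A_2$ and control the resulting infinite product---is the paper's, and Step~1 is essentially identical to its reduction (modulo a harmless variance convention in the Gaussian factors). Your Step~2 sketches a direct coercivity argument in place of the paper's contradiction/compactness proof of Proposition~\ref{prop: eigenvalue larger than -1/2}; both are standard routes to the same spectral gap, and you correctly flag this as the delicate point.

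The genuine gap is in Step~3. You assert: ``As every $\lambda_n^{(j)}\ge-\tfrac12+c_0$, it suffices to exhibit $\gtrsim\delta^{-1}$ indices with $\lambda_n^{(j)}\gtrsim1$.'' This implication is false. The uniform lower bound $\lambda_n\ge-\tfrac12+c_0$ controls each \emph{individual} negative factor, but there are infinitely many negative eigenvalues: for any $K$, the first $K$ radial Dirichlet eigenfunctions of $-\Delta$ on $\{r\le R_0\delta\}$ (where the potential $\tfrac{\delta^{-2}}{2}-\tfrac32(1+5\eta)\bQ_\delta^2$ is negative), extended by zero, are $\dot{H}^1$-orthogonal in $H^1_{rad,0}(\D)$ and all give strictly negative Rayleigh quotient; intersecting with $W_R$ costs at most two dimensions. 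Without a $\delta$-uniform bound on $\sum_n|\lambda_n^-|$ (equivalently, on $\sum_n\log(1+(1-\eta)\lambda_n^-)$ from below), the negative factors could in principle cancel the $e^{-c\delta^{-1}}$ gain you extract from the positive ones. The paper closes exactly this gap in Proposition~\ref{prop: asymptotic lower bound for eigenvalues}: by comparing the potential with an explicit step potential supported in $\{r\le a\delta\}$ and solving the resulting Bessel eigenvalue problem, it proves $\lambda_n^-\gtrsim -n^{-2}$ uniformly in $\delta$, whence $\sum_n|\lambda_n^-|\lesssim\sum_n n^{-2}<\infty$.

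That said, your test-space argument for the \emph{positive} half of Step~3 is correct and is a genuinely more elementary alternative to the paper's approach: where the paper proves the sharp asymptotic $\lambda_n^+\gtrsim\delta^{-2}/n^2$ via the zeros of a Bessel cross-product, you need only the crude bounds $\norm{e_n}_{L^\infty}^2\sim z_n$ and $\norm{\bQ_\delta}_{L^2}\lesssim 1$ to produce an $(N-O(1))$-dimensional subspace with Rayleigh quotient $\gtrsim 1$, which by min--max already yields $\gtrsim\delta^{-1}$ eigenvalues bounded below by a constant. This is enough for the positive contribution; you just need to supplement it with a bound on the negative trace, and the quickest way to do that is the paper's comparison with the Dirichlet problem on the small ball.
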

	
	By definition, $A_1, A_2$ are symmetric. Since $\norm{\bQ_\delta}_{\infty}\sim \delta^{-1}$, $\jbrac{A_iw,w}_{\dot{H}^1}\lesssim \delta^{-2}\norm{w}_{L^2(\D)}$. The Rellich–Kondrachov theorem then implies that $A_i$ are compact on $H^1_{rad,0}(\D)$. Since the range of $A_1$ (resp. $A_2$) is in $W_R$ (resp. $W_I$), it is also a compact operator on $W_R$ (resp. $W_I$). In particular, its spectrum consists of eigenvalues, with the only possible essential spectrum at $0$. Let $h_n\in W_R$ be a normalized eigenfunction of $A_1$ corresponding to eigenvalue $\lambda_n$. Then $\brac{h_n}$ forms an orthonormal basis of $\ker A_1$. By construction of free Gibbs measure,
	\begin{equation}\label{term: product form of the integral}
		\int e^{-(1-\eta)\jbrac{A_1 w,w}_{\dot{H}^1}}d\mu_{W_R}(w) = \prod_n \E\sqbrac{e^{-(1-\eta)\lambda_n g^2}} = \prod_n \pare{1+2(1-\eta)\lambda_n}^{-1/2},
	\end{equation}
	where $g$ is a real-valued standard Gaussian. In order to get a finite product, we first need to check $1+2(1-\eta)\lambda_n>0$.
	
	\begin{proposition}\label{prop: eigenvalue larger than -1/2}
		The smallest eigenvalue of $A_1$ (resp. $A_2$) is greater than $-\frac{1}{2}+\epsilon_0$, where $\epsilon_0 \rightarrow 0$ as $\delta \rightarrow 0$.
	\end{proposition}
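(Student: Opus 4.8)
The plan is to rescale to the mass‑critical variables so that the estimate becomes a $\delta$‑independent spectral statement on $\R^{2}$ for the operators $L_{+}=-\Delta+1-3Q^{2}$ and $L_{-}=-\Delta+1-Q^{2}$, and then to extract the constant $-\tfrac12$ from the second‑derivative test, Lemma~\ref{lemma: 2nd derivative test}. For $w\in W_{R}$ put $w(x)=\delta^{-1}\tilde w(\delta^{-1}x)$, so that $\tilde w\in H^{1}_{rad,0}(\delta^{-1}\D)\subset H^{1}_{rad}(\R^{2})$; substituting and using $\bQ_{\delta}=Q_{\delta}+O(e^{-c\delta^{-1}})$ together with the exponential decay of $Q,\nabla Q$ (Lemma~\ref{lemma: ground state}) gives
\[
\frac{\jbrac{A_{1}w,w}_{\dot H^{1}}}{\norm{w}_{\dot H^{1}}^{2}}
=\frac{\tfrac12\int\tilde w^{2}-\tfrac{3(1+5\eta)}{2}\int Q^{2}\tilde w^{2}}{\int|\nabla\tilde w|^{2}}+O(e^{-c\delta^{-1}}),
\]
while the two constraints defining $W_{R}$ become $\jbrac{\tilde w,Q}_{L^{2}}=O(e^{-c\delta^{-1}})\norm{\tilde w}_{L^{2}}$ and $\jbrac{\tilde w,\Lambda Q}_{\dot H^{1}}=O(e^{-c\delta^{-1}})\norm{\tilde w}_{\dot H^{1}}$, with $\Lambda Q=Q+x\cdot\nabla Q$ the generator of the scaling $Q\mapsto Q_{\delta}$. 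Projecting $\tilde w$ onto the exactly constrained subspace changes the quotient by $O(e^{-c\delta^{-1}})$, so it suffices to bound it for $\tilde w$ with $\jbrac{\tilde w,Q}_{L^{2}}=\jbrac{\tilde w,\Lambda Q}_{\dot H^{1}}=0$; for $A_{2}$ the same reduction replaces $3Q^{2}$ by $Q^{2}$ and keeps the single constraint $\jbrac{\tilde\psi,Q}_{\dot H^{1}}=0$.

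Writing $\tfrac12\int\tilde w^{2}-\tfrac32\int Q^{2}\tilde w^{2}=\tfrac12\jbrac{L_{+}\tilde w,\tilde w}-\tfrac12\int|\nabla\tilde w|^{2}$, the $\eta=0$ version of the above quotient is $\tfrac12\jbrac{L_{+}\tilde w,\tilde w}/\int|\nabla\tilde w|^{2}-\tfrac12$, and Lemma~\ref{lemma: 2nd derivative test} applied with scaling parameter $1$ gives $\jbrac{L_{+}\tilde w,\tilde w}\ge0$ for real $\tilde w\perp_{L^{2}}Q$, hence the baseline $-\tfrac12$. (For $A_{2}$, $L_{-}Q=0$ by \eqref{eqn: scaling ground state} and $Q>0$, so $L_{-}\ge0$.) The crucial point is that this nonnegativity is saturated only along $\Lambda Q$: differentiating \eqref{eqn: scaling ground state} along the mass‑critical scaling gives $L_{+}\Lambda Q=-2Q$, and since the radial kernel of $L_{+}$ is trivial (classically $\ker L_{+}=\operatorname{span}\{\partial_{x_{1}}Q,\partial_{x_{2}}Q\}$) and $\Lambda Q\perp_{L^{2}}Q$, the nonnegative form $\jbrac{L_{+}\cdot,\cdot}$ on $\{\tilde w\perp_{L^{2}}Q\}$ has null space exactly $\operatorname{span}\{\Lambda Q\}$, which the second constraint of $W_{R}$ removes. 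A concentration–compactness argument then upgrades this to $\jbrac{L_{+}\tilde w,\tilde w}\ge c_{0}\int|\nabla\tilde w|^{2}$ with $c_{0}>0$ on the doubly constrained subspace: a normalized minimizing sequence for the quotient is bounded in $H^{1}$ (via the Gagliardo–Nirenberg bound $\int Q^{2}\tilde w^{2}\lesssim\norm{\tilde w}_{L^{2}}\norm{\nabla\tilde w}_{L^{2}}$), its weak limit remains in the constrained subspace, $\int Q^{2}\tilde w^{2}$ is weakly continuous because $Q^{2}$ is relatively compact with respect to $-\Delta+1$, and a vanishing limiting quotient would make the limit a null vector of the form inside the constrained subspace, hence $0$, forcing $\int Q^{2}\tilde w^{2}\to0$ and a contradiction.

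With $c_{0}$ in hand, the perturbation $-\tfrac{15\eta}{2}\int Q^{2}\tilde w^{2}$, the replacement of $Q$ by $\bQ_{\delta}$, and the exponentially small constraint slack are absorbed for $\eta$ small, again using the Gagliardo–Nirenberg bound for the cross terms; the residual loss is controlled by the $\delta$‑dependent errors and tends to $0$ as $\delta\to0$, which is precisely the claimed lower bound $-\tfrac12+\epsilon_{0}$. The operator $A_{2}$ is treated identically, with $Q$ itself as the saturating direction and only the single constraint needed. I expect the concentration–compactness step to be the main obstacle: the ambient space is $\R^{2}$ (exhausted by the balls $\delta^{-1}\D$), so no compact Sobolev embedding is available, and one must use the exponential localization of $Q$ both to keep the minimizing sequence from spreading to spatial infinity or losing the potential term in the limit, and to verify that the coercivity constant survives the exponentially small deformation of the constraints as $\delta^{-1}\D\uparrow\R^{2}$.
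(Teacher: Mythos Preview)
Your approach is correct and shares the essential skeleton with the paper's proof: both rescale to the unit-scale problem on $\R^2$, invoke the second-derivative test (Lemma~\ref{lemma: 2nd derivative test}) to get nonnegativity of $L_+$ on $Q^\perp$, identify the scaling direction $\Lambda Q$ (equivalently $\partial_\delta Q|_{\delta=1}$) as the unique radial null vector of the restricted form, and use that the radial kernel of $L_+$ is trivial. The packaging differs: the paper runs a contradiction argument, passing from a sequence of operators on $\delta_n^{-1}\D$ to a limit Schr\"odinger operator $B_R(\epsilon_0,\eta)$ on $L^2_{rad}(\R^2)$ and reading off strict $L^2$-positivity of $B_R(0,0)=P\,T_R\,P$ from essential-spectrum-plus-kernel analysis, then invokes continuity of the bottom of the spectrum in $(\epsilon_0,\eta)$. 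You instead prove directly the $\dot H^1$-coercivity $\jbrac{L_+\tilde w,\tilde w}\ge c_0\norm{\nabla\tilde w}_{L^2}^2$ on the doubly constrained subspace via concentration--compactness. Your version makes the absorption of the $\epsilon_0(-\Delta)$ term immediate without appealing to operator-perturbation continuity, at the cost of the compactness step you flag; that step is not a genuine obstacle, since $H^1$-boundedness of a minimizing sequence and weak continuity of $\int Q^2\tilde w^2$ follow exactly as you outline from GNS and the exponential decay of $Q$, and a nonzero weak limit in the constrained subspace is forced (else $\jbrac{L_+\tilde w_n,\tilde w_n}\ge 1-o(1)$), yielding the contradiction with the null-space characterization.
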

	
	\begin{proof}
		Assume by contradiction that there exists a fixed $\epsilon_0>0$, and eigenfunctions $w_n\in W_{R}(\delta_n)$, such that $\delta_n\rightarrow 0$ but
		\begin{align*}
			\jbrac{A_1w_n,w_n}_{\dot{H}^1}\leq (-\frac{1}{2}+\epsilon_0)\jbrac{w_n,w_n}_{\dot{H}^1} .
		\end{align*}
		Then
		\begin{align*}
			\jbrac{\pare{-(1+5\eta)\frac{3}{2}\bQ_{\delta_n}^2+\frac{\delta_n^{-2}}{2}+\pare{\frac{1}{2}-\epsilon_0}(-\Delta)}w_n,w_n} \leq 0.
		\end{align*}
		Set $\tilde{w}_n = \delta_n w_n(\delta_n \cdot)$ and extend by 0 outside $\delta^{-1}\D$, then the inequality above implies
		\begin{align}\label{loc, ineq: Bn negative}
			\jbrac{\pare{-(1+5\eta)\frac{3}{2}\pare{Q-Q(\delta_n^{-1})}^2+\frac{1}{2}+(\frac{1}{2}-\epsilon_0)(-\Delta)}\tilde{w}_n,\tilde{w}_n}_{L^2(\R^2)}\leq 0.
		\end{align}
		Define
		\begin{equation*}
			W_{\R^2,R}(\delta): = \brac{u\in \Re L^2_{rad}\cap \dot{H}^1(\R^2): \jbrac{\mathbbm{1}_{\delta^{-1}\D}\cdot u, Q-Q(\delta^{-1})}_{L^2(\R^2)}= \jbrac{\mathbbm{1}_{\delta^{-1}\D}\cdot u,(-\Delta)\partial_{\delta}Q}_{L^2(\R^2)}=0},
		\end{equation*}
		and 
		\begin{equation*}
			B_n := P_{W_{\R^2,R}(\delta_n)}\pare{-(1+5\eta)\frac{3}{2}\pare{Q-Q(\delta_n^{-1})}^2\mathbbm{1}_{\delta^{-1}\D}+\frac{1}{2}+(\frac{1}{2}-\epsilon_0)(-\Delta)}P_{W_{\R^2,R}(\delta_n)},
		\end{equation*}
		as an operator on $L_{rad}^2(\R^2)$. Since $\pare{Q-Q(\delta_n^{-1})}^2\mathbbm{1}_{\delta^{-1}\D}$ is bounded, the Schr\"{o}dinger operator inside the parenthesis has essential spectrum on $[\frac{1}{2},\infty)$. Note that $B_n$ is a restriction of this Schr\"{o}dinger operator on a subspace, therefore the essential spectrum of $B_n$ is contained in $[\frac{1}{2},\infty)$.  Moreover,
		\begin{align*}
			\jbrac{B_n w,w}_{L^2(\R^2)}\geq -(1+5\eta)\frac{3}{2} \norm{Q-Q(\delta_n^{-1})}_{L^{\infty}}^2\jbrac{w,w}_{L^2(\R^2)}.
		\end{align*}
		$B_n$ is semi-bounded. Thus, we can let $\lambda_n$ be infimum of the spectrum. $\lambda_n$ is bounded from below. Using (\ref{loc, ineq: Bn negative}) and $\tilde{w}_n \in W_{\R^2,R}(\delta_n)$, we see that $\lambda_n\leq 0$, therefore it is an eigenvalue. Up to a subsequence, we may assume $\lambda_n\rightarrow \lambda \leq 0$. Define the limit operator
		\begin{equation*}
			B_R(\epsilon_0,\eta) := P_{W_{\R^2,R}}\pare{-(1+5\eta)\frac{3}{2}Q^2+\frac{1}{2}+(\frac{1}{2}-\epsilon_0)(-\Delta)}P_{W_{\R^2,R}},
		\end{equation*}
		with
		\begin{equation*}
			W_{\R^2,R}: = \brac{u\in \Re L^2_{rad}\cap \dot{H}^1(\R^2): \jbrac{u, Q}_{L^2(\R^2)}= \jbrac{u,(-\Delta)\partial_{\delta}Q}_{L^2(\R^2)}=0}.
		\end{equation*}
		We are going to derive a contradiction by showing that $B_R(\epsilon_0,\eta)\geq \theta>0$. By continuity, it is sufficient to prove this for $\epsilon_0=\eta=0$.
		
		The same argument for $A_2$ would require us to show
		\begin{equation*}
			B_I(0,0):=P_{W_{\R^2,I}}\pare{-\frac{1}{2}Q^2+\frac{1}{2}+\frac{1}{2}(-\Delta)}P_{W_{\R^2,I}}\geq \theta>0,
		\end{equation*}
		where
		\begin{equation*}
			W_{\R^2,I}:=\brac{u\in \Im L^2_{rad}(\R^2): \jbrac{u,(-\Delta) Q}_{L^2(\R^2)}=0}.
		\end{equation*}
		
		Using Lemma \ref{lemma: 2nd derivative test} with $\delta = 1$, for any real valued $w$, we have
		\begin{align*}
			\jbrac{\underbrace{\pare{\frac{1}{2}(1-\Delta)-\frac{3}{2}Q^2}}_{T_R}w,w}_{L^2(\R^2)} &\geq 0,\quad \text{for }\jbrac{w,Q}_{L^2(\R^2)}=0\\
			\jbrac{\underbrace{\pare{\frac{1}{2}(1-\Delta)-\frac{1}{2}Q^2}}_{T_I}w,w}_{L^2(\R^2)} &\geq 0.
		\end{align*}
		Then $B_R(0,0)$ (resp. $B_I(0,0)$) is the restriction of $T_R$ (resp. $T_I$) on $W_{\R^2,R}$ (resp. $W_{\R^2,I}$). We need to show $T_R$ (resp. $T_I$) $\geq \theta >0$ on $W_{\R^2,R}$ (resp. $W_{\R^2,I}$). By the min-max principle, the spectrum of $T_R$ (resp. $T_I$) is contained in $[0,\infty)$.
		
		Since $Q$ is a Schwartz function, $T_I$, as an operator on $L_{rad}^2(\R^2)$, has the same essential spectrum as $1-\Delta$, that is, $[1,\infty)$.
		
		The ground state equation $-\Delta Q-Q^3 + Q=0$ implies $T_IQ=0$. Using the radial variable $r$, the linear ODE 
		\[T_Iu = \frac{1}{2}(-\partial_{r}^2-\frac{1}{r}\partial_r+1-Q^2)u=0\]
		has Wronskian $W = \frac{1}{r}$. Thus, if there is any other linearly independent solution $u$, we must have, as $r \to 0$,
		\begin{equation*}
			u'(r) = \frac{1}{rQ(r)} + \frac{Q'(r)}{Q(r)}u(r) \sim \frac{1}{r} + o(1)u(r)
		\end{equation*}
		which implies
		\begin{equation*}
		    \norm{u}_{\dot{H}^1(\R^2)}^2 \gtrsim \int_0^1 \frac{1}{r} - o(1)\norm{u}_{L^2(\R)}^2=\infty.
		\end{equation*}
		So the eigenspace for the minimal eigenvalue $\lambda_1=0$ is $span\{Q\}$. We conclude that, on $W_{1,I}^0$, $T_I \geq \theta>0$. Indeed, the normal component of $Q$ to $(-\Delta) Q$, denoted as $Q_{\perp}$, has strictly smaller norm: $\norm{Q_{\perp}}_2<\norm{Q}_2$. Thus, for any $u\in W_{\R^2,I}$, the component on $Q$ direction has size
		\begin{align*}
			\frac{\jbrac{u,Q}_{L^2(\R^2)}}{\norm{Q}_2} = \frac{\jbrac{u,Q_{\perp}}_{L^2(\R^2)}}{\norm{Q}_2}\leq \norm{u}_2 \frac{\norm{Q_{\perp}}_2}{\norm{Q}_2} = c_0 \norm{u}_2, \qquad c_0\in (0,1).
		\end{align*}
		Hence, write $u = P_{Q}u + w$, where $\jbrac{w, Q}_{L^2(\R^2)}=0$, and let $\lambda_2$ be the second smallest point of the spectrum. We have
		\begin{align*}
			\jbrac{T_I u, u}_{L^2(\R^2)} = \jbrac{T_I w, w}_{L^2(\R^2)} \geq \lambda_2 \norm{w}_2^2 \geq\lambda_2 (1-c_0^2)\norm{u}_2^2.
		\end{align*}
		
		By the similar argument, it suffices to show that $P_{Q^{\perp}}T_RP_{Q^{\perp}} u =0$ for $u \perp_{L^2(\R^2)} W_{\R^2,R}$.
		\begin{lemma}\label{lemma: 0 kernel}
			$T_R$, as an operator on $L^2_{rad}(\R^2)$ has trivial kernel: $\ker T_R = \{0\}$.
		\end{lemma}
		\begin{proof}
			It is known (see for example  \cite{Frank2014GROUNDSO}) that the corresponding $L^2(\R^2)$ operator $L^+:=-\Delta+1-3Q^2$ has $\ker L^+ = \text{span}\{\partial_{x_1}Q,\partial_{x_2}Q\}$. Clearly $\ker T_R \subset \ker L^+$. If there is some $u\in \ker T_R\setminus \brac{0}$, then by normalizing, we may assume $u = \partial_v Q$, for some unit vector $v\in \R^2$. Moreover, since $Q$ is radial, we may further rotate $u$ to get $u=\partial_{x_1}Q$. Since $u$ is radial, $u(x,\cdot)$ would be even. But $Q(x,\cdot)$ is also even and therefore its derivative, $u(x,\cdot) = \partial_{x_1}Q(x,\cdot)$, has to be odd. This forces $u \equiv 0$, a contradiction.
		\end{proof}
		
		Thus $T_{R} u =0$, if and only if $u=0$. By differentiating (\ref{eqn: scaling ground state}) with respect to $\delta$, we obtain $T_R(\partial_{\delta}Q) = Q$. Combining with Lemma \ref{lemma: 0 kernel} yields that $P_{Q^{\perp}}T_RP_{Q^{\perp}} u =0$ if and only if $u \in span\{Q, \partial_{\delta}Q\}$, which is orthogonal to $W_{\R^2,R}$.
	\end{proof}
	
	Next, we will show the eigenvalues in (\ref{term: product form of the integral}) is summable.
	\begin{proposition}\label{prop: asymptotic lower bound for eigenvalues}
		List the non-zero eigenvalues of $A_1$(or $A_2$) as $\lambda^-_1\leq \lambda^-_2 \leq \cdots < 0 < \cdots \leq \lambda^+_2 \leq \lambda^+_1$. Then
		\begin{equation}\label{est: asymptotic estimate for eigenvalues}
			\lambda^+_n \gtrsim \frac{\delta^{-2}}{n^2}, \; \lambda^-_n \gtrsim -\frac{1}{n^2}.
		\end{equation}
	\end{proposition}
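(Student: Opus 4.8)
The plan is to treat $A_1$ and $A_2$ on the same footing: write $A$ for either operator, $W$ for the corresponding constraint space ($W_R$, of codimension $2$, or $W_I$, of codimension $1$, in $H^1_{rad,0}(\D)$), and $c>0$ for the constant $(1+5\eta)\tfrac32$ or $(1+5\eta)\tfrac12$. The whole argument rests on the identity, valid for $w\in W$,
\[
\jbrac{Aw,w}_{\dot H^1} \;=\; \frac{\delta^{-2}}{2}\,\norm{w}_{L^2(\D)}^2 \;-\; c\,\norm{\bQ_\delta w}_{L^2(\D)}^2,
\]
which follows from $\jbrac{(-\Delta)^{-1}f,w}_{\dot H^1}=\jbrac{f,w}_{L^2}$ and $P_Ww=w$. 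Both halves of \eqref{est: asymptotic estimate for eigenvalues} then come from the min-max principle applied to this quadratic form.

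For the positive eigenvalues I would exhibit an explicit $n$-dimensional test space on which the $\tfrac{\delta^{-2}}{2}\norm{w}_{L^2}^2$ term dominates. Fix $R_0$ large enough that $4c\,Q(R_0)^2\le\tfrac16$ (possible since $Q(r)\to0$), and let $\Psi_1,\dots,\Psi_{n+2}$ be the first $n+2$ \emph{radial} Dirichlet eigenfunctions of $-\Delta$ on the annulus $A_\delta:=\{R_0\delta<|x|<1\}$, extended by $0$; these lie in $H^1_{rad,0}(\D)$, and after the Liouville substitution $v=r^{-1/2}u$ the radial eigenvalue equation on $A_\delta$ becomes $-u''-\tfrac{1}{4r^2}u=\nu u$ with a nonpositive potential, so by min-max the $k$-th eigenvalue satisfies $\nu_k\le\bigl(k\pi/(1-R_0\delta)\bigr)^2\le 4\pi^2k^2$, uniformly in $\delta\le\delta^*$. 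On $\operatorname{span}\{\Psi_j\}$ one has $\norm{w}_{\dot H^1}^2\le 4\pi^2(n+2)^2\norm{w}_{L^2}^2$, while $\operatorname{supp}w\subset A_\delta$ and $|\bQ_\delta(r)|\le 2\delta^{-1}Q(R_0)$ there give $c\norm{\bQ_\delta w}_{L^2}^2\le \tfrac16\delta^{-2}\norm{w}_{L^2}^2$, hence $\jbrac{Aw,w}_{\dot H^1}\ge\tfrac13\delta^{-2}\norm{w}_{L^2}^2$. Intersecting $\operatorname{span}\{\Psi_j\}$ with $W$ leaves an $n$-dimensional subspace on which the Rayleigh quotient of $A$ is $\gtrsim\delta^{-2}/n^2$, so $\lambda_n^+\gtrsim\delta^{-2}/n^2$.

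For the negative eigenvalues I would instead bound the counting function $N_-(\mu):=\#\{n:\lambda_n(A)<-\mu\}$ and prove $N_-(\mu)\lesssim\mu^{-1/2}$ with a constant independent of $\delta$; combined with Proposition~\ref{prop: eigenvalue larger than -1/2} (which disposes of the finitely many $n$ with $\lambda_n^-\le-\tfrac13$, say) this is equivalent to $\lambda_n^-\gtrsim-1/n^2$. By the identity above, $\lambda_n(A)<-\mu$ holds for some $n$-dimensional $S\subset W$ exactly when $\jbrac{(-\mu\Delta+V)w,w}_{L^2}<0$ on $S$, with $V:=\tfrac{\delta^{-2}}{2}-c\,\bQ_\delta^2$; hence $N_-(\mu)$ is at most the number of negative eigenvalues of the radial Schr\"odinger operator $-\mu\Delta+V$ on $H^1_{rad,0}(\D)$ (dropping the codimension-$\le 2$ restriction only enlarges the negative subspace). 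Now $V<0$ only on a ball $B_{2\delta\rho_*}$ of radius $O(\delta)$, where $Q(\rho_*)=(2c)^{-1/2}$ (and $\rho_*<\infty$ since $Q(0)^2\ge2>\tfrac1{2c}$), and there $\norm{(V)_-}_{\infty}\le 4c\,Q(0)^2\delta^{-2}$. Using $-\mu\Delta+V\ge-\mu\Delta-(V)_-$, Neumann bracketing across $\partial B_{2\delta\rho_*}$, and the fact that the exterior piece is nonnegative, the count is at most the number of \emph{radial} Neumann eigenvalues of $-\mu\Delta$ on $B_{2\delta\rho_*}$ below $4c\,Q(0)^2\delta^{-2}$; these eigenvalues are $\mu\,(j_{1,k}/(2\delta\rho_*))^2$ with $j_{1,k}\sim k\pi$, so their number is $\lesssim \delta^{-1}\cdot(2\delta\rho_*)\cdot\mu^{-1/2}\lesssim\mu^{-1/2}$, with a $\delta$-independent constant. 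This gives $N_-(\mu)\lesssim\mu^{-1/2}$ and hence $\lambda_n^-\gtrsim-1/n^2$.

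The delicate point, and the place where the radiality is indispensable, is this last count: the well of $V$ is deep ($\sim\delta^{-2}$) and narrow ($\sim\delta$), and one must see that the \emph{rescaled} well has a fixed profile $cQ^2-\tfrac12$, so that it carries only $O(\mu^{-1/2})$ negative modes with a uniform constant — this is the one-dimensional (radial) Weyl rate, and it is precisely what yields the $n^{-2}$ decay instead of the $n^{-1}$ one would obtain from a naive two-dimensional count. The two technical inputs, the uniform bound $\nu_k\lesssim k^2$ for the annulus eigenvalues and the Neumann-bracketing count for the narrow well, are the core of the proof; the remaining steps are bookkeeping with the codimension-$\le2$ corrections and the $O(e^{-c\delta^{-1}})$ discrepancies between $\bQ_\delta$ and $Q_\delta$.
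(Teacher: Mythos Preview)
Your argument is correct and follows the same overall architecture as the paper's proof: reduce to the unprojected operator $\tilde T=(-\Delta)^{-1}\bigl(-c\,\bQ_\delta^2+\tfrac{\delta^{-2}}{2}\bigr)$ (the codimension-$\le 2$ correction being harmless by interlacing), then for the positive spectrum compare with an operator living on the annulus $\{a\delta<r<1\}$ where $\bQ_\delta$ is small, and for the negative spectrum compare with an operator living on the ball $\{r<a\delta\}$ where the potential well sits. The paper carries out both comparisons by writing down the explicit Bessel-function eigenfunctions and quoting classical asymptotics for the zeros of $J_0$ and of the cross-product $J_0(B)Y_0(Ba\delta)-Y_0(B)J_0(Ba\delta)$; you instead use the Liouville substitution $v=r^{-1/2}u$ to dominate the annulus eigenvalues by those of the free $1$D Dirichlet Laplacian, and Neumann bracketing plus a Weyl count on the ball for the negative part. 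Both routes are equally rigorous. Your version has the mild advantage of being self-contained (no need to cite zero-asymptotics of Bessel cross-products), and it makes explicit that the $n^{-2}$ decay is the one-dimensional Weyl rate coming from radiality. The paper's version is slightly more concrete in that it pins down the eigenvalues exactly rather than just bounding them. One small remark: your invocation of Proposition~\ref{prop: eigenvalue larger than -1/2} is not actually needed for \eqref{est: asymptotic estimate for eigenvalues} --- the counting bound $N_-(\mu)\lesssim\mu^{-1/2}$ for all $\mu>0$ already gives $\lambda_n^-\gtrsim -n^{-2}$ for every $n$.
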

	
	\begin{proof}
		We only prove this result for $A_1$, same argument applies for $A_2$ case.
		
		Define $\tilde{T}_R := (-\Delta)^{-1}\pare{-(1+5\eta)\frac{3}{2}\bQ_{\delta}^2+\frac{\delta^{-2}}{2}}$ as an operator on $H^1_{rad,0}(\D)$. Then $A_1 =P_{W_R}\tilde{T}_RP_{W_R}$ is a finite rank perturbation of $\tilde{T}_R$, thus the eigenvalues of $A_1$ are interlaced with eigenvalues of $\tilde{T}_R$. It is sufficient to get the same estimates (\ref{est: asymptotic estimate for eigenvalues}) for $B_R$, whose non-zero eigenvalues we list as $\bar{\lambda}^-_1 \leq \cdots < 0 < \cdots \leq \bar{\lambda}^+_1$.
		
		The strategy of the proof is to compare $B_R$ simpler Schr\"{o}dinger operators. We investigate the positive and negative eigenvalues separately.
		
		\paragraph{Positive eigenvalues:} By Lemma \ref{lemma: ground state}, we can choose some constant $a>0$ large enough such that $Q(r)\leq 1/4$ for $r\geq a$. Let
		\begin{align*}
			S_+(r) =
			\begin{bracket}
				&``-\infty", & r\leq a\delta\\
				&\frac{1}{4}\delta^{-2}, & r>a\delta
			\end{bracket},
		\end{align*}
		Then $S_+ \leq -(1+5\eta)\frac{3}{2}\bQ_{\delta}^2+\frac{\delta^{-2}}{2}$.
		Define $(-\Delta)^{-1}S_+$ as an operator on the subspace
		\begin{equation*}
			X_+:=\brac{u\in H^1_{rad,0}(\D): u(r)=0 \text{ for } r\in [0,a\delta]}.
		\end{equation*}
		Clearly, $(-\Delta)^{-1}S_+$ is a compact operator, and we list its positive eigenvalues as $0< \cdots \leq \mu_2 \leq \mu_1$. By the min-max principle,
		\begin{align*}
			-\bar{\lambda}^+_n &= \min_{\dim L = n}\max_{w\in L}\brac{\frac{\jbrac{-T_Rw,w}_{\dot{H}^1}}{\norm{w}_{\dot{H}^1}^2}}\\
			& \leq \min_{\dim L = n}\max_{w\in L}\brac{\frac{\jbrac{-S_+w,w}}{\norm{w}_{\dot{H}^1}^2}: w\in X_+}\cup\brac{\infty: w\notin X_+}\\
			& = \min_{\dim L = n, L\subset X_+}\max_{w\in L}\brac{\frac{\jbrac{-S_+w,w}}{\norm{w}_{\dot{H}^1}^2}: w\in X_+} = -\mu_n.
		\end{align*}
		
		The eigenvalue equation $(-\Delta)^{-1}S_+ f  = \mu f$ can be written as
		\begin{equation*}
			\pare{\partial_r^2+\frac{1}{r}\partial_r}f= -\frac{1}{4}\frac{\delta^{-2}}{\mu}f=, \; r\in [a\delta, 1],
		\end{equation*}
		with boundary value condition $f(a\delta) = f(1) =0$.
		
		As discussed in Section \ref{section: Bessel functions}, equation $(\partial_r^2 + \frac{1}{r}\partial_r)u = -w^2u$ has a general solution of the form $c_1J_0(wr) + c_2Y_0(wr)$. Denote $B=\sqrt{|\frac{1}{4}\frac{\delta^{-2}}{\mu}|}$. Since $\mu>0$, $B\in (0,\infty)$. The boundary value conditions yield
		\begin{align*}
			0 &= f(1) = c_1J_0(B) + c_2Y_0(B),\\
			0 &= f(a\delta) = c_1J_0(Ba\delta) + c_2Y_0(Ba\delta).
		\end{align*}
		The equations above yields the cross-product relation
		\begin{equation}\label{eqn: cross product, positve}
			J_0(B)Y_0(Ba\delta)-Y_0(B)J_0(Ba\delta) = 0.
		\end{equation}
		Let $B_n$ denotes the $n$th root of (\ref{eqn: cross product, positve}). It is known (see \cite{cochran1964remarks}) that\footnote{For more results on zeros of cross-product of Bessel function, see Section 10.21 in \cite{nist} and references wherein.} $B_n\sim \frac{n\pi}{1-a\delta}$.
		
		Since $B\sim \frac{\delta^{-1}}{\sqrt{\mu}}$, we have $\mu_n \sim \frac{\delta^{-2}(1-a\delta)^2}{n^2}$. Hence
		\begin{equation*}
			\bar{\lambda}_n^+\gtrsim \frac{\delta^{-2}}{n^2}.
		\end{equation*}
		
		\paragraph{Negative eigenvalues:} Using the same constant $a$ as in positive eigenvalue part, let
		\begin{equation*}
			S_-(r) =
			\begin{bracket}
				&-3\delta^{-2}, & r\leq 5\delta\\
				&``-\infty", & r> 5\delta
			\end{bracket}.
		\end{equation*}
		By Lemma \ref{lemma: ground state}, $\norm{Q}_{\infty} \leq \sqrt{2}$. Thu,  $S_- \leq -(1+5\eta)\frac{3}{2}\bQ_{\delta}^2+\frac{\delta^{-2}}{2}$. Then the operator $(-\Delta)^{-1}S_-$ is defined on subspace
		\begin{equation*}
			X_-:=\brac{u\in H^1_{rad}(B): u(r)=0 \text{ for } r\in [a\delta, 1]}.
		\end{equation*}
		List the negative eigenvalues of $(-\Delta)^{-1}S_-$ as $\nu_1 \leq \nu_2 \leq \cdots <0$. The min-max principle yields $\bar{\lambda}^-_n \geq \nu_n$.
		
		The eigenvalue equation $(-\Delta)^{-1}S_- f = \nu f$ can be written as
		\begin{equation*}
			\pare{\partial_r^2+\frac{1}{r}\partial_r}f= \frac{3\delta^{-2}}{\nu}f,  r\leq a\delta,
		\end{equation*}
		with boundary condition $f(a\delta) = 0$. Since $f\in H^1_{rad,0}(\D)$, in radial variable $r$, we impose initial conditions $f'(0) = 0$ and normalized with $f(0) = 1$.
		
		Denote $A = \sqrt{|\frac{3\delta^{-2}}{\nu}|}$. Since $\nu<0$, $A\in (0,\infty)$, the initial condition $f'(0) = 0, f(0) = 1$ implies
		\begin{equation*}
			f(r) = J_0(Ar), \; r\leq a\delta.
		\end{equation*}
		Using $f(a\delta) = 0$, we obtain
		\begin{equation}\label{eqn: cross product, negative}
			f(a\delta) = J_0(Aa\delta) = 0.
		\end{equation}
		Let $A_n$ be the $n$th root of (\ref{eqn: cross product, negative}). By Theorem 7.2.1 in \cite{beals2010special}, $A_n\sim \frac{n\pi}{a\delta}$. Since $A\sim \frac{\delta^{-1}}{\sqrt{|\nu|}}$, we have $\nu \sim -\frac{1}{n^2}$. Hence:
		\begin{equation*}
			\bar{\lambda}_n^-\lesssim -\frac{1}{n^2}.
		\end{equation*}
	\end{proof}
	
	\begin{proof}[Proof of Proposition \ref{prop: integrability of quadratic part}]
		For $j=1,2$, list the eigenvalues of $A_j$ as $\lambda_{j,1}^- \leq \lambda_{j,2}^- \leq \cdots < 0 \leq \cdots \lambda_{j,2}^+ \leq \lambda_{j,1}^+$, and denote $h_{j,n}^{\pm}$ as the $\dot{H}$-normalized eigenfunction corresponding to eigenvalue $\lambda_{j,n}^{\pm}$. Since $W_\delta\cap H_{rad,0}^1(\D) = W_R \oplus W_I$ is a dense subspace, $\brac{h_{1,n}^{\pm}, ih_{2,n}^{\pm}}$ forms an orthogonal basis of $W_\delta$. The free Gaussian measure $\mu_{W_\delta}$ is then the law of random function
		\begin{equation*}
			w = \sum_{\nu\in\brac{+,-}}\sum_{j\in\brac{1,2}}\sum_n g_{j,n}^{\nu}h_{j,n}^{\nu},
		\end{equation*}
		where $g_{j,n}^{\nu}$ are independent real-valued standard Gaussian. Therefore,
		\begin{equation}\label{loc, term: quadratic integral}
			\begin{aligned}
				\int_{W_\delta}&\exp\pare{-(1-\eta)\jbrac{A_1\Re w,\Re w}_{\dot{H}^1}-(1-\eta)\jbrac{A_2\Im w, \Im w}_{\dot{H}^1}} d\mu_{W_\delta}(w)\\
				& =\prod_{\nu\in\brac{+,-}}\prod_{j\in\brac{1,2}}\prod_n \E\brac{e^{-(1-\eta)\lambda_{j,n}^{\nu}(g_{j,n}^{\nu})^2}}\\
				& = \prod_{\nu\in\brac{+,-}}\prod_{j\in\brac{1,2}}\prod_n \pare{1+2(1-\eta)\lambda_{j,n}^{\nu}}^{-\frac{1}{2}}.
			\end{aligned}
		\end{equation}
		Using Proposition \ref{prop: eigenvalue larger than -1/2} and Proposition \ref{prop: asymptotic lower bound for eigenvalues}, we get
		\begin{align*}
			\ln\pare{\prod_n \pare{1+2(1-\eta)\lambda_{j,n}^{+}}^{-\frac{1}{2}}}
			&= \sum_n -\frac{1}{2}\ln\pare{1+2(1-\eta)\lambda_{j,n}^{+}} \\
			&\lesssim -(1-\eta) \sum_n \lambda_{j,n}^{+}\\
			&\lesssim -(1-\eta) \sum_{n\geq \delta^{-1}}\frac{\delta^{-2}}{n^2} \leq -c_\eta \delta^{-1},
		\end{align*}
		whilst
		\begin{align*}
			\ln\pare{\prod_n \pare{1+2(1-\eta)\lambda_{j,n}^{-}}^{-\frac{1}{2}}}
			\lesssim (1-\eta) \sum_n\frac{1}{n^2} \leq C_\eta.
		\end{align*}
		Hence (\ref{loc, term: quadratic integral}) is $O_\eta(e^{-c\delta^{-1}})$.
	\end{proof}
	
	\section{Proof of the main result}
	We now summarize what has been proved to conclude Theorem \ref{thm: main result}. Corollary \ref{cor: integrability near soliton manifold} shows
	\begin{equation*}
		\E_\mu\sqbrac{e^{\frac{1}{4}\int_{\D} |u|^4}, U_{\epsilon}(0,\delta^*) ^C}<\infty.
	\end{equation*}
	Applying Lemma \ref{lemma: integrability for higher order term} and Proposition \ref{prop: integrability of quadratic part}, Proposition \ref{prop: change of variable} yields
	\begin{align*}
		\E_\mu\sqbrac{e^{\frac{1}{4}\int_{\D} |u|^4}, U_{\epsilon}(0,\delta^*)} \lesssim \int_0^{\delta^*}e^{-c \delta^{-1}}\delta^{-5}d\delta <\infty.
	\end{align*}
	In summary, we have proved normalizability of Gibbs measure $\rho$.
	
	\printbibliography
\end{document}